\documentclass[10pt]{article}
\usepackage{latexsym}
\usepackage[top=1in, bottom=1in, left=1in, right=1in]{geometry}
\usepackage{adjustbox}
\usepackage{amssymb}
\usepackage{graphicx} 
\usepackage{color} 
\usepackage{amsmath, amsthm, amssymb}
\usepackage{enumerate}
\usepackage{float}
\usepackage{url}
\usepackage{stackrel}
\usepackage{mathrsfs,dsfont}
\usepackage[labelfont=bf]{caption}
\usepackage{subcaption}
\usepackage{wrapfig}
\usepackage{bbm}
\usepackage{bm}
\usepackage{epigraph}
\usepackage{physics}
\usepackage[colorinlistoftodos, shadow]{todonotes} 
\usepackage{multirow}
\usepackage{hyperref}
\usepackage{soul}
\usepackage{pdflscape}
\usepackage{tikz}
\usetikzlibrary{shapes,arrows,automata,graphs,decorations.pathmorphing,decorations.markings}
\usepackage{tikz-cd}
\usepackage{fancybox}
\usepackage{booktabs}
\usepackage{colortbl} 
\usepackage[table]{xcolor} 
\usepackage{nicematrix}
\usepackage{algorithm}
\usepackage{algpseudocode}
\definecolor{rowtint}{gray}{0.96}
\definecolor{tablegrey}{gray}{0.95}
\algrenewcommand\algorithmicrequire{\textbf{Input:}}
\algrenewcommand\algorithmicensure{\textbf{Output:}}
\usepackage[title]{appendix}



\usepackage{mdwlist}
\usepackage{pifont}

\newtheorem{theorem}{Theorem}[section]
\newtheorem{corollary}[theorem]{Corollary}

\newtheorem{lemma}[theorem]{Lemma}
\newtheorem{example}[theorem]{Example}

\theoremstyle{definition}

\theoremstyle{definition}
\newtheorem{definition}[theorem]{Definition}
\theoremstyle{definition}

\newcommand{\been}{\begin{enumerate}}
\newcommand{\enen}{\end{enumerate}}
\newcommand{\beit}{\begin{itemize}}
\newcommand{\enit}{\end{itemize}}

\def\cre
\def\cgn{\color{olive}}
\def\cbl
\def\cma
\def\ccy{\color{cyan}}

\usepackage{tikz}    
\usetikzlibrary{shapes,automata,positioning,arrows,fit}
\usepackage{mathtools}  
\usetikzlibrary{snakes}

\usepackage{caption}
\usepackage{tikz-cd}
\usetikzlibrary{babel}

\usepackage{tkz-euclide}

\usepackage{tikz-3dplot}

\usepackage{relsize}

 \tikzset{every node/.style={auto}}
 \tikzset{every state/.style={rectangle, minimum size=0pt, draw=none, font=\Large}}
  \tikzset{bend angle=7}
  
  \usepackage{array}
  
  \usepackage{chemfig}

\usepackage{authblk}

\makeatletter
\newcommand{\xrightleftarrows}[2]{
  \mathrel{\mathop{
    \vcenter{\offinterlineskip\m@th
      \ialign{\hfil##\hfil\cr
        \hphantom{$\scriptstyle\mspace{8mu}{#1}\mspace{8mu}$}\cr
        \rightarrowfill\cr
        \vrule height0pt width 2em\cr
        \leftarrowfill\cr
        \hphantom{$\scriptstyle\mspace{8mu}{#2}\mspace{8mu}$}\cr
        \noalign{\kern-0.3ex}
      }
    }
  }\limits^{#1}_{#2}}
}
\newcommand{\xleftrightarrows}[2]{
  \mathrel{\mathop{
    \vcenter{\offinterlineskip\m@th
      \ialign{\hfil##\hfil\cr
        \hphantom{$\scriptstyle\mspace{8mu}{#1}\mspace{8mu}$}\cr
        \leftarrowfill\cr
        \vrule height0pt width 2em\cr
        \rightarrowfill\cr
        \hphantom{$\scriptstyle\mspace{8mu}{#2}\mspace{8mu}$}\cr
        \noalign{\kern-0.3ex}
      }
    }
  }\limits^{#1}_{#2}}
}
\makeatother

\title{Autocatalysis and Boundary Stability in Chemical Reaction Systems}
\author{Matthew D. Johnston\thanks{Department of Mathematics and Computer Science, Lawrence Technological University, 21000 W 10 Mile Rd, Southfield, MI 48075.}\ \thanks{Corresponding author: \texttt{mjohnsto1@ltu.edu}} \; and Pol Jarne Cupons\textsuperscript{$\ast$}}

\begin{document}
\maketitle

\begin{abstract}
The next-generation matrix method is a powerful tool for computing the basic reproduction number in compartmental mathematical models of infectious diseases. The method has been recently extended to mathematical biochemistry, where it can be used to establish parameter regions of stability and instability for boundary steady states. Several significant challenges in its application remain, however, particularly around establishing conditions under which the method is mathematically valid, computationally tractable, and biologically meaningful in the biochemical setting. In this paper, we address these challenges by shifting the interpretation from new infections in the epidemiological setting to autocatalysis in the biochemical setting. We introduce a graph, called the ALT-graph (autocatalysis-leak-transition graph), which decomposes the contribution of each reaction to the Jacobian as autocatalytic, leak, or transition edges. We then present a systematic algorithm for splitting the ALT-graph, which is guaranteed to produce a valid reproduction number, $\rho(FV^{-1})$, while also decreasing computational complexity by lowering the rank of $FV^{-1}$. We apply the method to models of both biochemical reaction networks and infectious disease spread.
\end{abstract}


\section{Introduction}
\label{sec:introduction}

Boundary steady states are critical determinants in dynamical models from mathematical biology. In mathematical epidemiology, they correspond to disease-free states, and the \emph{stability} of such states is highly desired since this often corresponds to the long-term eradication of disease. In mathematical biochemistry, by contrast, they correspond to states of extinction of metabolites, and the \emph{instability} of such states is highly desired since this corresponds to the long-term survival of the biochemical species.  

Despite the common interest in whether species may tend toward zero, however, the approaches developed for boundary behavior have been largely domain-specific \cite{avram2024advancing,johnston2026boundary}. In mathematical epidemiology, the bifurcation threshold between stability and instability is governed by the parameter-dependent \emph{basic reproduction number}, which is the number of secondary infections produced by a typical primary infection in a fully susceptible population \cite{diekmann1990definition}. Van den Driessche and Watmough \cite{van2002reproduction,van2008further} established the widely used next-generation matrix (NGM) method for computing the basic reproduction number in SIR-type infectious disease models. This method has been applied to a broad range of epidemiological models, including multistrain models \cite{lazebnik2022generic}, vector-borne disease models \cite{van2017reproduction,van2017basic}, and vaccination models \cite{ehrhardt2019sir,johnston2025effect}.  In mathematical biochemistry, by contrast, boundary stability has typically been studied as a parameter-free structural property with the goal of establishing structural motifs that guarantee persistence of all species. Work has focused on network properties such as weak reversibility \cite{horn1972general,feinberg1972chemical,horn1972necessary}, endotacticity \cite{craciun2013persistence,gopalkrishnan2014geometric}, siphons \cite{angeli2007petri,shiu2010siphons}, and dynamic nonemptiability \cite{angeli2007petri,johnston2011weak}, among other structural properties \cite{leenheer2007monotone,angeli2010graph,siegel2011stratum,ali2020computational}.

Recent work has extended the NGM method to the study of boundary steady state stability in biochemical applications \cite{avram2024advancing,johnston2026boundary}. To contextualize our results, we recall the framework for the next-generation matrix method for establishing boundary steady state stability \cite{van2002reproduction,van2008further}. For the system $c'(t) = f(c)$, we start by reordering the species $c = (x, y)$, where $x$ are the species that vanish at the boundary steady state $c^* = (x^*, y^*)$. We then divide $x' = f(x, y)$ into $f(x, y) = \mathcal{F}(x, y) - \mathcal{V}(x, y)$ so that the Jacobian of the system on the vanishing species at the boundary steady state is given by $J = F-V$ where $J = D_xf(x^*,y^*) \in \mathbb{R}^{m \times m}$, $F = D_x\mathcal{F}(x^*,y^*) \in \mathbb{R}^{m \times m}$, and $V = D_x\mathcal{V}(x^*,y^*) \in \mathbb{R}^{m \times m}$. If $F$ is a nonzero nonnegative matrix and $V$ is a $Z$-matrix (i.e., a matrix with nonpositive off-diagonal entries) with nonnegative inverse $V^{-1}$, all eigenvalues of 
$J$ have strictly negative real part if and only if $\rho(FV^{-1}) < 1$, where $\rho(\cdot)$ is the spectral radius. Consequently, the bifurcation threshold for the boundary steady state is $\rho(FV^{-1}) = 1$, directly analogous to the epidemiological threshold $\mathscr{R}_0=1$. 

To guarantee the required mathematical conditions on $F$ and $V$ in mathematical epidemiology, it is sufficient to choose $\mathcal{F}$ to contain the ``new infections'' (see (A1)-(A5) of \cite{van2002reproduction}). Furthermore, the matrices $F$ and $V^{-1}$ can be interpreted biologically as the ``force of infection'' and ``dwell time'' matrices, respectively, so that the product $FV^{-1}$ represents the combined effect of the infectivity rate and time spent infectious. In mathematical biochemistry, however, there is no notion of ``new infections'' and consequently no natural criterion for obtaining the splitting $J=F-V$.  In \cite{johnston2026boundary}, the authors presented a heuristic for performing the splitting; however, several significant challenges remain:
\begin{enumerate}
\item How do we determine a splitting which is guaranteed to produce a valid NGM?
\item How do we determine a valid splitting which minimizes the computational complexity of the NGM?
\item How do we interpret the components of the splitting ($F$ and $V$) in biochemical, rather than epidemiological, terms? Specifically, what is the analogous notion in biochemistry to ``new infections?''
\end{enumerate}

In this paper, we propose autocatalysis as the biochemical analogue of new infections in mathematical epidemiology and therefore establish a critical first step in performing the Jacobian splitting $J = F - V$ for biochemical models. We connect this foundational work to the study of boundary steady state stability by introducing the autocatalysis-leak-transition graph, or ALT-graph (Definition \ref{def:alt-graph}). The ALT-graph identifies the sources of autocatalysis in the underlying chemical reaction network and provides explicit criteria for performing the splitting $J = F - V$ required by the next-generation matrix method. In this way, the method allows us to interpret $F$ as a ``force of autocatalysis'' matrix and $V^{-1}$ as a ``dwell time'' matrix, where ``force of infection'' in epidemiology can be seen as a special case of the ``force of autocatalysis.'' We demonstrate how this method can be used to simplify computational complexity of the reproduction number by guaranteeing low-rank matrices $F$ and $FV^{-1}$. 

To illustrate how the method works, consider the following chemical reaction network corresponding to a single-site futile cycle with a bifunctional enzyme \cite{shinar2009robustness,shinar2010structural}:
\begin{equation}
\label{eq:bifunctional} \left\{ \; \;
\begin{tikzcd}[ampersand replacement=\&]
            S_0 + E \arrow[r,yshift=0.5ex,"R_1"] \& C_1 \arrow[l,yshift=-0.5ex,"R_2"] \arrow[r,"R_3"] \& S_1 + E \\
            S_1 + C_1 \arrow[r,yshift=0.5ex,"R_4"] \& C_2 \arrow[l,yshift=-0.5ex,"R_5"] \arrow[r,"R_6"] \& S_0 + C_1
            \end{tikzcd} \right.
\end{equation}
Under general assumptions on the kinetic rate functions $R_k(c)$, the chemical reaction system corresponding to \eqref{eq:bifunctional} has a boundary steady state $(S_0,S_1,E,C_1,C_2) = (0,S_{tot},E_{tot},0,0)$ where $S_{tot} > 0$ and $E_{tot} > 0$ are conserved total amounts of substrate and enzyme, respectively. In order to determine a splitting $J = F - V$ which produces a valid next-generation matrix $FV^{-1}$, we relate \eqref{eq:bifunctional} to the following ALT-graphs, which embed the network's six reactions into a graph structure on the vanishing species set $\{ S_0, C_1, C_2\}$:
\begin{equation}
\label{eq:ex12}
\scriptsize
\begin{tikzcd}[ampersand replacement=\&,execute at end picture={
        \draw[black, thin] 
            ([shift={(-5pt,-5pt)}]current bounding box.south west) 
            rectangle 
            ([shift={(5pt,5pt)}]current bounding box.north east);
            \node[above] at ([shift={(0,5pt)}]current bounding box.north) {ALT-graph of $J$};
    }]
            S_0 \arrow[rr,yshift=0.5ex,"R_1"] \& \& C_1 \arrow[ll,yshift=-0.5ex,"R_2"] \arrow[ddl,xshift=-0.4ex,yshift=0.4ex,"R_4"']\arrow[rr,"R_3"] \& \& 0 \\ \\
              \& C_2 \arrow[uur,xshift=0.4ex,yshift=-0.4ex,"{\{R_5,R_6\}}"'] \arrow[uul,"R_6"] \arrow[loop right,"R_6"] \& \& \& 
            \end{tikzcd}
=
\begin{tikzcd}[ampersand replacement=\&,execute at end picture={
        \draw[black, thin] 
            ([shift={(-5pt,-5pt)}]current bounding box.south west) 
            rectangle 
            ([shift={(5pt,5pt)}]current bounding box.north east);
            \node[above] at ([shift={(0,5pt)}]current bounding box.north) {ALT-graph of $F$};
    }]
            S_0 \arrow[rr,white,yshift=0.5ex,"R_1"] \& \& C_2 \& \& 0 \\ \\
              \& C_2 \arrow[uur,"R_6"'] \arrow[loop right,"R_6"] \& \& \& 
            \end{tikzcd}
\; \oplus \;
            \begin{tikzcd}[ampersand replacement=\&,execute at end picture={
        \draw[black, thin] 
            ([shift={(-5pt,-5pt)}]current bounding box.south west) 
            rectangle 
            ([shift={(5pt,5pt)}]current bounding box.north east);
            \node[above] at ([shift={(0,5pt)}]current bounding box.north) {ALT-graph of $-V$};
    }
]
            S_0 \arrow[rr,yshift=0.5ex,"R_1"] \& \& C_1 \arrow[ll,yshift=-0.5ex,"R_2"] \arrow[ddl,xshift=-0.4ex,yshift=0.4ex,"R_4"']\arrow[rr,"R_3"] \& \& 0 \\ \\
              \& C_2 \arrow[uur,xshift=0.4ex,yshift=-0.4ex,"{R_5}"'] \arrow[uul,"R_6"]  \& \& \& 
            \end{tikzcd}
\end{equation}

An ALT-graph contains autocatalytic edges (A; loops), leaks (L; edges to zero), and transitions (T; edges between nonzero nodes). In order to compute a valid next-generation matrix, we split the ALT-graph in such a way that the ALT-graph of $F$ corresponds to an autocatalytic decomposition (Definition \ref{def:splitting}), the ALT-graph of $-V$ contains no critical autocatalytic edges or critical strongly connected components (Definition \ref{def:autocatalytic}), and there is a directed path from every vertex to $0$ in the ALT-graph of $-V$ (Theorem \ref{thm:1}). The ALT-graphs in the splitting \eqref{eq:ex12} correspond to the following ALT-matrices, which give the Jacobian decomposition $J = F - V$:
\[ \scriptsize
\begin{bmatrix}
-\partial_1 R_1 & \partial_2 R_2 & \partial_3 R_6 \\
\partial_1 R_1 & -\partial_2 R_2 - \partial_2 R_3 -\partial_2 R_4 & \partial_3 R_5+\partial_3 R_6 \\
0 & \partial_2 R_4 & -\partial_3 R_5-\partial_3 R_6
\end{bmatrix} = \begin{bmatrix} 0 & 0 & 0\\
0 & 0 & \partial_3 R_6 \\
0 & 0 & 0
\end{bmatrix} - \begin{bmatrix}
\partial_1 R_1 & -\partial_2 R_2 & -\partial_3 R_6 \\
-\partial_1 R_1 & \partial_2 R_2 + \partial_2 R_3 + \partial_2 R_4 & -\partial_3 R_5 \\
0 & -\partial_2 R_4 & \partial_3 R_5+\partial_3 R_6
\end{bmatrix}
\]
where $\partial_i R_k := \frac{\partial R_{k}(x^*,y^*)}{\partial x_i}$. Notice that $F$ is a nonzero nonnegative matrix and $V$ is an irreducible $Z$-matrix with each column sum nonnegative and at least one strictly positive. It follows from our earlier result Theorem 2 of \cite{johnston2026boundary} that this splitting produces a valid reproduction number $\rho(FV^{-1})$.

This paper is organized as follows: In Section \ref{sec:background}, we introduce the background definitions relevant to chemical reaction networks, chemical reaction systems, siphons, the next-generation matrix method, and multiset-valued graphs. In Section \ref{sec:results}, we introduce the ALT-graph (Definition \ref{def:alt-graph}) and establish the main theoretical results (Theorems \ref{thm:jacobian} and \ref{thm:1}) and algorithmic methods (Algorithm \ref{alg:1}). In Section \ref{sec:examples}, we present illustrative examples from mathematical biochemistry (e.g., EnvZ-OmpR signaling system, single- and two-site bifunctional enzyme models) and mathematical epidemiology (e.g., vector-borne disease models). In Section \ref{sec:conclusions}, we summarize our results and outline directions for future work. In the Appendices, we present the proofs of major results and technical details underlying the examples.

\section{Background}
\label{sec:background}

In this section, we present the background results and terminology required for the development of the ALT-graph of a chemical reaction network. Specifically, we introduce chemical reaction networks, chemical reaction systems, siphons, the next-generation matrix method, and multiset-valued graphs. We use the network \eqref{eq:bifunctional} as a running example.

\subsection{Chemical Reaction Networks}

We introduce the following foundational definitions \cite{horn1972general,feinberg2019foundations}.

\begin{definition}
\label{def:crn}
A \emph{chemical reaction network} $(\mathcal{S},\mathcal{R})$ consists of a \emph{species set} $\mathcal{S} = \{ S_1, \ldots, S_n \}$ and a \emph{reaction set} $\mathcal{R} = \{ R_1, \ldots, R_r \}$ where each reaction has the form
\[
R_{k}: \sum_{i=1}^n \alpha_{ik} S_i \longrightarrow \sum_{i=1}^n \beta_{ik} S_i
\]
where $\alpha_{ik}, \beta_{ik} \in \mathbb{R}_{\geq 0}$ are the \emph{stoichiometric coefficients} of the network. The \emph{stoichiometric matrix} $\Gamma \in \mathbb{R}^{n \times r}$ of a network is given by $\Gamma_{ik} = \beta_{ik} - \alpha_{ik}$. The \emph{stoichiometric subspace} is given by $S = \mbox{im}(\Gamma)$ and the conservation laws are given by $W = \{ w \in \mathbb{R}^n \mid w^T \Gamma = 0^T \}$. The \emph{support} of a conservation law $w \in W$ is given by $\mbox{supp}(w) = \{ S_i \in \mathcal{S} \mid w_i \not= 0 \}$. A chemical reaction network $(\mathcal{S},\mathcal{R})$ is furthermore said to be \emph{integer-valued} if all stoichiometric coefficients are integer-valued, i.e., $\alpha_{ik}, \beta_{ik} \in \mathbb{Z}_{\geq 0}$, and \emph{binary} if the source and target stoichiometric sums satisfy $\sum_{i=1}^n \alpha_{ik} \leq 2$ and $\sum_{i=1}^n \beta_{ik} \leq 2$ for every $R_k \in \mathcal{R}$.
\end{definition}

For simplicity, we restrict our consideration to chemical reaction networks $(\mathcal{S},\mathcal{R})$ which are integer-valued and binary. This implies that we only allow reactions with source and target complexes of the form $0$, $S_i$, $2S_i$, and $S_i + S_j$. As most naturally occurring elementary chemical reactions are assumed to be at most binary, this assumption does not significantly limit applicability.

\begin{definition}
\label{def:crs}
Given a chemical reaction network $(\mathcal{S},\mathcal{R})$ and a kinetic rate function $R: \mathbb{R}_{\geq 0}^n \to \mathbb{R}_{\geq 0}^r$, the corresponding \emph{chemical reaction system} is given by
\begin{equation}
\label{eq:crs}
\frac{dc}{dt} = \Gamma R(c)
\end{equation}
with concentration vector $c = (c_1, \ldots, c_n)^T \in \mathbb{R}_{\geq 0}^n$ and reaction rate vector $R(c) = (R_1(c), \ldots, R_r(c))^T \in \mathbb{R}_{\geq 0}^r$.
\end{definition}
\noindent From \eqref{eq:crs}, we have $\frac{dc}{dt} \in \mbox{im}(\Gamma) = S$. Consequently, trajectories of \eqref{eq:crs} are restricted to \emph{stoichiometric compatibility classes} $\mathsf{C}_{c_0} = (c_0 + S) \cap \mathbb{R}_{\geq 0}^n$ where $c(0)=c_0$ in the initial condition of \eqref{eq:crs}.

We make the following \emph{kinetic assumptions} on the components $R_k(c)$ of the reaction rate vector $R(c) = (R_1(c), \ldots, R_r(c))^T \in \mathbb{R}_{\geq 0}^r$, where $\delta_{ij} \in \{0, 1 \}$ for $i,j = 1, \ldots, n,$ is the Kronecker delta:
\begin{enumerate}
\item[] \textbf{(K1)} $R_k(c)$ is $\mathcal{C}^1$ on $\mathbb{R}_{\geq 0}^n$;
\item[] \textbf{(K2)} $R_k(c) \geq 0$ with $R_k(c) > 0$ if and only if $c_i > 0$ for all $i$ such that $\alpha_{ik} > 0$;
\item[] \textbf{(K3)} $\displaystyle \frac{\partial R_k(c)}{\partial c_i} = 0$ if $\alpha_{ik} = 0$;
\item[] \textbf{(K4)} $\displaystyle \frac{\partial R_k(c)}{\partial c_i} > 0$ if $\alpha_{ik} > 0$ and $c_j > 0$ for all $j$ such that $\alpha_{jk} - \delta_{ij} > 0$; and
\item[] \textbf{(K5)} $\displaystyle \frac{\partial R_k(c)}{\partial c_i} = 0$ if $\alpha_{ik} > 0$ but $c_j = 0$ for some $j$ such that $\alpha_{jk} - \delta_{ij} > 0$.
\end{enumerate}
We interpret these requirements as follows: \textbf{(K1)} reaction rates increase and decrease smoothly; \textbf{(K2)} reaction rates are nonnegative and positive if and only if all reactants are present;  \textbf{(K3)} reaction rates are insensitive to changes in nonreactants; \textbf{(K4)} reaction rates are strictly increasing in a given reactant if either: (a) all of its reactants are present, or (b) that reactant has multiplicity one and the rest of the reactants are present; \textbf{(K5)} reaction rates are insensitive to changes in absent reactants if either: (a) it has a multiplicity greater than one, or (b) there is another reactant which is absent.  

Notably, \textbf{(K1)}--\textbf{(K5)} are satisfied by the commonly used mass-action kinetics \cite{guldberg1867etudes}, the rate of each reaction is proportional to the product of the reactant concentrations \cite{guldberg1867etudes}. That is, reaction $R_k$ is assigned the rate $\kappa_k c_1^{\alpha_{1k}} \cdots c_n^{\alpha_{nk}}$ where $\kappa_k > 0$ is the rate constant of reaction $R_k$. A chemical reaction system \eqref{eq:crs} with mass-action kinetics is called a \emph{mass-action system}. 

The condition \textbf{(K5)} is unique to this paper. It is a generalization of a property found in mass-action kinetics at boundary steady states but rarely considered in more general settings. Specifically, for reactions of the form $2S_i \to \cdots$ and $S_i+S_j \to \cdots$, we have corresponding rates $R_k(c_i)=\kappa_k c_i^2$ and $R_k(c_i,c_j) = \kappa_k c_i c_j$, respectively. This gives the partial derivatives $\partial_i R_k = 2\kappa_k c_i$ and $\partial_i R_k = \kappa_k c_j$, respectively, both of which vanish at any boundary state where $c_i=c_j=0$. Kinetic assumption \textbf{(K5)} captures both of these situations.

\begin{example}
\label{ex:1000}
Consider the following chemical reaction network introduced in Section \ref{sec:introduction} as equation \eqref{eq:bifunctional}. 
This network has been studied in the context of absolute concentration robustness \cite{shinar2010structural,joshi2022foundations,joshi2023prevalence} and boundary steady state stability \cite{avram2024stability,johnston2026boundary,joshi2026bistability}. Under general kinetics, the network \eqref{eq:bifunctional} corresponds to the following chemical reaction system:
\begin{equation}
\label{eq:bifunctional-des} 
\left\{
\begin{aligned}
\frac{dS_0}{dt} &= -R_1(S_0,E) + R_2(C_1) + R_6(C_2) \\
\frac{dS_1}{dt} &= -R_4(S_1,C_1) + R_3(C_1) + R_5(C_2) \\
\frac{dE}{dt} &= -R_1(S_0,E) + R_2(C_1)+R_3(C_1) \\
\frac{dC_1}{dt} &= R_1(S_0,E) - R_2(C_1) - R_3(C_1) - R_4(S_1,C_1) + R_5(C_2) + R_6(C_2) \\
\frac{dC_2}{dt} &= R_4(S_1,C_1) - R_5(C_2)-R_6(C_2).
\end{aligned}
\right.
\end{equation}
The network \eqref{eq:bifunctional} has the stoichiometric subspace $S = \mbox{span} \left\{ (-1,0,-1,1,0), (0,1,1,-1,0), (0,-1,0,-1,1)\right\}$ so that $\mbox{dim}(S) = 3$, and the conservation laws $S_{tot} = S_0 + S_1 + C_1 + 2C_2$ and $E_{tot} = E + C_1 + C_2$. The network is integer-valued and binary, as all source and target complexes are of the form $S_i$ or $S_i + S_j$. Solutions of \eqref{eq:bifunctional-des} are restricted to the three-dimensional stoichiometric compatibility classes $\mathsf{C}_{c_0} = (c_0 + S) \cap \mathbb{R}_{\geq 0}^5$. The network \eqref{eq:bifunctional} admits the boundary steady state $(S_0,S_1,E,C_1,C_2) = (0,S_{tot},E_{tot},0,0)$ consistent with the conservation laws.
\end{example}

\subsection{Siphons and Siphon-Reduced Networks}
\label{sec:siphon}

The following concepts were originally developed for applications in Petri Net theory \cite{petri1962kommunikation,commoner1971marked} and was adapted to chemical reaction network theory in the context of persistence in \cite{angeli2007petri}. The concept has since been applied to boundary steady state stability and the next-generation matrix method in \cite{avram2025similarity,johnston2026boundary}.

\begin{definition}
\label{def:siphon}
Consider a chemical reaction network $(\mathcal{S},\mathcal{R}$). A nonempty subset $\mathcal{X} \subseteq \mathcal{S}$ is said to be a \emph{siphon} of $(\mathcal{S},\mathcal{R})$ if, for any reaction $R_k \in \mathcal{R}$, $\beta_{ik} > 0$ for some $S_i \in \mathcal{X}$ implies $\alpha_{jk} > 0$ for some $S_j \in \mathcal{X}$. The set $\mathcal{Y} = \mathcal{S} \setminus \mathcal{X}$ is called an \emph{anti-siphon}. A siphon $\mathcal{X} \subseteq \mathcal{S}$ is called \emph{critical} if it does not contain the support of any nonzero conservation law.
\end{definition}
\noindent In other words, $\mathcal{X}$ is a siphon if every reaction that produces a species in $\mathcal{X}$ also consumes a species in $\mathcal{X}$. 

Given a siphon $\mathcal{X} \subseteq \mathcal{S}$ and anti-siphon $\mathcal{Y} = \mathcal{S} \setminus \mathcal{X}$ with sizes $|\mathcal{X}| = m$ and $|\mathcal{Y}| = n-m$, we relabel and reindex the species set $\mathcal{S} = \{ S_1, \ldots, S_n \}$ so that $\mathcal{S} = \mathcal{X} \cup \mathcal{Y}$ with $\mathcal{X} = \{ X_1, \ldots, X_m\}$ and $\mathcal{Y} = \{ Y_1, \ldots, Y_{n-m} \}$. We relabel the corresponding stoichiometric coefficients $\alpha_{ik}$ and $\beta_{ik}$ so that the network reactions have the form
\begin{equation}
\label{eq:reactionxy}
R_{k}: \sum_{i=1}^m \alpha'_{ik} X_i + \sum_{i=1}^{n-m} \alpha''_{ik} Y_i \longrightarrow \sum_{i=1}^m \beta'_{ik} X_i + \sum_{i=1}^{n-m} \beta''_{ik} Y_i, \; \; \; k = 1, \ldots, r.
\end{equation}
The siphon property of $\mathcal{X}$ together with kinetic assumption \textbf{(K2)} implies that no species in $\mathcal{X}$ can be produced when all species in $\mathcal{X}$ are absent. Consequently, the boundary face of the chemical reaction system \eqref{eq:crs} with $x_i=0$ for every $X_i \in \mathcal{X}$ is positively invariant and the set of species vanishing at any boundary steady state $c^* \in \partial \mathbb{R}_{\geq 0}^n$ of \eqref{eq:crs} necessarily forms to a siphon. This reindexing allows us to introduce the following reduced network and corresponding reduced system.

\begin{definition}
\label{def:reduced-graph}
Consider a chemical reaction network $(\mathcal{S},\mathcal{R})$ with a siphon $\mathcal{X} \subseteq \mathcal{S}$. The \emph{$\mathcal{X}$-reduced network} of $(\mathcal{S},\mathcal{R})$ is the chemical reaction network $(\mathcal{X},\mathcal{R}')$ with reactions $R_k' \in \mathcal{R}'$ given by
\begin{equation}
\label{eq:reactionx}
R'_{k}: \sum_{i=1}^m \alpha'_{ik} X_i \longrightarrow \sum_{i=1}^m \beta'_{ik} X_i
\end{equation} 
for all $k \in \{ 1, \ldots, r\}$ where $\alpha'_{ik} \not= \beta'_{ik}$ for at least one $i \in \{ 1, \ldots, m \}$. The corresponding \emph{$\mathcal{X}$-reduced system} is
\begin{equation}
\label{eq:crs2}
\frac{dx}{dt} = \Gamma' R(x,y) =: f(x,y) 
\end{equation}
where $x = (x_1, \ldots, x_m)^T \in \mathbb{R}_{\geq 0}^m$ 
is the concentration vector of $\mathcal{X}$, $y = (y_1, \ldots, y_{n-m})^T \in \mathbb{R}_{\geq 0}^{n-m}$ is the concentration vector of $\mathcal{Y}$, $R(x,y) = (R_1(x,y), \ldots, R_r(x,y))^T \in \mathbb{R}_{\geq 0}^r$ is the reaction rate vector, and 
$\Gamma' \in \mathbb{R}^{m \times r}$ 
is the restriction of the stoichiometric matrix $\Gamma$ to $\mathcal{X}$ 
with entries $\Gamma'_{ik} = \beta'_{ik} - \alpha'_{ik}$. 
\end{definition}


\noindent In other words, the $\mathcal{X}$-reduced network is the network obtained by removing the species in the anti-siphon $\mathcal{Y}$ from the network (i.e., setting $\alpha''_{ik} = 0$ and $\beta''_{ik} = 0$ for $i=1, \ldots, n-m$ in \eqref{eq:reactionxy}). Note that the corresponding variables $y$ may appear in the reduced system \eqref{eq:crs2} through $f(x,y)$.

\begin{example}
\label{ex:1001}
Reconsider the network \eqref{eq:bifunctional} in Example \ref{ex:1000}. The network has the siphon $\mathcal{X} = \{ S_0, C_1, C_2 \}$ since every reaction with a species from $\mathcal{X}$ in its target has a species from $\mathcal{X}$ in its source. Since $\mathcal{X}$ is a siphon and $R(c)$ satisfies \textbf{(K2)}, we have that $(S_0^*,S_1^*,E^*,C_1^*,C_2^*) = (0,S_{tot},E_{tot},0,0)$ is a boundary steady state at which every species of $\mathcal{X}$ vanishes.

To highlight the system's behavior on the siphon $\mathcal{X}$, we reindex the species as $X_1 = S_0, X_2 = C_1, X_3 = C_2, Y_1 = S_1,$ and $Y_2 = E$ in accordance with \eqref{eq:reactionxy}. The network \eqref{eq:bifunctional} then becomes:
\begin{equation}
\label{eq:bifunctional2} \left\{ \; \;
\begin{tikzcd}[ampersand replacement=\&]
            X_1 + Y_2 \arrow[r,yshift=0.5ex,"R_1"] \& X_2 \arrow[l,yshift=-0.5ex,"R_2"] \arrow[r,"R_3"] \& Y_1 + Y_2 \\
            Y_1 + X_2 \arrow[r,yshift=0.5ex,"R_4"] \& X_3 \arrow[l,yshift=-0.5ex,"R_5"] \arrow[r,"R_6"] \& X_1 + X_2.
            \end{tikzcd} \right.
\end{equation}
This gives the following $\mathcal{X}$-reduced network $(\mathcal{X},\mathcal{R}')$ \eqref{eq:reactionx}:
\[ \left\{ \; \;
\begin{tikzcd}[ampersand replacement=\&]
            X_1 \arrow[r,yshift=0.5ex,"R_1"] \& X_2 \arrow[l,yshift=-0.5ex,"R_2"] \arrow[r,"R_3"] \& 0 \\
            X_2 \arrow[r,yshift=0.5ex,"R_4"] \& X_3 \arrow[l,yshift=-0.5ex,"R_5"] \arrow[r,"R_6"] \& X_1 + X_2.
            \end{tikzcd} \right.
\]
The corresponding $\mathcal{X}$-reduced system \eqref{eq:crs2} is:
\begin{equation}
\label{eq:bifunctional-des2} 
\left\{
\begin{aligned}
\frac{dx_1}{dt} &= -R_1(x_1,y_2) + R_2(x_2) + R_6(x_3) \\
\frac{dx_2}{dt} &= R_1(x_1,y_2) - R_2(x_2) - R_3(x_2) - R_4(y_1,x_2) + R_5(x_3) + R_6(x_3) \\
\frac{dx_3}{dt} &= R_4(y_1,x_2) - R_5(x_3)-R_6(x_3).
\end{aligned}
\right.
\end{equation}
\end{example}





\subsection{Next-generation Matrix Method}

We now outline the next-generation matrix method for establishing the stability of a boundary steady state of a chemical reaction system \eqref{eq:crs}. This approach was pioneered in \cite{van2002reproduction,van2008further} as a method for computing the basic reproduction number in epidemiological models and adapted to boundary steady state stability for biochemical systems in \cite{avram2024advancing,johnston2026boundary}.

Consider a chemical reaction network $(\mathcal{S},\mathcal{R})$ and associated chemical reaction system \eqref{eq:crs} with boundary steady state $c^* \in \partial \mathbb{R}_{\geq 0}^n$ and associated siphon $\mathcal{X} \subseteq \mathcal{S}$. Reindex the species as in Section \ref{sec:siphon} so that $c = (x, y) \in \mathbb{R}_{\geq 0}^n$ and note that the boundary steady state has the form $c^* = (x^*, y^*)$ with $x^* = 0 \in \mathbb{R}^m$ and $y^* \in \mathbb{R}_{>0}^{n-m}$. 
We split the right-hand side of \eqref{eq:crs2} into $f(x,y) = \mathcal{F}(x,y) - \mathcal{V}(x,y)$ and compute the Jacobian with respect to $x$ and evaluated at $(x^*,y^*)$ to get $J = F-V$ where $J = D_xf(x^*, y^*)$, $F = D_x\mathcal{F}(x^*, y^*)$, and $V = D_x\mathcal{V}(x^*, y^*)$.

We have the following definition and results from our earlier work \cite{johnston2026boundary}.

\begin{definition}
\label{def:zmatrix}
A matrix $A \in \mathbb{R}^{d \times d}$ is a $Z$-matrix if its off-diagonal entries satisfy $A_{ij} \le 0$ for all $i \neq j$.
\end{definition}

\begin{theorem}[Theorem 1, \cite{johnston2026boundary}]
\label{thm:ngm}
Suppose $F$ is nonnegative and nonzero, $V$ is a $Z$-matrix with a nonnegative inverse, and $c^*$ is locally asymptotically stable with respect to the invariant subsystem at the intersection of the boundary face containing $c^*$ and the stoichiometric compatibility class $\mathsf{C}_{c_0}$. Then $c^*$ is locally asymptotically stable relative to $\mathsf{C}_{c_0}$ if $\rho(FV^{-1}) < 1$, and unstable if $\rho(FV^{-1}) > 1$, where $\rho(\cdot)$ denotes the spectral radius.
\end{theorem}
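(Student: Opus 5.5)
The plan is to reduce the claim to the spectral equivalence underlying the next-generation matrix method, and then to transfer stability from the linearization on the vanishing species to the whole compatibility class. Write the Jacobian of the full system at $c^*=(x^*,y^*)$ in block form with respect to $c=(x,y)$. The boundary face $\{x=0\}$ is positively invariant because $\mathcal{X}$ is a siphon. Hence $f(0,y)=0$ for all $y$, so $D_y f(x^*,y^*)=0$. The full Jacobian is therefore block lower triangular:
\[
D\!\left(\Gamma R\right)(c^*) = \begin{bmatrix} J & 0 \\ * & J_{yy} \end{bmatrix}, \qquad J = F - V .
\]
Its spectrum is $\sigma(J)\cup\sigma(J_{yy})$.

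The same block structure holds after restricting to the stoichiometric subspace $S$ (equivalently, to $\mathsf{C}_{c_0}$). To see this, choose coordinates on $S$ that split into the $x$-directions and the directions tangent to the face $\{x=0\}\cap\mathsf{C}_{c_0}$. The diagonal block on the face is the linearization of the invariant boundary subsystem. By the hypothesis of local asymptotic stability within that subsystem, I would take this block to contribute only eigenvalues with negative real part, or at least not to destroy stability. I flag a subtlety here: asymptotic stability of the boundary subsystem does not by itself give a hyperbolic linearization. I would handle it either by a center-manifold/triangular-system argument, in which the $x$-dynamics are governed by $J$ and $x\to0$ exponentially when $J$ is Hurwitz, or by assuming hyperbolicity as in \cite{van2002reproduction}.

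The core step is the lemma of van den Driessche and Watmough: if $F\ge0$ and $V$ is a $Z$-matrix with $V^{-1}\ge0$, then $J=F-V$ is Hurwitz if and only if $\rho(FV^{-1})<1$, and $J$ has an eigenvalue with positive real part if and only if $\rho(FV^{-1})>1$. I would prove it as follows.
\begin{enumerate}
\item A $Z$-matrix with a nonnegative inverse is a nonsingular M-matrix, so $-V$ is Hurwitz.
\item If $\rho(FV^{-1})<1$, then $V-F=(I-FV^{-1})V$. The factor $(I-FV^{-1})^{-1}=\sum_k (FV^{-1})^k\ge0$, so $(V-F)^{-1}=V^{-1}(I-FV^{-1})^{-1}\ge0$. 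Since $V-F$ is a $Z$-matrix with a nonnegative inverse, it is a nonsingular M-matrix, and $J$ is Hurwitz.
\item Conversely, suppose $\rho(FV^{-1})\ge1$. By Perron--Frobenius there is a vector $w\ge0$, $w\neq0$, with $FV^{-1}w=\rho w$. Setting $v=V^{-1}w\ge0$ gives $Fv=\rho Vv$.
\item Consider the Metzler matrix $F/s-V$ for $s\ge1$. Its spectral abscissa is continuous and monotone in $s$, and $F/\rho-V$ has the nonnegative null vector $v$. A Perron-type argument for Metzler matrices then gives $s(F/\rho - V)\ge0$. Monotonicity yields $s(J)\ge s(F/\rho-V)\ge0$, with strict inequality when $\rho>1$. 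That $s(J)>0$ strictly can be obtained by perturbing to irreducible matrices and using strict monotonicity of the Perron root.
\end{enumerate}

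Combining these pieces: if $\rho(FV^{-1})<1$, all eigenvalues of the linearization relative to $\mathsf{C}_{c_0}$ have negative real part, so $c^*$ is locally asymptotically stable by the linearization theorem. If $\rho(FV^{-1})>1$, the eigenvalue of $J$ with positive real part is an eigenvalue of the restricted Jacobian, and instability follows. For the latter I would check that the corresponding eigenvector lies in $S$. It does, because the $x$-block is invariant and the rows of $\Gamma$ on $\mathcal{X}$ generate directions inside $S$.

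I expect the main obstacle to be this last bookkeeping: verifying that restricting to $\mathsf{C}_{c_0}$ preserves the triangular structure, and that the boundary-subsystem hypothesis exactly supplies the missing spectral information. The M-matrix lemma is standard.
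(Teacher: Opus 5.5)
Your route is the standard one behind this cited result: a block-triangular Jacobian from invariance of the face $\{x=0\}$, followed by the van den Driessche--Watmough M-matrix lemma. Steps 1--3 and the weak form of Step 4 are fine.

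\textbf{The gap.} The one step that does not work as written is the strict inequality $s(J)>0$ when $\rho(FV^{-1})>1$, which is exactly what the instability half needs. Perturbing to an irreducible matrix such as $F+\epsilon E-V$ and using strict monotonicity gives $s(F+\epsilon E-V)>0$ for each $\epsilon>0$. But strict inequalities do not survive the limit $\epsilon\to0$, so you recover only $s(J)\ge 0$, which does not give instability.

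\textbf{A repair.} Shift $V$ rather than scale $F$. For $t\ge0$, $V+tI$ is again a $Z$-matrix with nonnegative inverse, and $V^{-1}-(V+tI)^{-1}=tV^{-1}(V+tI)^{-1}\ge0$. Hence $g(t)=\rho\bigl(F(V+tI)^{-1}\bigr)$ is continuous, nonincreasing, and tends to $0$ as $t\to\infty$. If $g(0)>1$, pick $t_0>0$ with $g(t_0)=1$. A Perron vector $w\ge0$ of $F(V+t_0I)^{-1}$ gives $u=(V+t_0I)^{-1}w\ge0$, $u\neq0$, with $(F-V)u=t_0u$, so $s(J)\ge t_0>0$. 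Alternatively, suppose $s(J)=0$. Your Steps 2--4 establish $s(F-V)<0\iff\rho(FV^{-1})<1$; applied to the pair $(F,V+tI)$ this gives $g(t)<1$ for every $t>0$, and continuity then forces $\rho(FV^{-1})\le1$.

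\textbf{Smaller points.}
\begin{itemize}
\item Your reason that the unstable eigenvector lies in $S$ is not the right one. The correct reason is that the full Jacobian is $\Gamma\,DR(c^*)$, so any eigenvector $u$ for an eigenvalue $\lambda\neq0$ satisfies $u=\lambda^{-1}\Gamma\,DR(c^*)u\in\operatorname{im}\Gamma=S$.
\item Splitting $S$ into ``$x$-directions'' plus face directions presupposes $\operatorname{im}\Gamma'=\mathbb{R}^m$. This is automatic when $\rho(FV^{-1})<1$, since then $J=\Gamma' D_xR$ is nonsingular. For the instability half, the previous remark makes the split unnecessary.
\item On the hyperbolicity issue you flag: the hypothesis really is only asymptotic stability of the face dynamics. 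For the $\rho<1$ half, your triangular argument can be completed with a composite Lyapunov function $W(z)+k\sqrt{x^{T}Px}$ for large $k$. Here $z$ parametrizes the face directions in $S$, $W$ comes from a converse Lyapunov theorem for the face subsystem, $P$ solves $J^{T}P+PJ=-I$, and the coupling into $z'$ is $O(|x|)$. The paper's own appeal to Hartman--Grobman in Corollary~\ref{cor:2} tacitly assumes hyperbolicity.
\item Instability \emph{relative to} $\mathsf{C}_{c_0}$ requires escaping solutions that start in $\mathbb{R}^n_{\ge0}$, whereas the linearization theorem produces them only in $c^*+S$. The Metzler structure of $J$ (a nonnegative eigenvector for $s(J)$, together with $y^*>0$) is what lets you carry this into the orthant. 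The standard argument also leaves this point implicit.
\end{itemize}
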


\begin{theorem}[Theorem 2, \cite{johnston2026boundary}]
\label{thm:ngm2}
Suppose that $V$ is a $Z$-matrix and that there is a permutation matrix $P$ such that
\[PVP^T = \left[ \begin{array}{cccc}
A_1 & B_{12} & \cdots & B_{1q} \\
0 & A_2 & \cdots & B_{2q} \\
\vdots & \vdots & \ddots & \vdots \\
0 & 0 & \cdots & A_q
\end{array}\right]
\]
where $B_{ij}$ are nonpositive rectangular matrices of appropriate size, $0$ denotes zero matrices of appropriate size, $\mathbbm{1} \in \mathbb{R}^d$ is the vector of all ones, and $A_i$ are irreducible $Z$-matrices which satisfy the condition that $\mathbbm{1}^T A_i$ is nonnegative and nonzero. Then $V$ has a nonnegative inverse.
\end{theorem}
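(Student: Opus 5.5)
We need to show that $V$ has a nonnegative inverse. The block upper triangular form reduces this to a statement about each diagonal block $A_i$ separately.

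\textbf{Step 1: each $A_i$ is a nonsingular M-matrix with $A_i^{-1} \geq 0$.} Since $A_i$ is a $Z$-matrix, write $A_i = sI - B_i$ with $s \geq \max_j (A_i)_{jj}$, so that $B_i \geq 0$. Because $A_i$ is irreducible, $B_i$ is irreducible as well.

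The column-sum hypothesis $\mathbbm{1}^T A_i \geq 0$ gives $\mathbbm{1}^T B_i \leq s\,\mathbbm{1}^T$, with strict inequality in at least one entry. By Perron--Frobenius for irreducible nonnegative matrices, $B_i^T$ has a positive eigenvector $v$ for the eigenvalue $\rho(B_i)$. Then
\[
\rho(B_i)\, v^T \mathbbm{1} \;=\; v^T B_i^T \mathbbm{1} \;<\; s\, v^T \mathbbm{1}.
\]
The inequality is strict because $v > 0$ and $B_i^T \mathbbm{1} \leq s\mathbbm{1}$ with at least one strict entry. Hence $\rho(B_i) < s$. The Neumann series then gives
\[
A_i^{-1} = s^{-1} \sum_{k \geq 0} (B_i/s)^k \geq 0.
\]
This step is the main point requiring care: irreducibility is essential here. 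Without it, a row-sum-deficient column could sit in a separate class, and the weak inequality would not propagate to strict dominance of $\rho$.

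\textbf{Step 2: assemble the block inverse.} Since each diagonal block is invertible, $PVP^T$ is block upper triangular and invertible. I would verify $(PVP^T)^{-1} \geq 0$ by induction on $q$. For $q = 1$ this is Step 1. For $q > 1$, write
\[
PVP^T = \begin{bmatrix} A_1 & B \\ 0 & M \end{bmatrix},
\]
where $M$ has the same structure with $q-1$ blocks, so $M^{-1} \geq 0$ by the induction hypothesis. Then
\[
(PVP^T)^{-1} = \begin{bmatrix} A_1^{-1} & -A_1^{-1} B M^{-1} \\ 0 & M^{-1} \end{bmatrix}.
\]
The off-diagonal block $-A_1^{-1} B M^{-1}$ is nonnegative because $A_1^{-1} \geq 0$, $M^{-1} \geq 0$, and $B \leq 0$.

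\textbf{Step 3: undo the permutation.} Since $P$ is a permutation matrix, $V^{-1} = P^T (PVP^T)^{-1} P$ is a permutation of a nonnegative matrix, hence nonnegative.
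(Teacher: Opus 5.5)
Your argument is correct, but Step 1 has a transposition slip. The identity $\rho(B_i)\,v^T\mathbbm{1} = v^T B_i^T\mathbbm{1}$ needs $v^T B_i^T = \rho(B_i)\,v^T$, that is, $B_i v = \rho(B_i)\,v$. So $v$ must be the Perron vector of $B_i$ itself, not of $B_i^T$. If instead $B_i^T v = \rho(B_i)\,v$, then $v^T B_i^T\mathbbm{1} = (B_i v)^T\mathbbm{1}$, which you have no control over. Since $B_i$ is also irreducible and nonnegative, a positive $v$ with $B_i v = \rho(B_i)v$ exists, and the rest goes through unchanged. Note also that $\rho(B_i)\ge 0$ together with $\rho(B_i)<s$ forces $s>0$, so the factor $s^{-1}$ and the Neumann series are legitimate. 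Steps 2 and 3 (block back-substitution with $B\le 0$, then undoing the permutation) are fine as written.

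The paper does not prove this statement; it is quoted from the authors' earlier work, so there is no in-paper argument to match. The natural comparison is the M-matrix route the paper leans on elsewhere: Corollary~\ref{cor:2}, part 1, invokes Plemmons' characterization. On that route, each $A_i$ is an irreducible $Z$-matrix with nonnegative column sums and one strictly positive column sum. It is therefore irreducibly diagonally dominant by columns, hence a nonsingular M-matrix with $A_i^{-1}\ge 0$, and your Step 2 finishes the job. Your Perron--Frobenius and Neumann-series argument is a self-contained proof of exactly that M-matrix fact. It costs a few extra lines but shows exactly where irreducibility and the single strictly positive column sum enter, namely in making $\rho(B_i)<s$ strict. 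The citation route is shorter but hides this inside external theorems.
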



Theorems \ref{thm:ngm} and \ref{thm:ngm2} provide conditions under which the stability of a boundary steady state can be determined from the spectral radius of the matrix $FV^{-1}$ rather than the eigenvalues of the subsystem Jacobian $J = F-V$. Crucially, $F$ and $V$ can often be chosen so that $FV^{-1}$ is significantly lower rank than $F-V$ and consequently its spectral radius is typically easier to compute. In our earlier work \cite{johnston2026boundary}, we presented a heuristic for producing valid and computationally efficient splittings $f(x,y) = \mathcal{F}(x,y) - \mathcal{V}(x,y)$; however, we left a rigorous algorithmic treatment and biological interpretation for future work. Establishing explicit conditions, algorithms, and biological interpretations for determining splittings is the central focus of this paper.

We also prove the following structural result, which is analogous to Lemma 1 of \cite{van2002reproduction}. The proof is in Appendix \ref{app:a}.

\begin{theorem}
\label{thm:2}
Consider a chemical reaction network $(\mathcal{S},\mathcal{R})$ satisfying kinetic assumptions \textbf{(K1)}--\textbf{(K5)}. Suppose there is a boundary steady state $c^* = (x^*,y^*) \in \partial \mathbb{R}_{\geq 0}^n$ with $x^* = 0$, where $\mathcal{X}$ is the set of vanishing species at $c^*$. Let $J = D_xf(x^*,y^*)$ denote the Jacobian of the $\mathcal{X}$-reduced system evaluated at $(x^*,y^*)$. Then $-J$ is a $Z$-matrix.
\end{theorem}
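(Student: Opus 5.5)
We need to show that every off-diagonal entry $J_{ij}$, $i\neq j$, is nonnegative. The plan is to expand $J$ entrywise using the reduced system \eqref{eq:crs2}. This gives
\[
J_{ij} = \sum_{k=1}^r (\beta'_{ik}-\alpha'_{ik})\, \partial_j R_k, \qquad \partial_j R_k := \frac{\partial R_k(x^*,y^*)}{\partial x_j},
\]
so it suffices to show that each summand with $i\neq j$ is nonnegative. Since $R_k\geq 0$ by \textbf{(K2)}, we will also use that $\partial_j R_k \geq 0$ at $(x^*,y^*)$ for every $k$ and $j$. Indeed, either $\alpha'_{jk}=0$ and the derivative vanishes by \textbf{(K3)}, or one of \textbf{(K4)}/\textbf{(K5)} applies and the derivative is positive or zero. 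Hence the only dangerous summands are those with $\alpha'_{ik}>\beta'_{ik}\geq 0$, in particular $\alpha'_{ik}>0$, together with $\partial_j R_k>0$.

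The key step is a case analysis showing that such summands vanish. Suppose $i\neq j$ and $\partial_j R_k \neq 0$. By \textbf{(K3)} this requires $\alpha'_{jk}>0$, so $X_j$ is a reactant of $R_k$. If also $\alpha'_{ik}>0$, then $X_i$ is a reactant distinct from $X_j$ with $\alpha'_{ik}-\delta_{ij} = \alpha'_{ik}>0$. Moreover $x_i^*=0$, since every species of $\mathcal{X}$ vanishes at $c^*$. Then \textbf{(K5)} forces $\partial_j R_k = 0$, a contradiction. So whenever $\partial_j R_k\neq 0$ with $i \neq j$, we have $\alpha'_{ik}=0$, and the summand equals $\beta'_{ik}\,\partial_j R_k \geq 0$. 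Summing over $k$ gives $J_{ij}\geq 0$ for $i \neq j$, which is exactly the statement that $-J$ is a $Z$-matrix.

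The main subtlety is being careful with \textbf{(K5)}: it must be applied with the index roles correct. The species whose partial derivative we take is $X_j$, and the other absent reactant is $X_i$, which vanishes because $x^*=0$ on all of $\mathcal{X}$. We should also note that the binary assumption is not actually needed. Reactions with $\alpha'_{ik}=\beta'_{ik}$ for all $i$, which are removed in the reduced network, contribute zero to $\Gamma'$ anyway, so including or excluding them does not affect $J$. No other obstacle is expected; the argument is a direct sign check mirroring Lemma 1 of \cite{van2002reproduction}.
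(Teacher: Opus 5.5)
Your proof is correct and follows the same route as the paper. Both expand $J_{ij}=\sum_k(\beta'_{ik}-\alpha'_{ik})\,\partial_j R_k$ and use \textbf{(K3)} and \textbf{(K5)} to show that any reaction with $\partial_j R_k\neq 0$ must have $\alpha'_{ik}=0$, so every surviving summand is $\beta'_{ik}\,\partial_j R_k\ge 0$. Your direct application of \textbf{(K5)}, with $X_i$ as the absent co-reactant, is slightly cleaner than the paper's case split on whether $X_j$ is the unique $\mathcal{X}$-reactant, and you are right that the binary assumption the paper invokes at that step is not needed.
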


Consider the following example.

\begin{example}
\label{ex:1002}
Reconsider the network \eqref{eq:bifunctional} in Examples \ref{ex:1000} and \ref{ex:1001}. The Jacobian of \eqref{eq:bifunctional-des2} is given by
\begin{equation}
\label{eq:ex8}
J = \begin{bmatrix}
-\partial_1 R_1 & \partial_2 R_2 & \partial_3 R_6 \\
\partial_1 R_1 & -\partial_2 (R_2 + R_3 + R_4) & \partial_3( R_5+R_6) \\
0 & \partial_2 R_4 & -\partial_3 (R_5+ R_6)
\end{bmatrix}.
\end{equation}
Note that $\partial_i R_k > 0$ for all nonzero entries in \eqref{eq:ex8} by kinetic assumption \textbf{(K4)}, evaluated at the boundary steady state where $y^* \in \mathbb{R}_{> 0}^{n-m}$, and $\partial_i R_k = 0$ for the zero entries by \textbf{(K3)}. Consistent with Theorem \ref{thm:2}, $-J$ is a $Z$-matrix. The Jacobian can be decomposed as $J = F - V$ where
\[
F = \begin{bmatrix}
0 & 0 & 0 \\
0 & 0 & \partial_3 R_6 \\
0 & 0 & 0
\end{bmatrix} \mbox{ and } V = \begin{bmatrix}
\partial_1 R_1 & -\partial_2 R_2 & -\partial_3 R_6 \\
-\partial_1 R_1 & \partial_2 (R_2 + R_3 +R_4 ) & -\partial_3 R_5 \\
0 & -\partial_2 R_4 & \partial_3 (R_5+R_6)
\end{bmatrix}.
\]
We have that $F$ is nonnegative and nonzero, and $V$ is an irreducible $Z$-matrix. Furthermore, we have that $\mathbbm{1}^T V = (0, \partial_2 R_3, 0)$ which is nonnegative and nonzero so that Theorem \ref{thm:ngm2} implies $V$ is invertible. 
The next-generation matrix $FV^{-1}$ is given by
\[
FV^{-1} = \begin{bmatrix}
0 & 0 & 0 \\
\displaystyle\frac{\partial_2 R_4 \partial_3 R_6}{\partial_2 R_3\partial_3 (R_5 + R_6)} & \displaystyle\frac{\partial_2 R_4 \partial_3 R_6}{\partial_2 R_3 \partial_3 (R_5 + R_6)} & \displaystyle\frac{\partial_3 R_6 \partial_2 (R_3 + R_4)}{\partial_2 R_3\partial_3 (R_5 + R_6)} \\
0 & 0 & 0
\end{bmatrix}
\]
Since $F$ is a nonzero nonnegative matrix and $V^{-1}$ is nonnegative, the hypotheses of Theorem \ref{thm:ngm} are satisfied and we may compute the reproduction number. We have
\[
\rho(FV^{-1}) = \displaystyle\frac{\partial_2 R_4 \partial_3 R_6}{\partial_2 R_3 \partial_3 (R_5 + R_6)}.
\]
The steady state $c^*$ is locally asymptotically stable relative to its stoichiometric compatibility class if $\rho(FV^{-1}) < 1$, and unstable if $\rho(FV^{-1})>1$.
\end{example}

\subsection{Multisets and Directed Graphs}
\label{sec:directedgraph}

The following generalized notion of a set will be important when developing graphical approaches for applying the next-generation matrix method to biochemical reaction networks. This notion generalizes the concept of a set by allowing elements to appear with multiplicity greater than one \cite{blizard1991development,stanley2011enumerative}.

\begin{definition}
\label{def:multiset}
Consider a universal set $\Omega = \{ \omega_1, \ldots, \omega_n \}$. A \emph{multiset} $M$ over $\Omega$ is a pair $(\Omega, m_M)$ where $m_M \colon \Omega \to \mathbb{N}_0$, where $\mathbb{N}_0 = \{ 0, 1, 2, \ldots \}$ denotes the nonnegative integers, is a multiplicity function mapping each element $x \in \Omega$ to a number of occurrences $m_M(x)$. We also define the following:
\begin{enumerate}
    \item Given two multisets $A = (\Omega, m_A)$ and $B = (\Omega, m_B)$ over $\Omega$, the \emph{multiset sum} $A \oplus B = (\Omega, m_{A \oplus B})$ is the multiset defined by the operation $m_{A \oplus B}(\omega_i) = m_A(\omega_i) + m_B(\omega_i)$ for all $\omega_i \in \Omega$.
    \item Given two multisets $A = (\Omega, m_A)$ and $B = (\Omega, m_B)$ over $\Omega$, the \emph{multiset subtraction} $A \ominus B = (\Omega, m_{A \ominus B})$ is the multiset defined by the operation $m_{A \ominus B}(\omega_i) = \max(0, m_A(\omega_i) - m_B(\omega_i))$ for all $\omega \in \Omega$.
    \item The \emph{power multiset} of a universal set $\Omega$ (denoted $\mathcal{M}(\Omega)$) is the set of all possible multisets over $\Omega$.
\end{enumerate}
\end{definition}

We will represent multisets as enumerated sets where elements with multiple instances are listed multiple times, where each element $\omega_i$ is listed $m_A(\omega_i)$ times. Multisets will be important in the main results as we construct a new graph (called the ALT-graph) whose edges are mapped to multisets. We illustrate this notation in the following example.

\begin{example}
Consider the multisets $A = \{ \omega_1, \omega_1, \omega_2, \omega_3\}$ and $B = \{ \omega_1, \omega_3, \omega_3, \omega_3\}$ over the universal set $\Omega = \{ \omega_1, \omega_2, \omega_3\}$. That is, we have $m_A(\omega_1)=2$, $m_A(\omega_2) = 1$, $m_A(\omega_3) = 1$, $m_B(\omega_1)=1$, $m_B(\omega_2)=0$, and $m_B(\omega_3)=3$. Note that multiple instances of elements are allowed, as with $\omega_1$ in $A$ and $\omega_3$ in $B$. We use the notation $\omega_1 \in A$ to indicate that $\omega_1$ is in the set, which is equivalent to $m_A(\omega_1) \geq 1$. For example, we have $\omega_1 \in B$ but $\omega_2 \not\in B$. However, if we wish to specify that multiple instances of an element are in a set, we will use subset notation ($\subseteq$). For example, we have that $\{ \omega_1, \omega_1 \} \subseteq A$, which is equivalent to $m_A(\omega_1) \geq 2$, and $\{ \omega_3, \omega_3, \omega_3 \} \subseteq B$, which is equivalent to $m_B(\omega_3) \geq 3$.

Applying the multiset sum and subtraction rules, we have that $A \oplus B = \{ \omega_1, \omega_1, \omega_1, \omega_2, \omega_3, \omega_3, \omega_3, \omega_3 \}$ and $A \ominus B = \{ \omega_1, \omega_2\}$. For multiset sum, we combine the two sets without removing any instances of duplicated elements. For multiset subtraction, we must be more careful not to remove more instances of an element than are present. In this example, we may not take more instances of $\omega_3$ away from $A$ than are initially present so that we compute $m_{A \ominus B}(\omega_3) = \max(0, m_A(\omega_3) - m_B(\omega_3)) = \max(0,-2) = 0$. A consequence of this is that we have $(A \ominus B) \oplus B \not= A$ so that multiset subtract is not the inverse of multiset sum. This limitation is worth noting but will not affect the analysis that follows.
\end{example}

We now extend the notion of multisets to directed graphs by using multisets as edge labels.

\begin{definition}
\label{def:directedgraph}
A \emph{multiset-valued directed graph} $\mathcal{G} = (\mathcal{V}, \mathcal{E}, \Omega, \phi)$ consists of a vertex set $\mathcal{V}$, an edge set $\mathcal{E} \subseteq \mathcal{V} \times \mathcal{V}$, a universal label set $\Omega = \{\omega_1, \dots, \omega_n\}$, and a labeling function $\phi : \mathcal{V} \times \mathcal{V} \to \mathcal{M}(\Omega)$ such that $\phi(e) \neq \emptyset$ if and only if $e \in \mathcal{E}$. Given two multiset-valued directed graphs $\mathcal{G}_1 = (\mathcal{V}, \mathcal{E}_1, \Omega, \phi_1)$ and $\mathcal{G}_2 = (\mathcal{V}, \mathcal{E}_2, \Omega, \phi_2)$ on a common vertex set $\mathcal{V}$ and universal label set $\Omega$, we define the following operations:
\begin{enumerate}
    \item The \emph{graph sum} $\mathcal{G}_1 \oplus \mathcal{G}_2 = (\mathcal{V}, \mathcal{E}_{\oplus}, \Omega, \phi_{\oplus})$ has edge set $\mathcal{E}_{\oplus} = \left\{ e \in \mathcal{V} \times \mathcal{V} \;\middle|\; \phi_1(e) \oplus \phi_2(e) \neq \emptyset \right\}$ and labeling function $\phi_{\oplus}(e) = \phi_1(e) \oplus \phi_2(e)$ for all $e \in \mathcal{V} \times \mathcal{V}$.
    \item The \emph{graph subtraction} $\mathcal{G}_1 \ominus \mathcal{G}_2 = (\mathcal{V}, \mathcal{E}_{\ominus}, \Omega, \phi_{\ominus})$ has edge set $\mathcal{E}_{\ominus} = \left\{ e \in \mathcal{V} \times \mathcal{V} \;\middle|\; \phi_1(e) \ominus \phi_2(e) \neq \emptyset \right\}$ and labeling function $\phi_{\ominus}(e) = \phi_1(e) \ominus \phi_2(e)$ for all $e \in \mathcal{V} \times \mathcal{V}$.
\end{enumerate}
\end{definition}

\noindent A multiset-valued directed graph is a directed graph where each edge is labeled with a multiset drawn from some universal set $\Omega$ (Definition \ref{def:multiset}). The assignment of multisets to edges is given by the function $\phi$. 

Standard graph-theoretic notions---such as directed paths, reachability, and strong connectivity---apply to multiset-valued directed graphs, where an edge $e = (v_i, v_j)$ is considered present if and only if $\phi(e) \not= \emptyset$. We have the following definitions.

\begin{definition}
\label{def:connectivity}
Consider a multiset-valued directed graph $\mathcal{G} = (\mathcal{V}, \mathcal{E}, \Omega, \phi)$. We introduce the following graph-theoretic notions for multiset-valued directed graphs:
\begin{enumerate}
    \item Two vertices $v_i, v_j \in \mathcal{V}$ are \emph{connected} if there is a sequence $\{e_{i_0 i_1}, e_{i_1 i_2}, \dots, e_{i_{p-1} i_p}\}$ with $i_0 = i$ and $i_p = j$ such that for every $\ell = 1, \dots, p$, either $e_{i_{\ell-1} i_\ell} \in \mathcal{E}$ or $e_{i_\ell i_{\ell-1}} \in \mathcal{E}$. 

    \item There is a \emph{directed path} from $v_i \in \mathcal{V}$ to $v_j \in \mathcal{V}$ if there is a sequence $\{e_{i_0 i_1}, e_{i_1 i_2}, \dots, e_{i_{p-1} i_p}\}$ with $i_0 = i$ and $i_p = j$ such that $e_{i_{\ell-1} i_\ell} \in \mathcal{E}$. 

    \item A \emph{connected component} is a maximal nonempty subset $\mathcal{Z} \subseteq \mathcal{V}$ such that every $v_i \in \mathcal{Z}$ is connected to every $v_j \in \mathcal{Z}$.

    \item A \emph{strongly connected component} is a maximal nonempty subset $\mathcal{Z} \subseteq \mathcal{V}$ such that there is a directed path from every $v_i \in \mathcal{Z}$ to every $v_j \in \mathcal{Z}$. We let $\mathcal{Z}_1, \dots, \mathcal{Z}_c$ denote the strongly connected components of $\mathcal{G}$ and note that the strongly connected components form a partition of $\mathcal{V}$ so that $\mathcal{V} = \mathcal{Z}_1 \cup \dots \cup \mathcal{Z}_c$.
\end{enumerate}
\end{definition}

We illustrate these operations in the following example.

\begin{example}
Consider the vertex set $\mathcal{V} = \{ v_1, v_2, v_3 \}$, universal label set $\Omega = \{ \omega_1, \omega_2, \omega_3\}$, label functions $\phi_1$ and $\phi_2$ with $\phi_1(e_{11}) = \{ \omega_1 \}$, $\phi_1(e_{12}) = \{ \omega_1, \omega_2 \}$, $\phi_1(e_{13}) = \{ \omega_3 \}$, $\phi_1(e_{23})= \{ \omega_3, \omega_3 \}$, $\phi_1(e_{32})=\{ \omega_2 \}$, $\phi_2(e_{12}) = \{ \omega_1 \}$, $\phi_2(e_{23})= \{ \omega_3 \}$, $\phi_2(e_{32})=\{ \omega_2 \}$, and $\phi_1(e_{ij}) = \emptyset$ and $\phi_2(e_{ij}) = \emptyset$ for all remaining edges, and corresponding edge sets $\mathcal{E}_1 = \{ e_{11},e_{12}, e_{13}, e_{23}, e_{32} \}$ and $\mathcal{E}_2 = \{ e_{12}, e_{23}, e_{32} \}$. This gives the following multiset-valued directed graphs:\\
\[
\begin{tikzcd}[ampersand replacement=\&,execute at end picture={
        \draw[black, thin] 
            ([shift={(-5pt,-5pt)}]current bounding box.south west) 
            rectangle 
            ([shift={(5pt,5pt)}]current bounding box.north east);
            \node[above] at ([shift={(0,5pt)}]current bounding box.north) {$\mathcal{G}_1 = (\mathcal{V}, \mathcal{E}_1, \Omega, \phi_1)$};
    }]
v_1 \arrow[rr,"{\{\omega_1, \omega_2\}}"] \arrow[loop left,"\omega_1"] \arrow[rdd,"\omega_3"'] \& \& v_2 \arrow[ldd,xshift=0.4ex,yshift=-0.4ex,"{\{\omega_3, \omega_3\}}"] \\
\& \& \\
\& v_3 \arrow[ruu,xshift=-0.4ex,yshift=0.4ex,"\omega_2"] \&
\end{tikzcd} \hspace{1in} \begin{tikzcd}[ampersand replacement=\&,execute at end picture={
        \draw[black, thin] 
            ([shift={(-5pt,-5pt)}]current bounding box.south west) 
            rectangle 
            ([shift={(5pt,5pt)}]current bounding box.north east);
            \node[above] at ([shift={(0,5pt)}]current bounding box.north) {$\mathcal{G}_2 = (\mathcal{V}, \mathcal{E}_2, \Omega, \phi_2)$};
    }]
v_1 \arrow[rr,"{\color{white}\{\}}\omega_1{\color{white}\{\}}"] \& \& v_2 \arrow[ldd,xshift=0.4ex,yshift=-0.4ex,"\omega_3"] \\
\& \& \\
\& v_3 \arrow[ruu,xshift=-0.4ex,yshift=0.4ex,"\omega_2"] \&
\end{tikzcd}
\]
For notational simplicity, we label edges associated with a singleton set without set braces. Note that we allow self-loops (e.g., $\phi_1(e_{11}) = \{ \omega_1$\} in $\mathcal{G}_1$), edges associated with multiple labels (e.g., $\phi_1(e_{12}) = \{ \omega_1, \omega_2\}$ in $\mathcal{G}_1$), labels associated with multiple edges (e.g., $\omega_1 \in \phi_1(e_{11}) \cap \phi_1(e_{12})$ in $\mathcal{G}_1$), and multiple instances of a label associated with an edge (e.g., $\{ \omega_3, \omega_3 \} \subseteq \phi_1(e_{23})$ in $\mathcal{G}_1$). In both graphs, the connected component is the whole vertex set $\{ v_1, v_2, v_3 \}$ and the vertices are decomposed into strongly connected components $\mathcal{Z}_1 = \{ v_1 \}$ and $\mathcal{Z}_2 = \{ v_2, v_3 \}$.

Graph sum and subtraction are defined by applying the multiset sum and subtraction operations to each edge of the graph. We have the following:
\[
\begin{tikzcd}[ampersand replacement=\&,execute at end picture={
        \draw[black, thin] 
            ([shift={(-5pt,-5pt)}]current bounding box.south west) 
            rectangle 
            ([shift={(5pt,5pt)}]current bounding box.north east);
            \node[above] at ([shift={(0,5pt)}]current bounding box.north) {$\mathcal{G}_1 \oplus \mathcal{G}_2 = (\mathcal{V}, \mathcal{E}_{\oplus}, \Omega, \phi_{\oplus})$};
    }]
v_1 \arrow[rr,"{\{\omega_1, \omega_1, \omega_2\}}"] \arrow[loop left,"\omega_1"] \arrow[rdd,"\omega_3"'] \& \& v_2 \arrow[ldd,xshift=0.4ex,yshift=-0.4ex,"{\{\omega_3, \omega_3, \omega_3\}}"] \\
\& \& \\
\& v_3 \arrow[ruu,xshift=-0.4ex,yshift=0.4ex,"{\{ \omega_2, \omega_2 \}}"] \&
\end{tikzcd} \hspace{1in} \begin{tikzcd}[ampersand replacement=\&,execute at end picture={
        \draw[black, thin] 
            ([shift={(-5pt,-5pt)}]current bounding box.south west) 
            rectangle 
            ([shift={(5pt,5pt)}]current bounding box.north east);
            \node[above] at ([shift={(0,5pt)}]current bounding box.north) {$\mathcal{G}_1 \ominus \mathcal{G}_2 = (\mathcal{V}, \mathcal{E}_{\ominus}, \Omega, \phi_{\ominus})$};
    }]
v_1 \arrow[rr,"\omega_2"] \arrow[loop left,"\omega_1"] \arrow[rdd,"\omega_3"'] \& \& v_2 \arrow[ldd,xshift=0.4ex,yshift=-0.4ex,"{\omega_3}"] \\
\& \& \\
\& v_3 \&
\end{tikzcd}
\]
Note that graph sum and subtraction can change the graph structure by adding or removing edges. For example, we have that the strongly connected components of $\mathcal{G}_1 \ominus \mathcal{G}_2$ are $\mathcal{Z}_1 = \{ v_1\}$, $\mathcal{Z}_2 =\{ v_2 \}$, and $\mathcal{Z}_3 = \{ v_3 \}$ since the removal of edge $e_{32}$ eliminates the directed cycle $v_2 \to v_3 \to v_2$, leaving no directed path from $v_3$ to $v_2$.
\end{example}

\section{Results}
\label{sec:results}

In this section, we introduce the ALT-graph of a chemical reaction network and a graph splitting algorithm for establishing a splitting $J=F-V$ which satisfies Theorem \ref{thm:ngm}. We note that the study of autocatalysis in chemical reaction networks has a rich history, with foundational work on self-organization and abiogenesis \cite{eigen1971selforganization,farmer1986autocatalytic,kauffman1986autocatalytic,hofbauer1981competition,hofbauer1984dynamics}. More recent work has focused on hypergraph representations \cite{hofbauer2002competitive,andersen2017chemical,joshi2021autocatalytic,craciun2022autocatalytic} and the cataloging of autocatalytic motifs \cite{blokhuis2020autocatalysis,andersen2021defining,golnik2026enumeration}. Our notion of autocatalysis is related to these approaches but has been developed independently.

\subsection{The Autocatalysis-Leak-Transition Graph (ALT-Graph)}
\label{sec:dependency}

\begin{table}[t]
\footnotesize
\centering
\begin{NiceTabular}{|cc|ccc|}[code-before = \rowcolors{2}{white}{gray!12}]
\toprule
 \begin{tabular}{c} $\mathcal{X}$-reduced \\ reaction ($R_k'$) \end{tabular} & \begin{tabular}{c}Jacobian \\ entry \end{tabular} & ALT-type &  \begin{tabular}{c} ALT-graph \\ edges \end{tabular} & \begin{tabular}{c} ALT-matrix \\ entry \end{tabular} \\
\midrule
\begin{tikzcd}[ampersand replacement=\&, baseline=(current bounding box.center)] X_i \arrow[r] \& 2X_i \end{tikzcd} &  $\begin{matrix} {}_i \\ {}_j \\ {}_{\ell} \end{matrix} \! \left[ \begin{matrix} \partial_i R_k \\ 0 \\ 0 \end{matrix} \right]$ &      \begin{tabular}{c} \textbf{Autocatalytic} \\ \textbf{(A)} \end{tabular}    & \begin{tikzcd}[ampersand replacement=\&, baseline=(current bounding box.center)] X_i \arrow[loop left,"{}_{R_k}"]  \end{tikzcd} & $\begin{bmatrix} \partial_i R_k \\ 0 \\ 0 \end{bmatrix}$ \\[15pt]
\begin{tikzcd}[ampersand replacement=\&, baseline=(current bounding box.center)] X_i \arrow[r] \& 0 \end{tikzcd} & $\begin{matrix} {}_i \\ {}_j \\ {}_{\ell} \end{matrix} \! \left[ \begin{matrix} -\partial_i R_k \\ 0 \\ 0 \end{matrix} \right]$ &      \begin{tabular}{c}  \textbf{Leak} \\ \textbf{(L)} \end{tabular}      & \begin{tikzcd}[ampersand replacement=\&, baseline=(current bounding box.center)] X_i \arrow[r,"{}_{R_k}"] \& 0 \end{tikzcd} & $\begin{bmatrix} -\partial_i R_k \\ 0 \\ 0 \end{bmatrix}$ \\[15pt]
\begin{tikzcd}[ampersand replacement=\&, baseline=(current bounding box.center)] X_i \arrow[r] \& X_j \end{tikzcd} & $\begin{matrix} {}_i \\ {}_j \\ {}_{\ell} \end{matrix} \! \left[ \begin{matrix} -\partial_i R_k \\ \partial_i R_k \\ 0 \end{matrix} \right]$ &    \begin{tabular}{c} \textbf{Transition}\\ \textbf{(T)} \end{tabular}     & \begin{tikzcd}[ampersand replacement=\&, baseline=(current bounding box.center)] X_i \arrow[r,"{}_{R_k}"] \& X_j \end{tikzcd} & $\begin{bmatrix} -\partial_i R_k \\ \partial_i R_k \\ 0 \end{bmatrix}$ \\[15pt]
\hline
\begin{tikzcd}[ampersand replacement=\&, baseline=(current bounding box.center)] X_i \arrow[r] \& X_i + X_j \end{tikzcd} &  $\begin{matrix} {}_i \\ {}_j \\ {}_{\ell} \end{matrix} \! \left[ \begin{matrix} 0 \\ \partial_i R_k \\ 0 \end{matrix} \right]$  &      \begin{tabular}{c} \textbf{Catalytic} \\ \textbf{Transition I} \\ \textbf{(CT-I)} \end{tabular}        & \begin{tikzcd}[ampersand replacement=\&, baseline=(current bounding box.center)] X_i \arrow[r,"{}_{R_k}"] \arrow[loop left,"{}_{R_k}"] \& X_j \end{tikzcd} &  $\begin{bmatrix} \partial_i R_k \\ 0 \\ 0 \end{bmatrix} + \begin{bmatrix} -\partial_i R_k \\ \partial_i R_k \\ 0 \end{bmatrix}$ \\[15pt]
\begin{tikzcd}[ampersand replacement=\&, baseline=(current bounding box.center)] X_i \arrow[r] \& 2X_j \end{tikzcd} &  $\begin{matrix} {}_i \\ {}_j \\ {}_{\ell} \end{matrix} \! \left[ \begin{matrix} -\partial_i R_k \\ 2\partial_i R_k \\ 0 \end{matrix} \right]$ &   \begin{tabular}{c} \textbf{Catalytic} \\ \textbf{Transition II} \\ \textbf{(CT-II)} \end{tabular}      & \begin{tikzcd}[ampersand replacement=\&, baseline=(current bounding box.center)] X_i \arrow[r,"{}_{{\{ R_k, R_k \}}}"] \arrow[loop left,"{}_{R_k}"] \& X_j \end{tikzcd} &  $\begin{bmatrix} \partial_i R_k \\ 0 \\ 0 \end{bmatrix} + 2\begin{bmatrix} -\partial_i R_k \\ \partial_i R_k \\ 0 \end{bmatrix}$  \\[15pt]
\begin{tikzcd}[ampersand replacement=\&, baseline=(current bounding box.center)] X_i \arrow[r] \& X_j + X_{\ell} \end{tikzcd} & $\begin{matrix} {}_i \\ {}_j \\ {}_{\ell} \end{matrix} \! \left[ \begin{matrix} -\partial_i R_k \\ \partial_i R_k \\ \partial_i R_k \end{matrix} \right]$ &    \begin{tabular}{c} \textbf{Catalytic} \\ \textbf{Transition III} \\ \textbf{(CT-III)} \end{tabular}      & \begin{tikzcd}[ampersand replacement=\&, baseline=(current bounding box.center)] X_j \& X_i \arrow[l,"{}_{R_k}"'] \arrow[loop below,"{}_{R_k}" right] \arrow[r,"{}_{R_k}"] \& X_\ell \end{tikzcd} & $\begin{bmatrix} \partial_i R_k \\ 0 \\ 0 \end{bmatrix} + \begin{bmatrix} -\partial_i R_k \\ \partial_i R_k \\ 0 \end{bmatrix} + \begin{bmatrix} -\partial_i R_k \\ 0 \\ \partial_i R_k \end{bmatrix}$\\[6pt]
\bottomrule
\end{NiceTabular}
\caption{Table of correspondences between reduced reactions $R'_{k} \in \mathcal{R}'$ of an integer-valued binary chemical reaction network (column 1, Definition \ref{def:crn}) and edges of an ALT-graph (column 4, Definition \ref{def:alt-graph}). In column 2, we give the reaction's contribution to the Jacobian of the corresponding chemical reaction system \eqref{eq:crs2} evaluated at the boundary steady state $c^* = (x^*, y^*)$. In column 5, we give the reaction's contribution to the ALT-matrix (see Definition \ref{def:alt-matrix}). In both cases, we let $\partial_i R_k = \frac{\partial R_{k}(x^*,y^*)}{\partial x_i}$.
\newline We classify the first three reactions in the ALT-graph as Autocatalytic (A), Leak (L), and Transition (T) and note that the three remaining reaction types can be represented as combinations of these three types of edges. The contribution of each reaction to the Jacobian (column 2) can be decomposed in the ALT-matrix (column 5) into a linear combination of all contributions of edges in the ALT-graph (column 4). Notably, the first three reaction types are native to mathematical epidemiology as well as mathematical biochemistry. Specifically, autocatalytic reactions can be interpreted as new infections, leaks as recovery events, and transitions as other changes of state, such as transitions between infectious or partially immune states. The lower three reaction types are native to mathematical biochemistry and significantly less common in epidemiology, although reactions of the fourth type can represent infection events between individuals of different types.}
\label{tab:jacobian}
\end{table}


The following is the central construction of this paper. The construction is summarized in Table \ref{tab:jacobian}.

\begin{definition}
\label{def:alt-graph}
Consider an integer-valued and binary chemical reaction network $(\mathcal{S}, \mathcal{R})$ with a siphon $\mathcal{X} \subseteq \mathcal{S}$ and corresponding $\mathcal{X}$-reduced network $(\mathcal{X}, \mathcal{R}')$. The \emph{autocatalysis-leak-transition graph} (or \emph{ALT-graph}) of $(\mathcal{S}, \mathcal{R})$ on $\mathcal{X}$ is the multiset-valued directed graph $\mathcal{G} = (\mathcal{V}, \mathcal{E}, \Omega, \phi)$ with: (i) vertex set $\mathcal{V} = \mathcal{X} \cup \{0\}$; (ii) edge set $\mathcal{E} \subseteq \mathcal{V} \times \mathcal{V}$; (iii) universal label set $\Omega = \mathcal{R}$; and (iv) minimal labeling function $\phi : \mathcal{V} \times \mathcal{V} \to \mathcal{M}(\Omega)$ with $\phi(e) \neq \emptyset$ if and only if $e \in \mathcal{E}$, satisfying the rules below for each reduced reaction $R'_k \in \mathcal{R}'$ where $R_k \in \mathcal{R}$ is the underlying reaction:
\begin{enumerate}
    \item[(a)] \textbf{Autocatalytic (A):} If $R'_k : X_i \to 2X_i$, then $R_k \in \phi(e_{ii})$;
    \item[(b)] \textbf{Leak (L):} if $R'_k : X_i \to 0$, then $R_k \in \phi(e_{i0})$;
    \item[(c)] \textbf{Transition (T):} if $R'_k : X_i \to X_j$, then $R_k \in \phi(e_{ij})$;
    \item[(d)] \textbf{Catalytic Transition I (CT-I):} if $R'_k : X_i \to X_i + X_j$, then $R_k \in \phi(e_{ii})$ and $R_k \in \phi(e_{ij})$;
    \item[(e)] \textbf{Catalytic Transition II (CT-II):} if $R'_k : X_i \to 2X_j$, then $R_k \in \phi(e_{ii})$ and $\{ R_k, R_k \} \subseteq \phi(e_{ij})$; and
    \item[(f)] \textbf{Catalytic Transition III (CT-III):} if $R'_k : X_i \to X_j + X_\ell$, then $R_k \in \phi(e_{ii})$, $R_k \in \phi(e_{ij})$, and $R_k \in \phi(e_{i\ell})$.
\end{enumerate}
where minimality means that $R_k \in \phi(e_{ij})$ only if required by rules (a)--(f) above. We say that the occurrence $R_k \in \phi(e_{ij})$ is an \emph{instance} of reaction $R_k$ in the ALT-graph.
\end{definition}

The ALT-graph partitions reactions into three fundamental components, based on their contribution to the Jacobian restricted to $\mathcal{X}$: autocatalytic edges ($e_{ii}$, $i\not=0$), leak edges ($e_{i0}$, $i\not=0$), and transition edges ($e_{ij}$, $i\not=j$, $i, j \not= 0$). The correspondence between individual reaction types in a chemical reaction network and their representation in the ALT-graph is given in Table \ref{tab:jacobian}.

Note that the conditions (a)--(f) in Definition \ref{def:alt-graph}---and the corresponding partition into autocatalytic, leak, and transition edges---being exhaustive depends strictly on the assumptions that the chemical reaction network is integer-valued, binary, and satisfies \textbf{(K1)}--\textbf{(K5)}. Specifically, the assumption that the network is integer-valued and binary allows us to restrict to consideration of sources and targets of the form $0$, $X_i$, $2X_i$, and $X_i+X_j$, while kinetic assumptions \textbf{(K3)} and \textbf{(K5)} allow us to remove consideration of reactions with sources $0$, $2X_i$, and $X_i + X_j$ as these reactions make no contribution to the Jacobian. This also implies that the ALT-graph never contains edges from $0$ so that we may not have edges $e_{00}$ or $e_{0i}$, $i \not= 0$. We note that most elementary reactions in applications are integer-valued and at most binary and that most common kinetic rate functions, such as mass-action kinetics, satisfy assumptions \textbf{(K1)}--\textbf{(K5)}. Consideration of more general stoichiometries and kinetic assumptions is left for future work.

The ALT-graph is related to the Jacobian $J$ through the following matrix structure.

\begin{definition}
\label{def:alt-matrix}
The \emph{autocatalysis-leak-transition matrix} (or ALT-matrix) of an ALT-graph $\mathcal{G} = (\mathcal{V},\mathcal{E},\Omega,\phi)$ is the matrix $L(\mathcal{G}) \in \mathbb{R}^{m \times m}$ with entries
\[
[L(\mathcal{G})]_{ji} = \begin{cases}
\displaystyle \sum_{R_k \in \phi(e_{ii})} \partial_i R_k - \sum_{R_k \in \phi(e_{i0})} \partial_i R_k -\sum_{\substack{p=1 \\ p \neq i}}^m \sum_{R_k \in \phi(e_{i p})} \partial_i R_k , \;\;\; & \text{if } i = j \\
\displaystyle \sum_{R_k \in \phi(e_{ij})} \partial_i R_k, & \text{if } i \neq j
\end{cases}
\]
where $\partial_i R_k := \frac{\partial R_k(x^*,y^*)}{\partial x_i}$ for $i,j = 1, \ldots, m$. Note that the summations over any $\phi(\cdot)$ are evaluated over multisets; that is, terms are summed according to their multiplicities, counting repeated elements as distinct terms in the sum. Note also that index conventions are transposed so that the edge $e_{ij}$ contributes to row $j$ and column $i$ of $L(\mathcal{G})$.
\end{definition}


The following result formalizes the connection between the ALT-graph and the Jacobian $J$. The proof is contained in Appendix \ref{app:b}.

\begin{theorem}
\label{thm:jacobian}
Consider an integer-valued and binary chemical reaction network $(\mathcal{S},\mathcal{R})$ satisfying kinetic assumptions \textbf{(K1)--(K5)}. Suppose the corresponding chemical reaction system \eqref{eq:crs} has a boundary steady state $c^* \in \partial \mathbb{R}_{\geq 0}^n$ with corresponding siphon $\mathcal{X} \subseteq \mathcal{S}$. Let $\mathcal{G} = (\mathcal{V}, \mathcal{E}, \Omega, \phi)$ be the ALT-graph of $(\mathcal{S},\mathcal{R})$ on $\mathcal{X}$ (Definition \ref{def:alt-graph}). Then $L(\mathcal{G})$ coincides with the Jacobian $J$ of the $\mathcal{X}$-reduced system \eqref{eq:crs2}, evaluated at $c^*$.
\end{theorem}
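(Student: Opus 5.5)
Both $J$ and $L(\mathcal{G})$ are sums of per-reaction contributions, so I will prove the theorem reaction by reaction. On the Jacobian side, $f(x,y)=\Gamma' R(x,y)$ gives
\[
J_{ji} = \sum_{k=1}^r \Gamma'_{jk}\,\partial_i R_k = \sum_{k=1}^r (\beta'_{jk}-\alpha'_{jk})\,\partial_i R_k .
\]
Reactions with $\Gamma'_{\cdot k}=0$ are not in $\mathcal{R}'$ and contribute nothing, so $J$ is a sum over $R'_k\in\mathcal{R}'$ of rank-structured contributions $J^{(k)}$. On the ALT-matrix side, Definition \ref{def:alt-matrix} is linear in the multiset labels, so $L(\mathcal{G})=\sum_k L^{(k)}$. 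Here $L^{(k)}$ is the matrix built from the instances of $R_k$ alone. By minimality of $\phi$, these instances are exactly those dictated by rules (a)--(f). It therefore suffices to show $J^{(k)}=L^{(k)}$ for each $R'_k\in\mathcal{R}'$.

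The first step is to determine which sources can contribute to $J^{(k)}$ at all. Since the network is integer-valued and binary, the reduced source $\sum_i\alpha'_{ik}X_i$ is one of $0$, $X_i$, $2X_i$, or $X_i+X_j$.
\begin{itemize}
\item If the source is $0$, then $\alpha'_{ik}=0$ for every species in $\mathcal{X}$, so \textbf{(K3)} gives $\partial_i R_k=0$ for all $i$.
\item If the source is $2X_i$, then $\alpha_{ik}-\delta_{ii}=1>0$ and $x_i^*=0$, so \textbf{(K5)} gives $\partial_iR_k=0$. By \textbf{(K3)}, all other partials vanish as well.
\item If the source is $X_i+X_j$ with $i\neq j$, then for the partial in $x_i$ the other reactant $X_j$ has $x_j^*=0$, so \textbf{(K5)} gives zero. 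The partial in $x_j$ vanishes by the same argument.
\end{itemize}
In each of these cases $J^{(k)}=0$. These reactions are also not among the reaction types in rules (a)--(f), so $L^{(k)}=0$ too. One should check here that, for a species of $\mathcal{Y}$ present in the source, $y^*>0$ does not interfere: \textbf{(K5)} only requires some absent co-reactant, and such a co-reactant exists in all three cases.

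This leaves source $X_i$, whose only nonzero partial is $\partial_iR_k$, in column $i$. The target is $0$, $2X_i$, $X_j$, $X_i+X_j$, $2X_j$, or $X_j+X_\ell$. The target $X_i$ is excluded because $R'_k$ must change some species. A binary target of any other form, such as $X_i+X_j$ with repeated indices, reduces to one of these six. For each of the six cases I will write column $i$ of $J^{(k)}$ as $\Gamma'_{\cdot k}\partial_iR_k$ and compare it with the column produced by Definition \ref{def:alt-matrix}. In that definition, a loop contributes $+\partial_iR_k$ to the diagonal. A leak contributes $-\partial_iR_k$ to the diagonal. Each instance in $\phi(e_{ip})$ with $p\ne i$ contributes $+\partial_iR_k$ to row $p$ and $-\partial_iR_k$ to the diagonal. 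The six checks are the entries of Table \ref{tab:jacobian}:
\begin{itemize}
\item CT-II: $1-2=-1$ on the diagonal and $2$ in row $j$.
\item CT-III: $1-1-1=-1$ on the diagonal and $+1$ in rows $j$ and $\ell$.
\item CT-I: $1-1=0$ on the diagonal and $+1$ in row $j$.
\item Cases (a)--(c): immediate.
\end{itemize}

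The main obstacle is not the arithmetic but exhaustiveness and bookkeeping. I must verify that the case split over binary integer sources and targets is complete. I must also handle multiplicities correctly, counting the double instance in CT-II twice and summing over the multiset. Finally, I must keep the transposed index convention straight, since $e_{ij}$ feeds row $j$ and column $i$. Once each $J^{(k)}=L^{(k)}$ is established, summing over $k$ gives $L(\mathcal{G})=J$.
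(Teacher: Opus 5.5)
Your proposal is correct and follows essentially the same route as the paper. Both proofs discard the reduced sources $0$, $2X_i$ and $X_i+X_j$ using \textbf{(K3)} and \textbf{(K5)}, then check the six reaction types (a)--(f) of Table~\ref{tab:jacobian} one by one, counting the double instance in CT-II twice. The only difference is bookkeeping: you match per-reaction column contributions $J^{(k)}=L^{(k)}$ and sum over $k$, whereas the paper matches off-diagonal entries (multiplicity $\beta'_{jk}$ in $\phi(e_{ij})$) and diagonal entries (coefficient $\beta'_{ik}-1$) separately, and your version is slightly more explicit about why the case split is exhaustive.
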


\begin{example}
\label{ex:1003}

\begin{table}[t]
\centering
\begin{NiceTabular}{ccccccc}[code-before = \rowcolors{2}{white}{gray!12}]
\toprule
\begin{tabular}{c}Reaction\\ label \end{tabular} & & \begin{tabular}{c}Chemical \\ reaction \end{tabular} & & \begin{tabular}{c}$\mathcal{X}$-reduced\\reaction\end{tabular} & & \begin{tabular}{c} ALT-graph\\edge(s)\end{tabular} \\
\midrule
$R_1:$ & & \begin{tikzcd}[ampersand replacement=\&, baseline=(current bounding box.center)] X_1 + Y_2 \arrow[r] \& X_2 \end{tikzcd} & $\Longrightarrow$ & \begin{tikzcd}[ampersand replacement=\&, baseline=(current bounding box.center)] X_1 \arrow[r] \& X_2 \end{tikzcd} & $\Longrightarrow$ & \begin{tikzcd}[ampersand replacement=\&, baseline=(current bounding box.center)] X_1 \arrow[r,"{}_{R_1}"] \& X_2 \end{tikzcd} \\[0pt]
$R_2:$ & & \begin{tikzcd}[ampersand replacement=\&, baseline=(current bounding box.center)] X_2 \arrow[r] \& X_1 + Y_2 \end{tikzcd} & $\Longrightarrow$ & \begin{tikzcd}[ampersand replacement=\&, baseline=(current bounding box.center)] X_2 \arrow[r] \& X_1 \end{tikzcd} & $\Longrightarrow$ & \begin{tikzcd}[ampersand replacement=\&, baseline=(current bounding box.center)] X_2 \arrow[r,"{}_{R_2}"] \& X_1 \end{tikzcd} \\[0pt]
$R_3:$ & & \begin{tikzcd}[ampersand replacement=\&, baseline=(current bounding box.center)] X_2 \arrow[r] \& Y_1 + Y_2 \end{tikzcd} & $\Longrightarrow$ & \begin{tikzcd}[ampersand replacement=\&, baseline=(current bounding box.center)] X_2 \arrow[r] \& 0 \end{tikzcd} & $\Longrightarrow$ & \begin{tikzcd}[ampersand replacement=\&, baseline=(current bounding box.center)] X_2 \arrow[r,"{}_{R_3}"] \& 0 \end{tikzcd} \\[0pt]
$R_4:$ & & \begin{tikzcd}[ampersand replacement=\&, baseline=(current bounding box.center)] X_2 + Y_1\arrow[r] \& X_3 \end{tikzcd} & $\Longrightarrow$ & \begin{tikzcd}[ampersand replacement=\&, baseline=(current bounding box.center)] X_2 \arrow[r] \& X_3 \end{tikzcd} & $\Longrightarrow$ & \begin{tikzcd}[ampersand replacement=\&, baseline=(current bounding box.center)] X_2 \arrow[r,"{}_{R_4}"] \& X_3 \end{tikzcd} \\[0pt]
$R_5:$ & & \begin{tikzcd}[ampersand replacement=\&, baseline=(current bounding box.center)] X_3 \arrow[r] \& X_2 + Y_1 \end{tikzcd} & $\Longrightarrow$ & \begin{tikzcd}[ampersand replacement=\&, baseline=(current bounding box.center)] X_3 \arrow[r] \& X_2 \end{tikzcd} & $\Longrightarrow$ & \begin{tikzcd}[ampersand replacement=\&, baseline=(current bounding box.center)] X_3 \arrow[r,"{}_{R_5}"] \& X_2 \end{tikzcd} \\[0pt]
$R_6:$ & & \begin{tikzcd}[ampersand replacement=\&, baseline=(current bounding box.center)] X_3 \arrow[r] \& X_1 + X_2 \end{tikzcd} & $\Longrightarrow$ & \begin{tikzcd}[ampersand replacement=\&, baseline=(current bounding box.center)] X_3 \arrow[r] \& X_1 + X_2 \end{tikzcd} & $\Longrightarrow$ & \begin{tikzcd}[ampersand replacement=\&, baseline=(current bounding box.center)] X_1 \& X_3 \arrow[l,"{}_{R_6}"'] \arrow[loop below,"{}_{R_6}" right] \arrow[r,"{}_{R_6}"] \& X_2 \end{tikzcd} \\
\bottomrule
\end{NiceTabular}
\caption{Shown above are the steps which take each reaction (left) from the single-site bifunctional enzyme network \eqref{eq:bifunctional2} to the corresponding ALT-graph edges (right). First, we construct the $\mathcal{X}$-reduced edge (middle) by removing the species not in the set $\mathcal{X} = \{ X_1, X_2 , X_3 \}$. We then construct the ALT-graph edges (right) by noting that $R_3$ is a leak edge, $R_6$ is a Catalytic transition (type III) edge, and the rest are transition edges. We apply the corresponding rules given in Table \ref{tab:jacobian}. Combining the ALT-graph edges (right) to a single graph on the vertices $\mathcal{X} \cup \{ 0 \}$ gives the ALT-graph \eqref{eq:ex4}.}
\label{tab:jacobian1}
\end{table}

Reconsider the network \eqref{eq:bifunctional2} in Example \ref{ex:1000}, \ref{ex:1001}, and \ref{ex:1002}. We now construct the ALT-graph. We start by constructing the $\mathcal{X}$-reduced network according to Definition \ref{def:reduced-graph}. We then construct the ALT-graph by applying the rules in Definition \ref{def:alt-graph}. These transitions are given in Table \ref{tab:jacobian1}. Combining these edges gives the following ALT-graph:
\begin{equation}
\label{eq:ex4}
\begin{tikzcd}[ampersand replacement=\&,execute at end picture={
        \draw[black, thin] 
            ([shift={(-5pt,-5pt)}]current bounding box.south west) 
            rectangle 
            ([shift={(5pt,5pt)}]current bounding box.north east);
            \node[above] at ([shift={(0,5pt)}]current bounding box.north) {$\mathcal{G} = (\mathcal{V}, \mathcal{E}, \Omega, \phi)$};
    }]
            X_1 \arrow[rr,yshift=0.5ex,"R_1"] \& \& X_2 \arrow[ll,yshift=-0.5ex,"R_2"] \arrow[ddl,xshift=-0.4ex,yshift=0.4ex,"R_4"']\arrow[rr,"R_3"] \& \& 0 \\ \\
              \& X_3 \arrow[uur,xshift=0.4ex,yshift=-0.4ex,"{\{R_5,R_6\}}"'] \arrow[uul,"R_6"] \arrow[loop right,"R_6"] \& \& \& 
            \end{tikzcd}
\end{equation}
We notice that there are three instances of $R_6 \in \mathcal{R}$ in the ALT-graph, specifically, $R_6 \in \phi(e_{31})$, $R_6 \in \phi(e_{32})$, and $R_6 \in  \phi(e_{33})$. All elements of $\mathcal{X}$ are connected by directed edges (e.g., $X_3$ is connected to $X_1$ through $e_{31}$ and $X_1$ is connected to $X_3$ by $e_{12}$ and $e_{23}$) so that $\mathcal{X}$ is a strongly connected component. The ALT-matrix (Definition \ref{def:alt-matrix}) of the ALT-graph \eqref{eq:ex4} coincides with the Jacobian matrix \eqref{eq:ex8}, which can be obtained by direct calculation but is guaranteed by Theorem \ref{thm:jacobian}.
\end{example}

\subsection{Critical Autocatalytic Reactions and Strongly Connected Components}

In order to apply the next-generation matrix method to biochemical examples, we need to understand how reactions are embedded into the edges of the ALT-graph. We introduce the following terminology.

\begin{definition}
\label{def:autocatalytic}
Consider an integer-valued and binary chemical reaction network $(\mathcal{S}, \mathcal{R})$ with siphon $\mathcal{X} \subseteq \mathcal{S}$, and ALT-graph $\mathcal{G} = (\mathcal{V}, \mathcal{E}, \Omega, \phi)$ with strongly connected components $\mathcal{Z} = \{\mathcal{Z}_1, \dots, \mathcal{Z}_c\}$.
\begin{enumerate}
    \item A reaction $R_k \in \mathcal{R}$ is \emph{autocatalytic} if $R_k \in \phi(e_{ii})$ for some $X_i$. Given an autocatalytic reaction $R_k \in \mathcal{R}_A$, an occurrence $R_k \in \phi(e_{ii})$ is an \emph{autocatalytic instance} while an occurrence $R_k \in \phi(e_{ij})$, $i \neq j$, is a \emph{catalytic instance}. Note that we allow $j=0$ for catalytic instances. The set of autocatalytic reactions is denoted $\mathcal{R}_A \subseteq \mathcal{R}$.
    \item An autocatalytic reaction $R_k \in \mathcal{R}_A$ is \emph{critical} if it does not have a catalytic instance $R_k \in \phi(e_{ij})$ satisfying $X_i \in \mathcal{Z}_h$ and $X_j \not\in \mathcal{Z}_h$ for some strongly connected component $\mathcal{Z}_h$. The set of critical autocatalytic reactions is denoted $\mathcal{R}_C \subseteq \mathcal{R}_A$.
    \item A strongly connected component $\mathcal{Z}_h \subseteq \mathcal{X}$ is \emph{critical} if $\mathcal{Z}_h$ can be disconnected from $0$ (i.e., there is no directed path from $\mathcal{Z}_h$ to $0$) by removing at most one catalytic instance $R_k \in \phi(e_{ij})$ satisfying $X_i \in \mathcal{Z}_h$ and $X_j \not\in \mathcal{Z}_h$ of each autocatalytic reaction $R_k \in \mathcal{R}_A$. The set of critical components is denoted $\mathcal{Z}_C \subseteq \mathcal{Z}$.
\end{enumerate}
\end{definition}

Note that every instance of an autocatalytic reaction $R_k \in \mathcal{R}_A$ is either an autocatalytic or catalytic instance by Definition \ref{def:alt-graph}. Also note that a strongly connected component is trivially critical if it is disconnected from $0$. The notions of critical autocatalytic reactions and critical strongly connected components are important as they prevent the corresponding ALT-matrix (Definition \ref{def:alt-matrix}) from satisfying the conditions of Theorem \ref{thm:ngm2}. Specifically, critical autocatalytic reactions prevent $V$ from satisfying the property that $\mathbbm{1}^T A_i$ is nonnegative and nonzero, while critical strongly connected components prevent $V$ from being invertible (see the examples in Appendix \ref{app:f} and \ref{app:g}). The term critical is used to indicate that the object in question requires resolution in our analysis. 

To illustrate the concepts of autocatalytic reactions and criticality, we reconsider our earlier example.

\begin{example}
\label{ex:1004}
Reconsider the network \eqref{eq:bifunctional} in Examples \ref{ex:1000}, \ref{ex:1001}, \ref{ex:1002}, and \ref{ex:1003}. We now assess criticality in the ALT-graph \eqref{eq:ex4} according to Definition \ref{def:autocatalytic}. 
The set of autocatalytic reactions is $\mathcal{R}_A = \{ R_6 \}$ since $R_6 \in \phi(e_{33})$. $R_6 \in \phi(e_{33})$ is an autocatalytic instance while the instances $R_6 \in \phi(e_{31})$ and $R_6 \in \phi(e_{32})$ are catalytic instances. The ALT-graph has strongly connected components given by $\mathcal{Z}_1 = \{X_1, X_2, X_3\}$ and the trivial sink component $\mathcal{Z}_2 = \{ 0 \}$. Since neither catalytic instance $R_6 \in \phi(e_{31})$ nor $R_6 \in \phi(e_{32})$ satisfies $X_j \not\in \mathcal{Z}_1$, we have that the critical reaction set is $\mathcal{R}_C = \{ R_6 \}$. 
Furthermore, since $R_6 \in \mathcal{R}_C$ has no catalytic instances which leave $\mathcal{Z}_1$, there is no way to remove a set of at most one catalytic instance of an autocatalytic reaction which could disconnect $\mathcal{Z}_1$ from $0$. Consequently, there are no critical strongly connected components so that $\mathcal{Z}_C = \emptyset$.

\end{example}

\subsection{Splittings and Decompositions}

In this section, we introduce an ALT-graph-based technique for performing the splitting $J=F-V$ which satisfies Theorem \ref{thm:ngm}. 

\begin{definition}
\label{def:splitting}
Consider an integer-valued and binary chemical reaction network $(\mathcal{S}, \mathcal{R})$ with a siphon $\mathcal{X} \subseteq \mathcal{S}$, and corresponding ALT-graph $\mathcal{G} = (\mathcal{V}, \mathcal{E}, \Omega, \phi)$. A pair of ALT-graphs, $\mathcal{G}_F = (\mathcal{V}, \mathcal{E}_F, \Omega, \phi_F)$ and $\mathcal{G}_{-V} = (\mathcal{V}, \mathcal{E}_{-V}, \Omega, \phi_{-V})$, is said to be a \emph{splitting} of $\mathcal{G}$ if $\mathcal{G} = \mathcal{G}_F \oplus \mathcal{G}_{-V}$. Furthermore, a splitting $\mathcal{G} = \mathcal{G}_F \oplus \mathcal{G}_{-V}$ is called:
    \begin{enumerate}
        \item an \emph{autocatalytic splitting} of $\mathcal{G}$ if $\mathcal{G}_F$ contains only a single instance of an autocatalytic reaction $R_k \in \mathcal{R}_A$, i.e., $\phi_F(e_{ii}) = \{R_k\}$ for some $X_i \in \mathcal{X}$, and $\phi_F(e) = \emptyset$ for all $e \in \mathcal{V} \times \mathcal{V} \setminus \{e_{ii}\}$.
        
        \item a \emph{catalytic splitting} of $\mathcal{G}$ if $\mathcal{G}_F$ contains only a single pair of autocatalytic and catalytic instances of an autocatalytic reaction $R_k \in \mathcal{R}_A$, i.e., $\phi_F(e_{ii}) = \{R_k\}$ and $\phi_F(e_{ij}) = \{R_k\}$ for some $X_i, X_j \in \mathcal{X}, i \neq j$, and $\phi_F(e) = \emptyset$ for all $e \in \mathcal{V} \times \mathcal{V} \setminus \{e_{ii}, e_{ij}\}$.
        \item an \emph{autocatalytic decomposition} of $\mathcal{G}$ if there is a sequence of splittings $\mathcal{G}_{-V_i} = \mathcal{G}_{F_{i+1}} \oplus \mathcal{G}_{-V_{i+1}}$, $i = 0, \ldots, p-1$, where each splitting is either autocatalytic or catalytic, with $\mathcal{G}_F = \mathcal{G}_{F_1} \oplus \cdots \oplus \mathcal{G}_{F_p}$, $\mathcal{G} = \mathcal{G}_{-V_0}$, and $\mathcal{G}_{-V} = \mathcal{G}_{-V_p}$. Note that an autocatalytic decomposition can be written as the following:
        \begin{equation}
        \label{eq:decomposition}
        \mathcal{G} = \mathcal{G}_{F_1} \oplus \mathcal{G}_{-V_1} = \mathcal{G}_{F_1} \oplus ( \mathcal{G}_{F_2} \oplus \mathcal{G}_{-V_2}) = \cdots = (\mathcal{G}_{F_1} \oplus \cdots \oplus \mathcal{G}_{F_p} ) \oplus \mathcal{G}_{-V_p} = \mathcal{G}_F \oplus \mathcal{G}_{-V}.
        \end{equation}
    \end{enumerate}
\end{definition}
\noindent We use the notation $\mathcal{G}_F$ and $\mathcal{G}_{-V}$ due to the correspondence with the matrices $F$ and $-V$ in the Jacobian decomposition $J = F- V$. 

In practice, we iteratively perform autocatalytic and catalytic splittings to obtain an autocatalytic decomposition $\mathcal{G} = \mathcal{G}_F \oplus \mathcal{G}_{-V}$ \eqref{eq:decomposition} satisfying special properties. 
In Section \ref{sec:mainresult}, we provide conditions on the final graphs $\mathcal{G}_{F}$ and $\mathcal{G}_{-V}$ that guarantee the assumptions of Theorem \ref{thm:ngm2} are satisfied.

An immediate consequence of Definitions \ref{def:alt-matrix} and the graph sum operation (Definition \ref{def:directedgraph}) is the following, which allows us to relate autocatalytic decompositions $\mathcal{G} = \mathcal{G}_F \oplus \mathcal{G}_{-V}$ to the Jacobian splitting $J = F - V$. The proof is contained in Appendix \ref{app:c}.

\begin{lemma}\label{lemma:multiset}
Consider two ALT-graphs, $\mathcal{G}_1 = (\mathcal{V}, \mathcal{E}_1, \Omega, \phi_1)$ and $\mathcal{G}_2 = (\mathcal{V}, \mathcal{E}_2, \Omega, \phi_2)$ on the same vertex set $\mathcal{V}=\mathcal{X}\cup\{0\}$ and universal label set $\Omega = \mathcal{R}$, with corresponding ALT-matrices $L(\mathcal{G}_1)$ and $L(\mathcal{G}_2)$, respectively. Then $L(\mathcal{G}_1 \oplus \mathcal{G}_2) = L(\mathcal{G}_1) + L(\mathcal{G}_2)$.
\end{lemma}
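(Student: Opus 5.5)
The proof is a direct entrywise computation. The key observation is that every entry of the ALT-matrix in Definition \ref{def:alt-matrix} is a linear functional of the labeling function, evaluated edge by edge. I will make this precise by rewriting each multiset sum in terms of multiplicities: for a multiset $\phi(e)$ over $\Omega = \mathcal{R}$,
\[
\sum_{R_k \in \phi(e)} \partial_i R_k = \sum_{k=1}^r m_{\phi(e)}(R_k)\, \partial_i R_k .
\]
This identity is precisely the convention stated after Definition \ref{def:alt-matrix}, under which repeated elements are counted according to their multiplicities. With this rewriting, the dependence on $\phi$ enters only through the nonnegative integers $m_{\phi(e)}(R_k)$.

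Next I will use the graph sum from Definition \ref{def:directedgraph}. For every pair $e \in \mathcal{V}\times\mathcal{V}$ it gives $\phi_\oplus(e) = \phi_1(e)\oplus\phi_2(e)$. By Definition \ref{def:multiset}, the multiplicities add:
\[
m_{\phi_\oplus(e)}(R_k) = m_{\phi_1(e)}(R_k) + m_{\phi_2(e)}(R_k).
\]
Substituting this into the rewritten sums shows that each edge sum is additive:
\[
\sum_{R_k\in\phi_\oplus(e)}\partial_iR_k = \sum_{R_k\in\phi_1(e)}\partial_iR_k + \sum_{R_k\in\phi_2(e)}\partial_iR_k .
\]
This holds for every ordered pair $e$, including pairs not in $\mathcal{E}_1$ or $\mathcal{E}_2$, where the corresponding sum is empty and equals zero.

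I will then verify the claim entry by entry in two cases.

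\textbf{Off-diagonal entries ($i\neq j$).} Here $[L(\mathcal{G})]_{ji}$ is a single edge sum over $\phi(e_{ij})$, so additivity gives the result immediately.

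\textbf{Diagonal entries ($i=j$).} Here $[L(\mathcal{G})]_{ii}$ is a signed finite combination of edge sums:
\[
[L(\mathcal{G})]_{ii} = (\text{sum over } e_{ii}) - (\text{sum over } e_{i0}) - \sum_{p\neq i} (\text{sum over } e_{ip}),
\]
with coefficients $+1, -1, -1$ that do not depend on $\phi$. Applying additivity to each term and regrouping gives $[L(\mathcal{G}_1\oplus\mathcal{G}_2)]_{ii} = [L(\mathcal{G}_1)]_{ii}+[L(\mathcal{G}_2)]_{ii}$.

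There is one subtlety to address. Both $\mathcal{G}_1$ and $\mathcal{G}_2$ are evaluated with the same partial derivatives $\partial_i R_k$ at the common point $(x^*,y^*)$, because they share the vertex set and label set $\Omega=\mathcal{R}$. Since the formula in Definition \ref{def:alt-matrix} never invokes the minimality condition of Definition \ref{def:alt-graph}, it applies verbatim to any multiset-valued graph on $\mathcal{V}$, in particular to $\mathcal{G}_1\oplus\mathcal{G}_2$.

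I expect no real obstacle. The only care needed is to ensure that multiset sums are interpreted with multiplicity, so that, for example, the label $\{R_k,R_k\}$ from CT-II contributes $2\partial_iR_k$. The multiplicity rewriting in the first step handles this uniformly.
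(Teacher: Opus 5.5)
Your proposal is correct and follows essentially the same entrywise argument as the paper: an off-diagonal case and a diagonal case, each using that multiset sums count multiplicities and that these multiplicities add under $\oplus$. Your explicit rewriting via $m_{\phi(e)}(R_k)$ makes precise the paper's ``counting repeated labels with multiplicity'' step, and your remark that the ALT-matrix formula never uses minimality is a worthwhile clarification the paper leaves implicit. That remark is what lets the formula apply to $\mathcal{G}_1\oplus\mathcal{G}_2$ and to the pieces of a splitting.
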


To illustrate autocatalytic and catalytic decompositions, consider the following example.

\begin{example}
\label{ex:1005}
Reconsider the chemical reaction network \eqref{eq:bifunctional} considered in Examples \ref{ex:1000}, \ref{ex:1001}, \ref{ex:1002}, \ref{ex:1003}, and \ref{ex:1004}. There are several splittings of $\mathcal{G} = (\mathcal{V}, \mathcal{E}, \Omega, \phi)$, some autocatalytic and some catalytic.

\paragraph{Splitting \#1 (Autocatalytic):} We have the following autocatalytic splitting of $\mathcal{G}$:
\begin{equation}
\label{eq:1006-1} \scriptsize
\begin{tikzcd}[ampersand replacement=\&,execute at end picture={
        \draw[black, thin] 
            ([shift={(-5pt,-5pt)}]current bounding box.south west) 
            rectangle 
            ([shift={(5pt,5pt)}]current bounding box.north east);
            \node[above] at ([shift={(0,5pt)}]current bounding box.north) {$\mathcal{G} = (\mathcal{V}, \mathcal{E}, \Omega, \phi)$};
    }]
            X_1 \arrow[rr,yshift=0.5ex,"R_1"] \& \& X_2 \arrow[ll,yshift=-0.5ex,"R_2"] \arrow[ddl,xshift=-0.4ex,yshift=0.4ex,"R_4"']\arrow[rr,"R_3"] \& \& 0 \\ \\
              \& X_3 \arrow[uur,xshift=0.4ex,yshift=-0.4ex,"{\{R_5,R_6\}}"'] \arrow[uul,"R_6"] \arrow[loop right,"R_6"] \& \& \& 
            \end{tikzcd}
            \; = \;
            \begin{tikzcd}[ampersand replacement=\&,execute at end picture={
        \draw[black, thin] 
            ([shift={(-5pt,-5pt)}]current bounding box.south west) 
            rectangle 
            ([shift={(5pt,5pt)}]current bounding box.north east);
            \node[above] at ([shift={(0,5pt)}]current bounding box.north) {$\mathcal{G}_F = (\mathcal{V}, \mathcal{E}_F, \Omega, \phi_F)$};
    }]
            X_1 \arrow[rr,yshift=0.5ex,white,"R_1"]\& \& X_2 \& \& 0 \\ \\
              \& X_3 \arrow[loop right,"R_6"] \& \& \& 
            \end{tikzcd}
            \; \oplus \;
            \begin{tikzcd}[ampersand replacement=\&,execute at end picture={
        \draw[black, thin] 
            ([shift={(-5pt,-5pt)}]current bounding box.south west) 
            rectangle 
            ([shift={(5pt,5pt)}]current bounding box.north east);
            \node[above] at ([shift={(0,5pt)}]current bounding box.north) {$\mathcal{G}_{-V} = (\mathcal{V}, \mathcal{E}_{-V}, \Omega, \phi_{-V})$};
    }
]
            X_1 \arrow[rr,yshift=0.5ex,"R_1"] \& \& X_2 \arrow[ll,yshift=-0.5ex,"R_2"] \arrow[ddl,xshift=-0.4ex,yshift=0.4ex,"R_4"']\arrow[rr,"R_3"] \& \& 0 \\ \\
              \& X_3 \arrow[uur,xshift=0.4ex,yshift=-0.4ex,"{\{R_5,R_6\}}"'] \arrow[uul,"R_6"]  \& \& \& 
            \end{tikzcd}
\end{equation}
The splitting is autocatalytic because $\mathcal{G}_F$ contains a single autocatalytic ($R_6 \in \phi(e_{33})$) instance of the autocatalytic reaction $R_6 \in \mathcal{R}_A$. Note that an instance of a reaction may be autocatalytic or catalytic in the original ALT-graph without being autocatalytic or catalytic in the ALT-graph it is assigned to in the splitting. For example, the instance $R_6 \in \phi(e_{31})$ is catalytic in $\mathcal{G}$ but not in $\mathcal{G}_{-V}$ because there are no autocatalytic instance $R_6 \in \phi(e_{33})$ in that network. 

\paragraph{Splitting \#2 (Catalytic):} We also have the following catalytic splitting of $\mathcal{G}$:
\begin{equation}
\label{eq:1006-2}
 \scriptsize
\begin{tikzcd}[ampersand replacement=\&,execute at end picture={
        \draw[black, thin] 
            ([shift={(-5pt,-5pt)}]current bounding box.south west) 
            rectangle 
            ([shift={(5pt,5pt)}]current bounding box.north east);
            \node[above] at ([shift={(0,5pt)}]current bounding box.north) {$\mathcal{G} = (\mathcal{V}, \mathcal{E}, \Omega, \phi)$};
    }]
            X_1 \arrow[rr,yshift=0.5ex,"R_1"] \& \& X_2 \arrow[ll,yshift=-0.5ex,"R_2"] \arrow[ddl,xshift=-0.4ex,yshift=0.4ex,"R_4"']\arrow[rr,"R_3"] \& \& 0 \\ \\
              \& X_3 \arrow[uur,xshift=0.4ex,yshift=-0.4ex,"{\{R_5,R_6\}}"'] \arrow[uul,"R_6"] \arrow[loop right,"R_6"] \& \& \& 
            \end{tikzcd}
            \; = \;
            \begin{tikzcd}[ampersand replacement=\&,execute at end picture={
        \draw[black, thin] 
            ([shift={(-5pt,-5pt)}]current bounding box.south west) 
            rectangle 
            ([shift={(5pt,5pt)}]current bounding box.north east);
            \node[above] at ([shift={(0,5pt)}]current bounding box.north) {$\mathcal{G}_F = (\mathcal{V}, \mathcal{E}_F, \Omega, \phi_F)$};
    }]
            X_1 \arrow[rr,yshift=0.5ex,white,"R_1"]\& \& X_2 \& \& 0 \\ \\
              \& X_3 \arrow[uur,"R_6"'] \arrow[loop right,"R_6"] \& \& \& 
            \end{tikzcd}
            \; \oplus \;
            \begin{tikzcd}[ampersand replacement=\&,execute at end picture={
        \draw[black, thin] 
            ([shift={(-5pt,-5pt)}]current bounding box.south west) 
            rectangle 
            ([shift={(5pt,5pt)}]current bounding box.north east);
            \node[above] at ([shift={(0,5pt)}]current bounding box.north) {$\mathcal{G}_{-V} = (\mathcal{V}, \mathcal{E}_{-V}, \Omega, \phi_{-V})$};
    }
]
            X_1 \arrow[rr,yshift=0.5ex,"R_1"] \& \& X_2 \arrow[ll,yshift=-0.5ex,"R_2"] \arrow[ddl,xshift=-0.4ex,yshift=0.4ex,"R_4"']\arrow[rr,"R_3"] \& \& 0 \\ \\
              \& X_3 \arrow[uur,xshift=0.4ex,yshift=-0.4ex,"{R_5}"'] \arrow[uul,"R_6"]  \& \& \& 
            \end{tikzcd}
\end{equation}
This splitting is catalytic because $\mathcal{G}_F$ contains both an autocatalytic instance ($R_6 \in \phi(e_{33})$) and a catalytic instance ($R_6 \in \phi(e_{32})$) of the autocatalytic reaction $R_6 \in \mathcal{R}_A$.

\paragraph{Splitting \#3 (Catalytic):} A second catalytic splitting is the following:
\begin{equation}
\label{eq:1006-3}
\scriptsize
\begin{tikzcd}[ampersand replacement=\&,execute at end picture={
        \draw[black, thin] 
            ([shift={(-5pt,-5pt)}]current bounding box.south west) 
            rectangle 
            ([shift={(5pt,5pt)}]current bounding box.north east);
            \node[above] at ([shift={(0,5pt)}]current bounding box.north) {$\mathcal{G} = (\mathcal{V}, \mathcal{E}, \Omega, \phi)$};
    }]
            X_1 \arrow[rr,yshift=0.5ex,"R_1"] \& \& X_2 \arrow[ll,yshift=-0.5ex,"R_2"] \arrow[ddl,xshift=-0.4ex,yshift=0.4ex,"R_4"']\arrow[rr,"R_3"] \& \& 0 \\ \\
              \& X_3 \arrow[uur,xshift=0.4ex,yshift=-0.4ex,"{\{R_5,R_6\}}"'] \arrow[uul,"R_6"] \arrow[loop right,"R_6"] \& \& \& 
            \end{tikzcd}
            \; = \;
            \begin{tikzcd}[ampersand replacement=\&,execute at end picture={
        \draw[black, thin] 
            ([shift={(-5pt,-5pt)}]current bounding box.south west) 
            rectangle 
            ([shift={(5pt,5pt)}]current bounding box.north east);
            \node[above] at ([shift={(0,5pt)}]current bounding box.north) {$\mathcal{G}_F = (\mathcal{V}, \mathcal{E}_F, \Omega, \phi_F)$};
    }]
            X_1 \arrow[rr,yshift=0.5ex,white,"R_1"]\& \& X_2 \& \& 0 \\ \\
              \& X_3 \arrow[uul,"R_6"] \arrow[loop right,"R_6"] \& \& \& 
            \end{tikzcd}
            \; \oplus \;
            \begin{tikzcd}[ampersand replacement=\&,execute at end picture={
        \draw[black, thin] 
            ([shift={(-5pt,-5pt)}]current bounding box.south west) 
            rectangle 
            ([shift={(5pt,5pt)}]current bounding box.north east);
            \node[above] at ([shift={(0,5pt)}]current bounding box.north) {$\mathcal{G}_{-V} = (\mathcal{V}, \mathcal{E}_{-V}, \Omega, \phi_{-V})$};
    }
]
            X_1 \arrow[rr,yshift=0.5ex,"R_1"] \& \& X_2 \arrow[ll,yshift=-0.5ex,"R_2"] \arrow[ddl,xshift=-0.4ex,yshift=0.4ex,"R_4"']\arrow[rr,"R_3"] \& \& 0 \\ \\
              \& X_3 \arrow[uur,xshift=0.4ex,yshift=-0.4ex,"{\{R_5,R_6\}}"']   \& \& \& 
            \end{tikzcd}
\end{equation}
This splitting is catalytic because we have a pair of autocatalytic ($R_6 \in \phi(e_{33})$) and catalytic ($R_6 \in \phi(e_{31})$) instances of the autocatalytic reaction $R_6 \in \mathcal{R}_A$.

\end{example}

\subsection{Main Result}
\label{sec:mainresult}

We now present the main result of this paper. The proof is contained in Appendix \ref{app:d}.

\begin{theorem}
\label{thm:1}
Consider an integer-valued and binary chemical reaction network $(\mathcal{S},\mathcal{R})$ satisfying \textbf{(K1)}--\textbf{(K5)} with siphon $\mathcal{X} \subseteq \mathcal{S}$ and boundary steady state $c^* = (x^*, y^*)$. Suppose that the ALT-graph $\mathcal{G} = (\mathcal{V},\mathcal{E}, \Omega, \phi)$ of the network has an autocatalytic decomposition $\mathcal{G} = \mathcal{G}_F \oplus \mathcal{G}_{-V}$ with $\mathcal{G}_F = (\mathcal{V},\mathcal{E}_F, \Omega, \phi_F)$ and $\mathcal{G}_{-V} = (\mathcal{V},\mathcal{E}_{-V}, \Omega, \phi_{-V})$ where $\mathcal{E}_F \not= \emptyset$ and $\mathcal{G}_{-V}$ does not contain any critical autocatalytic reactions or critical strongly connected components. Then the Jacobian of \eqref{eq:crs2} with respect to $x$ and evaluated at $c^*$ can be split as $J = F-V$ where $J$ is the ALT-matrix of $\mathcal{G}$, $F$ is the ALT-matrix of $\mathcal{G}_F$, and $-V$ is the ALT-matrix of $\mathcal{G}_{-V}$ (Definition \ref{def:alt-matrix}). Furthermore, $F$ is nonnegative and nonzero, and $V$ is a $Z$-matrix with nonnegative inverse.\newline \indent It follows that, if $c^*$ is locally asymptotically stable with respect to the invariant subsystem at the intersection of the boundary face containing $c^*$ and the stoichiometric compatibility class $\mathsf{C}_{c_0}$, then $c^*$ is locally asymptotically stable with respect to $\mathsf{C}_{c_0}$ if $\rho(FV^{-1}) < 1$ and $c^*$ is unstable with respect to $\mathsf{C}_{c_0}$ if $\rho(FV^{-1}) > 1$.
\end{theorem}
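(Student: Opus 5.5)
The plan has four steps: establish the splitting, check $F$, check $V$, and then apply Theorem \ref{thm:ngm2} and Theorem \ref{thm:ngm}.

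\textbf{Step 1: the splitting $J=F-V$.} By Theorem \ref{thm:jacobian}, $L(\mathcal{G})=J$. Since $\mathcal{G}=\mathcal{G}_F\oplus\mathcal{G}_{-V}$, Lemma \ref{lemma:multiset} gives
\[
J = L(\mathcal{G}_F)+L(\mathcal{G}_{-V}).
\]
Setting $F:=L(\mathcal{G}_F)$ and $V:=-L(\mathcal{G}_{-V})$ yields the splitting.

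\textbf{Step 2: $F$ is nonnegative and nonzero.} Here I use the structure of an autocatalytic decomposition. $\mathcal{G}_F$ is a multiset sum of graphs $\mathcal{G}_{F_i}$. Each $\mathcal{G}_{F_i}$ is either a single loop $\{R_k\}$ on $e_{ii}$, or a loop on $e_{ii}$ together with one edge $e_{ij}$, both labelled $R_k$.
\begin{itemize}
\item In the loop-only case, $L(\mathcal{G}_{F_i})$ has the single entry $\partial_iR_k\ge 0$ on the diagonal.
\item In the loop-plus-edge case, the diagonal entry is $\partial_iR_k-\partial_iR_k=0$ and the $(j,i)$ entry is $\partial_iR_k\ge0$.
\end{itemize}
So each summand is nonnegative, and hence so is $F$. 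For positivity, note that an autocatalytic instance of $R_k$ on $e_{ii}$ only arises (rules (a),(d),(e),(f)) when the reduced source of $R_k$ is exactly $X_i$. By (K4), with $y^*>0$, this gives $\partial_iR_k>0$. Hence every summand is nonzero, and $F\neq 0$ because $\mathcal{E}_F\ne\emptyset$.

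\textbf{Step 3: $V$ is a $Z$-matrix.} Off-diagonal entries of $L(\mathcal{G}_{-V})$ are sums of $\partial_iR_k\ge0$ over edges $e_{ij}$, so $V$ has nonpositive off-diagonal entries.

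\textbf{Step 4: $V$ has a nonnegative inverse.} I aim to verify the hypotheses of Theorem \ref{thm:ngm2} for $V$.
\begin{itemize}
\item \emph{Block form.} Order the strongly connected components of $\mathcal{G}_{-V}$ restricted to $\mathcal{X}$ topologically, so that edges only go from later to earlier components. Because $e_{ij}$ sits in column $i$, row $j$, this gives the block upper-triangular form (possibly after reversing the order). The diagonal blocks $A_h$ are irreducible $Z$-matrices, and the off-diagonal blocks are nonpositive.
\item \emph{Column sums.} For column $i$, a direct computation from Definition \ref{def:alt-matrix} gives
\[
(\mathbbm{1}^TA_h)_i=\sum_{R_k\in\phi_{-V}(e_{i0})}\partial_iR_k+\sum_{\substack{p\notin\mathcal{Z}_h\\ R_k\in\phi_{-V}(e_{ip})}}\partial_iR_k-\sum_{R_k\in\phi_{-V}(e_{ii})}\partial_iR_k.
\]
\end{itemize}

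\textbf{Step 4a: nonnegativity of the column sums.} This is the main obstacle. Each autocatalytic instance of $R_k$ remaining in $\mathcal{G}_{-V}$ on $e_{ii}$ has to be paired with a catalytic instance of $R_k$ in $\mathcal{G}_{-V}$ leaving $\mathcal{Z}_h$ from $X_i$. That is exactly what ``$\mathcal{G}_{-V}$ has no critical autocatalytic reactions'' asserts, read in $\mathcal{G}_{-V}$ with its own components. The rules guarantee that such instances come from the same vertex $X_i$, since every instance of $R_k$ has tail at its unique source. Matching multiplicities then gives nonnegativity. I would need to argue carefully about reactions having several autocatalytic instances remaining in $\mathcal{G}_{-V}$. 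Since each reaction contributes exactly one loop instance in $\mathcal{G}$, at most one loop remains, and one exiting catalytic instance suffices.

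\textbf{Step 4b: the column sums are nonzero.} Here I use non-criticality of $\mathcal{Z}_h$. Removing the paired catalytic instances does not disconnect $\mathcal{Z}_h$ from $0$. So there is a remaining exiting edge (a leak, a transition out, or a surplus catalytic instance) contributing a strictly positive $\partial_iR_k$ by (K4), with no loop cancelling it.

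\textbf{Conclusion.} With these hypotheses verified, Theorem \ref{thm:ngm2} gives $V^{-1}\ge0$. Theorem \ref{thm:ngm} then yields the stability conclusion.
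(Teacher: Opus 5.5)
Your proposal is correct and follows the paper's proof essentially step for step: Theorem \ref{thm:jacobian} with Lemma \ref{lemma:multiset} for the splitting, the sign analysis of the autocatalytic and catalytic summands of $F$, the topologically ordered block-triangular form of $V$, and verification of the column-sum hypotheses of Theorem \ref{thm:ngm2} via the two non-criticality conditions. The only difference is presentational: you write $(\mathbbm{1}^TA_h)_i$ directly as exiting weight minus loop weight and match each surviving loop with one exiting catalytic instance of the same reaction, which handles partially split reactions in $\mathcal{G}_{-V}$ (and the strict positivity via \textbf{(K4)}) more transparently than the paper's reaction-type case analysis.
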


Theorem \ref{thm:1} identifies the key process by which the ALT-graph of a chemical reaction network must be split in order to guarantee a valid splitting $J = F - V$ of the Jacobian. Specifically, we must: construct the ALT-graph; then identify the critical reactions and critical strongly connected components; then systematically remove autocatalytic and catalytic instances of autocatalytic reactions until no critical reactions or critical strongly connected components remain. The autocatalytic and catalytic instances which are removed from the graph are aggregated in $\mathcal{G}_F$ while $\mathcal{G}_{-V}$ contains all remaining instances. The matrices $F$ and $-V$ correspond to the ALT-matrices of the respective graphs. Removal of critical autocatalytic reactions and critical strongly connected components guarantees that the resulting $V$ has a nonnegative inverse by Theorem \ref{thm:ngm2}, which allows Theorem \ref{thm:ngm} to be applied.

The following result is a consequence of Theorem \ref{thm:1}. The proofs of each part are contained in Appendix \ref{app:e}.

\begin{corollary}
\label{cor:2}
Consider an integer-valued and binary chemical reaction network $(\mathcal{S},\mathcal{R})$ satisfying \textbf{(K1)}--\textbf{(K5)} with siphon $\mathcal{X} \subseteq \mathcal{S}$, boundary steady state $c^* = (x^*, y^*)$, and ALT-graph $\mathcal{G} = (\mathcal{V},\mathcal{E}, \Omega, \phi)$. Suppose that there is a directed path from every vertex $X_i \in \mathcal{X}$ to $0$. Suppose that, for any choice of kinetic rate functions satisfying \textbf{(K1)}--\textbf{(K5)}, the corresponding boundary steady state $c^*$ is locally asymptotically stable with respect to the invariant subsystem at the intersection of the boundary face containing $c^*$ and the stoichiometric compatibility class $\mathsf{C}_{c_0}$.
\begin{enumerate}
\item Suppose that $\mathcal{G}$ does not have any critical autocatalytic reactions or critical strongly connected components. Then the boundary steady state $c^*$ is locally asymptotically stable with respect to its stoichiometric compatibility class $\mathsf{C}_{c^*}$ for any choice of kinetic rate functions satisfying \textbf{(K1)}--\textbf{(K5)}. In particular, for mass-action systems, stability holds for all choices of rate constants $\kappa_k > 0$, $k= 1, \ldots, r$.
\item Suppose that $\mathcal{G}$ has at least one critical autocatalytic reaction or critical strongly connected component (i.e., $\mathcal{R}_C \not= \emptyset$ or $\mathcal{Z}_C \not= \emptyset$). Then there is an autocatalytic decomposition $\mathcal{G} = \mathcal{G}_F \oplus \mathcal{G}_{-V}$ with corresponding matrices $F = L(\mathcal{G}_{F})$ and $-V = L(\mathcal{G}_{-V})$ where $F$ is a nonnegative nonzero matrix, $V$ is a $Z$-matrix with nonnegative inverse, and $\mbox{rank}(FV^{-1}) \leq |\mathcal{R}_C| + | \mathcal{Z}_C |$.
\end{enumerate}
\end{corollary}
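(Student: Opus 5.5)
Part 1 is the trivial autocatalytic decomposition, and part 2 is built by an explicit greedy construction whose output is checked against Theorem \ref{thm:1} and then bounded in rank.

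\emph{Part 1.} Take $\mathcal{G}_F$ empty and $\mathcal{G}_{-V}=\mathcal{G}$, so $F=0$ and $-V=J$. Theorem \ref{thm:1} requires $\mathcal{E}_F\neq\emptyset$, so rather than invoking it I would reprove its matrix content directly. By Theorem \ref{thm:2}, $V=-J$ is a $Z$-matrix. Order the strongly connected components of $\mathcal{X}$ topologically, which puts $V$ in the block-triangular form of Theorem \ref{thm:ngm2}.
\begin{itemize}
\item Absence of critical autocatalytic reactions means every reaction with an autocatalytic instance inside $\mathcal{Z}_h$ also has a catalytic instance leaving $\mathcal{Z}_h$.
\item By Definition \ref{def:alt-matrix}, the column sums of the diagonal block $A_h$ then equal the total weight of edges leaving $\mathcal{Z}_h$ (leaks and transitions) minus the autocatalytic loop weights.
\item Each loop weight $\partial_iR_k$ is cancelled by one leaving instance of the same $R_k$, so every column sum of $A_h$ is nonnegative.
\item The directed path to $0$ gives at least one edge leaving $\mathcal{Z}_h$ that is not used for such a cancellation. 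Non-criticality of the components is exactly what guarantees this.
\end{itemize}
Hence $\mathbbm{1}^TA_h$ is nonnegative and nonzero. Theorem \ref{thm:ngm2} then gives $V^{-1}\ge 0$, and $V$ is a nonsingular $M$-matrix. Its eigenvalues lie in the open right half-plane, so $J=-V$ is Hurwitz for all kinetics. Stability relative to $\mathsf{C}_{c^*}$ then follows from the standard block-triangular argument used in proving Theorem \ref{thm:ngm}, combined with the assumed stability of the invariant boundary subsystem.

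\emph{Part 2.} I would construct the decomposition greedily.
\begin{itemize}
\item For each critical autocatalytic reaction $R_k\in\mathcal{R}_C$, perform an autocatalytic splitting that moves its loop instance into $\mathcal{G}_F$. Since $R_k$ has no leaving catalytic instance, nothing else needs to move.
\item For each critical component $\mathcal{Z}_h\in\mathcal{Z}_C$, choose one autocatalytic reaction whose leaving catalytic instance, if retained, reconnects $\mathcal{Z}_h$ to $0$. Perform a catalytic splitting moving the loop together with an \emph{internal} catalytic instance, or an autocatalytic splitting, so that the leaving instance stays in $\mathcal{G}_{-V}$. This repairs the column-sum deficit.
\end{itemize}

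Next I would verify that the resulting $\mathcal{G}_{-V}$ has no critical reactions or components, recomputing components after each step, and apply Theorem \ref{thm:1}. The main obstacle is this step. The splittings change the strongly connected components and may create new criticalities, since removing edges can split components. I would argue monotonicity: removing only loops and internal edges never removes edges leaving a component, so paths to $0$ are preserved.

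For the rank bound, each elementary splitting contributes a matrix $L(\mathcal{G}_{F_i})$ supported in a single column $i$, namely the loop plus at most one catalytic entry. Splittings sharing a column add within that column, so $\operatorname{rank}F\le$ the number of splittings, which is at most $|\mathcal{R}_C|+|\mathcal{Z}_C|$. Finally, $\operatorname{rank}(FV^{-1})\le\operatorname{rank}F$.
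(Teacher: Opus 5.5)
Your Part 1 is the paper's argument ($F=0$, $V=-J$, with the hypotheses of Theorem \ref{thm:ngm2} checked componentwise). Your remark that Theorem \ref{thm:1} does not literally apply when $\mathcal{E}_F=\emptyset$ is a point the paper glosses over. Part 2 follows the paper's overall structure, but the step you call the main obstacle is not actually closed, and the rank bound depends on it.

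The trouble is your option of a catalytic splitting that moves an \emph{internal} instance $R_k\in\phi(e_{ij})$ with $X_j\in\mathcal{Z}_h$. Deleting an internal edge can split $\mathcal{Z}_h$ into several new components, and your monotonicity claim says nothing about them. Preserving paths to $0$ does not decide whether a new sub-component is critical. That depends on whether its exits, which now include formerly internal edges, are removable catalytic instances.

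The claim is also not the right reason for reachability. In general, deleting an internal edge can cut vertices off from a component's only exit. Here it is harmless only because $X_i$ keeps the now non-catalytic instance $R_k\in\phi(e_{i\ell})$, and every vertex of $\mathcal{Z}_h$ still reaches $X_i$.

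If a new critical sub-component could arise, further splittings would be needed, and your count $|\mathcal{R}_C|+|\mathcal{Z}_C|$, hence the rank bound, would fail. The catalytic option can in fact be rescued, but you have not given the argument. Any exit of a new sub-component $\mathcal{Z}'$ into the rest of $\mathcal{Z}_h$ is one of two kinds:
\begin{itemize}
\item It is not a catalytic instance in $\mathcal{G}_{-V}$, because its reaction is not autocatalytic or lay in $\mathcal{R}_C$ and has lost its loop.
\item It belongs to a non-critical reaction that also exits $\mathcal{Z}_h$. That reaction then has two exits from $\mathcal{Z}'$, only one of which is removable.
\end{itemize}

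The paper avoids all of this by using only autocatalytic splittings, for the critical components as well as for $\mathcal{R}_C$.
\begin{itemize}
\item Moving a loop changes no edge between distinct vertices, so the strongly connected components and all paths to $0$ are unchanged.
\item For $\mathcal{Z}_h\in\mathcal{Z}_C$, once the loop of the chosen $R_k$ is moved, $R_k$ is no longer autocatalytic in $\mathcal{G}_{-V}$. Its instance leaving $\mathcal{Z}_h$ is therefore no longer removable, and its target reaches $0$ without re-entering $\mathcal{Z}_h$. So $\mathcal{Z}_h$ is no longer critical.
\item The components are fixed while the set of autocatalytic reactions only shrinks, so nothing that was non-critical becomes critical.
\item The chosen reactions are distinct: a reaction in $\mathcal{R}_C$ has no leaving instance, and each autocatalytic reaction has a single source vertex. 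Hence exactly $|\mathcal{R}_C|+|\mathcal{Z}_C|$ splittings occur, and $F$ is diagonal with at most that many nonzero entries.
\end{itemize}
Committing to the autocatalytic option you already list completes your proof.
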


\noindent Corollary \ref{cor:2} part 2. gives upper bounds on the rank of the next-generation matrix $FV^{-1}$, which simplifies computation of the reproduction number $\rho(FV^{-1})$.

\begin{example}
\label{ex:1006}
Reconsider the chemical reaction network \eqref{eq:bifunctional} in Examples \ref{ex:1000}, \ref{ex:1001}, \ref{ex:1002}, \ref{ex:1003}, \ref{ex:1004}, and \ref{ex:1005}. In Example \ref{ex:1005}, we presented three autocatalytic decompositions (\eqref{eq:1006-1}, \eqref{eq:1006-2}, and \eqref{eq:1006-3}) where the resulting $\mathcal{G}_{-V}$ does not contain any autocatalytic reactions. Consequently, there were no critical autocatalytic reactions or critical strongly connected components and therefore Theorem \ref{thm:1} may be applied. It follows that $F$ is a nonnegative nonzero matrix, $V$ is a $Z$-matrix with nonnegative inverse, the next-generation matrix $FV^{-1}$ is well-defined and nonnegative, and $\rho(FV^{-1})$ can be computed.

For the autocatalytic splitting \eqref{eq:1006-1}, we have the following reproduction number:
\begin{equation}
\label{brn1}
\rho(FV^{-1}) = \displaystyle\frac{\partial_3 R_6 \partial_2 (R_3 + R_4)}{\partial_2 R_3\partial_3 (R_5 + 2R_6)}.
\end{equation}
We interpret \eqref{brn1} in the following way. For \eqref{eq:1006-1} we have split the $R_6$ into three components: the autocatalytic instance $R_6 \in \phi_F(e_{33})$ in $\mathcal{G}_F$ and the catalytic instances $R_6 \in \phi_{-V}(e_{31})$ and $R_6 \in \phi_{-V}(e_{32})$ in $\mathcal{G}_{-V}$. The autocatalytic instance $R_6 \in \phi_F(e_{33})$ produces $X_3$ at the rate given by the $(3,3)$-entry of $F$ while the $(3,3)$-entry of $V^{-1}$ corresponds to the expected cumulative time spent in state $X_3$ before transitioning to $0$. Consequently, the $(3,3)$-entry of $FV^{-1}$---which corresponds to $\rho(FV^{-1})$ \eqref{brn1} since $FV^{-1}$ has a single nonzero row---gives the expected number of $X_3$ produced per $X_3$ before exiting the system. If this number is greater than one (i.e., $\rho(FV^{-1}) > 1$), then more $X_3$ are produced than depleted for each new $X_3$ and hence the steady state with $x_3 = 0$ is unstable. Consequently, in this splitting we may interpret $F$ as the ``force of autocatalysis'' for $X_3$ and $V^{-1}$ as the ``dwell time'' for autocatalytically-created $X_3$.

In contrast, for the catalytic splittings \eqref{eq:1006-2} and \eqref{eq:1006-3}, we have the following reproduction number:
\begin{equation}
\label{brn2}
\rho(FV^{-1}) = \displaystyle\frac{\partial_2 R_4 \partial_3 R_6}{\partial_2 R_3 \partial_3 (R_5 + R_6)}.
\end{equation}
For \eqref{eq:1006-2}, we interpret \eqref{brn2} in the following way. We have split the $R_6$ into three components: the autocatalytic instance $R_6 \in \phi_F(e_{33})$ and catalytic instance $R_6 \in \phi_{F}(e_{32})$ in $\mathcal{G}_F$ and the catalytic instance $R_6 \in \phi_{-V}(e_{31})$ in $\mathcal{G}_{-V}$. The combined effect of the autocatalytic instance $R_6 \in \phi_F(e_{33})$ and catalytic instance $R_6 \in \phi_{F}(e_{32})$ is to produce $X_2$ from $X_3$ at the rate given by the $(2,3)$-entry of $F$. Note that the $(3,3)$-entry of $F$ is zero. In order to produce more $X_3$, we must transition back to $X_3$ which is directly possible through the instance $R_4 \in \phi_{-V}(e_{23})$. The $(3,2)$-entry of $V^{-1}$ corresponds to the expected cumulative time spent in state $X_3$ when starting in state $X_2$ before transitioning to $0$. The $(2,2)$-entry of $FV^{-1}$---which corresponds to $\rho(FV^{-1})$ \eqref{brn2} since $FV^{-1}$ has a single nonzero row---gives the expected number of $X_2$ produced per $X_2$ before it exits the system. If this number is greater than one (i.e., $\rho(FV^{-1}) > 1$), then more $X_2$ are produced than depleted for each new $X_2$ and hence the steady state with $x_2 = 0$ is unstable. Consequently, in this splitting we may interpret $F$ as the ``force of catalysis'' for $X_2$ from $X_3$ and $V^{-1}$ as the ``dwell time'' for catalytically-created $X_2$. A similar interpretation follows for \eqref{eq:1006-3} but with $X_1$ replacing $X_2$ throughout.

We note that different splittings may produce different reproduction numbers; however, $\rho(FV^{-1})=1$ corresponds to the same bifurcation set in each case. Specifically, \eqref{brn1} and \eqref{brn2} both yield the bifurcation threshold
\[\partial_2 R_4 \partial_3 R_6 = \partial_2 R_3 \partial_3 (R_5 + R_6).\]
The difference in the value of the reproduction numbers \eqref{brn1} and \eqref{brn2} results from whether we interpret the catalytic reaction $X_3 \to X_1 + X_2$ as creating an additional $X_3$, $X_2$, or $X_1$ through the three possible splittings \eqref{eq:1006-1}, \eqref{eq:1006-2}, or \eqref{eq:1006-3}, respectively.

\end{example}

\subsection{Algorithm}
\label{sec:algorithm}

Theorem \ref{thm:1} and Corollary \ref{cor:2} suggest a path for constructing valid and rank-reducing next-generation splittings $J = F - V$. After constructing the ALT-graph $\mathcal{G}$ of the chemical reaction network $(\mathcal{S},\mathcal{R})$, we need to iteratively perform autocatalytic and catalytic splittings $\mathcal{G}$ until we have an autocatalytic decomposition $\mathcal{G} = \mathcal{G}_{F} \oplus \mathcal{G}_{-V}$ \eqref{eq:decomposition} where $\mathcal{G}_{-V}$ has no critical autocatalytic reactions or critical strongly connected components. If successful, we have that $L(\mathcal{G}_F) = F$ is nonnegative and nonzero while $V = - L(\mathcal{G}_{-V})$ satisfies the conditions of Theorem \ref{thm:ngm2}.

Corollary \ref{cor:2} gives an upper bound on the rank of the next-generation matrix $FV^{-1}$. In practice, however, these bounds are not necessarily achieved for a careful selection of autocatalytic and catalytic instances for removal. We present the following heuristics---which we apply in the following order---for selecting instances to remove from $\mathcal{G}_{-V}$ and add to $\mathcal{G}_F$.
\begin{enumerate}
\item \textbf{Common target vertex preference:}
\begin{enumerate}
    \item Prioritize autocatalytic and catalytic splittings which remove instances from $\mathcal{G}_{-V}$ whose target vertex already corresponds to the target vertex of an edge in $\mathcal{G}_F$, unless doing so would disconnect the source vertex from $0$.
    \item If multiple autocatalytic reactions each have an instance leading to the same target vertex in $\mathcal{G}_{-V}$, remove one of these instances, prioritizing catalytic splittings first, unless doing so would disconnect a source vertex from $0$.
\end{enumerate}
\item \textbf{Catalytic preference:} When resolving a critical autocatalytic reaction from $\mathcal{G}_{-V}$ that has both an autocatalytic splitting and catalytic splitting available, perform the catalytic splitting unless doing so would cut off the source vertex from $0$.
\end{enumerate}

\noindent Intuitively, selecting instances for removal by \emph{common target vertex preference} keeps the rank of $F$ low by selecting terms which contribute to a common row of $F$ (i.e., the row $j$ corresponding to the common target vertex $X_j$). Selecting instances for removal by \emph{catalytic preference} uses the observation that catalytic splittings may change the connectivity of the graph while autocatalytic splittings leave connectivity properties unchanged. Catalytic splittings consequently help keep the rank of $F$ low by increasing the potential to disconnect the strongly connected components, which makes it less likely to have critical strongly connected components (examples are contained in Appendix \ref{app:g}). After applying heuristics 1. and 2., if there are remaining autocatalytic reactions associated with $\mathcal{R}_C$ and $\mathcal{Z}_C$, there is no preference in which are chosen for the next splitting. The process is outlined in Algorithm \ref{alg:1}.

\begin{algorithm} \small
\begin{algorithmic}[1] 
\Require Chemical reaction network $(\mathcal{S}, \mathcal{R})$ with boundary steady state $c^*$
\State Identify a siphon $\mathcal{X} \subseteq \mathcal{S}$ corresponding to $c^*$ (Definition \ref{def:siphon})
\State Construct ALT-graph $\mathcal{G} = (\mathcal{V}, \mathcal{E}, \Omega, \phi)$ of $(\mathcal{S},\mathcal{R})$ on siphon $\mathcal{X}$ (Definition \ref{def:alt-graph})
\If{there is a vertex with no path in $\mathcal{G}$ to $0$}
    \State Stability of $c^*$ is inconclusive
\Else{}
    \State Initialize $\mathcal{G}_{-V} = (\mathcal{V}, \mathcal{E}_{-V}, \Omega, \phi_{-V})$ with $\mathcal{E}_{-V} = \mathcal{E}$, $\phi_{-V}=\phi$ and $\mathcal{G}_F = (\mathcal{V}, \mathcal{E}_F, \Omega, \phi_F)$ with $\mathcal{E}_F = \emptyset$, $\phi_F=\emptyset$
    \State Identify autocatalytic and critical sets $\mathcal{R}_A$, $\mathcal{R}_C$, and $\mathcal{Z}_C$ of $\mathcal{G}_{-V}$ (Definition \ref{def:autocatalytic})
    \If{$\mathcal{R}_C = \emptyset$ and $\mathcal{Z}_C = \emptyset$}
        \State $c^*$ is universally locally asymptotically stable if boundary subsystem is stable (Corollary \ref{cor:2}, part 1.)
    \Else
        \While{$\mathcal{R}_C \not= \emptyset$ \textbf{or} $\mathcal{Z}_C \not= \emptyset$}
        \renewcommand{\algorithmicthen}{}
\If{\parbox[t]{\dimexpr\linewidth-5em}{%
    there are instances $R_k \in \phi_{-V}(e_{i\ell}) \cap \mathcal{R}_C$, $i \neq \ell$, and $R_{\tilde{k}} \in \phi_F(e_{j\ell})$, $R_{\tilde k} \not= R_k$, \textbf{and} \\
    \hspace*{-1.0em}removal of $R_k \in \phi_{-V}(e_{i\ell})$ from $\mathcal{G}_{-V}$ does not disconnect $X_i$ from $0$ \textbf{then}}}
    \State Remove instances $R_k \in \phi_{-V}(e_{ii})$ and $R_k\in\phi_{-V}(e_{i\ell})$ from $\mathcal{G}_{-V}$, add to $\mathcal{G}_F$
            \ElsIf{there are instances $R_k \in \phi_{-V}(e_{ii}) \cap \mathcal{R}_C$ and $R_{\tilde{k}} \in \phi_F(e_{ji})$, $R_{\tilde k} \not= R_k$, \textbf{then}}
                \State Remove instance $R_k \in \phi_{-V}(e_{ii})$ from $\mathcal{G}_{-V}$, add to $\mathcal{G}_F$
            \ElsIf{\parbox[t]{\dimexpr\linewidth-5em}{there are instances $R_k \in \phi_{-V}(e_{i\ell}) \cap \mathcal{R}_C$, $i \not= \ell,$ and $R_{\tilde{k}} \in \phi_{-V}(e_{j\ell}) \cap \mathcal{R}_C$, $R_{\tilde k} \not= R_k$, \textbf{and} \\
            \hspace*{-3.0em}removal of $R_k \in \phi_{-V}(e_{i\ell})$ from $\mathcal{G}_{-V}$ does not disconnect $X_i$ from $0$ \textbf{then}}}
                    \State Remove instances $R_k \in \phi_{-V}(e_{ii})$ and $R_k \in \phi_{-V}(e_{i\ell})$ from $\mathcal{G}_{-V}$, add to $\mathcal{G}_F$
            \ElsIf{there are instances $R_k \in \phi_{-V}(e_{ii}) \cap \mathcal{R}_C$ and $R_{\tilde{k}} \in \phi_{-V}(e_{ii}) \cap \mathcal{R}_C$, $R_{\tilde k} \not= R_k$,}
                \State Remove instance $R_k \in \phi_{-V}(e_{ii})$ from $\mathcal{G}_{-V}$, add to $\mathcal{G}_F$

            \ElsIf{\parbox[t]{\dimexpr\linewidth-5em}{$\mathcal{R}_C \not= \emptyset$}

                \If{\parbox[t]{\dimexpr\linewidth-5em}{there is instance $R_k\in\phi_{-V}(e_{i\ell})\cap\mathcal{R}_C$, $i\neq\ell$ \textbf{and} \\
            \hspace*{-1.0em}removal of $R_k\in\phi_{-V}(e_{i\ell})$ from $\mathcal{G}_{-V}$ does not disconnect $X_i$ from $0$ \textbf{then}}}
                    \State Remove instances $R_k \in \phi_{-V}(e_{ii})$ and $R_k \in \phi_{-V}(e_{i\ell})$ from $\mathcal{G}_{-V}$, add to $\mathcal{G}_F$}
                \Else{}
                \State Select any $R_k\in\mathcal{R}_C$ and remove instance $R_k \in \phi_{-V}(e_{ii})\cap\mathcal{R}_C$ from $\mathcal{G}_{-V}$, add to $\mathcal{G}_F$
                \EndIf
            \ElsIf{$\mathcal{Z}_C \not= \emptyset$}
                \State Select $\mathcal{Z}_h \in \mathcal{Z}_C$ and $R_k \in \mathcal{R}_A$ with $R_k \in\phi_{-V}(e_{ii})\cap \phi_{-V}(e_{ij})$, $X_i \in \mathcal{Z}_h$, $X_j \not\in \mathcal{Z}_h$
                \State Remove instance $R_k \in \phi_{-V}(e_{ii})$ from $\mathcal{G}_{-V}$, add to $\mathcal{G}_F$
            \EndIf
            \State Recompute $\mathcal{R}_A$, $\mathcal{R}_C$ and $\mathcal{Z}_C$ for $\mathcal{G}_{-V}$
        \EndWhile
        \State Compute ALT-matrices $F = L(\mathcal{G}_F)$ and $-V = L(\mathcal{G}_{-V})$
        \State Compute next-generation matrix $FV^{-1}$ and reproduction number $\rho(FV^{-1})$
        \State Establish thresholds of stability of $c^*$ according to Theorem \ref{thm:ngm}
    \EndIf
\EndIf
\end{algorithmic}
\caption{Procedure for assessing the stability of a boundary steady state of an integer-valued and binary chemical reaction network satisfying \textbf{(K1)}---\textbf{(K5)} via siphon-based ALT-graph decomposition. }
\label{alg:1}
\end{algorithm}

\section{Examples}
\label{sec:examples}

In this section, we present four examples of the ALT-graph approach to constructing the next-generation matrix $FV^{-1}$ drawn from the mathematical biochemistry and epidemiology literature.

\subsection{Biochemical Examples}
\label{sec:biochemexamples}

In this section, we present two further examples of biochemical models for which the ALT-graph method introduced in Section \ref{sec:mainresult} succeeds in efficiently generating a valid Jacobian splitting $J = F - V$.

\begin{example}
\label{ex:envzompr}
Consider the following EnvZ-OmpR osmoregulation mechanism, which was introduced in \cite{shinar2010structural} and has been studied in the context of boundary stability via the next-generation matrix method in the papers \cite{avram2024advancing,johnston2026boundary}:
\begin{equation}
\label{envzompr}
\left\{ \; \; 
      \begin{tikzcd}
X_d \arrow[r,yshift=+0.5ex,"R_1"] & X \arrow[l,yshift=-0.5ex,"R_2"] \arrow[r,yshift=+0.5ex,"R_3"]& X_t \arrow[l,yshift=-0.5ex,"R_4"] \arrow[r,"R_5"]& X_p\\[-0.1in]
X_p + Y \arrow[r,yshift=+0.5ex,"R_6"] & X_pY \arrow[l,yshift=-0.5ex,"R_7"] \arrow[r,yshift=+0.5ex,"R_8"]& X + Y_p & \\[-0.1in]
X_d + Y_p \arrow[r,yshift=+0.5ex,"R_9"] & X_dY_p \arrow[l,yshift=-0.5ex,"R_{10}"] \arrow[r,yshift=+0.5ex,"R_{11}"]& X_d + Y &  \\[-0.1in]
X_t + Y_p \arrow[r,yshift=+0.5ex,"R_{12}"] & X_tY_p \arrow[l,yshift=-0.5ex,"R_{13}"] \arrow[r,yshift=+0.5ex,"R_{14}"]& X_t + Y &
      \end{tikzcd} \right.
\end{equation}
The network \eqref{envzompr} has the siphon $\mathcal{X} = \{ X_d, X, X_t, Y, X_pY, X_dY_p, X_tY_p \}$. The ALT-graph corresponding to $\mathcal{X}$ is given in Figure \ref{fig:ALT-envzompr}, and the assignment of the individual transitions is contained in Table \ref{tab:jacobian2} in Appendix \ref{app:h}. The autocatalytic reaction set is $\mathcal{R}_A = \{ R_{11}, R_{14}\}$ and the strongly connected components are given by $\mathcal{Z}_1 = \{ X_d, X, X_t, Y, X_pY, X_dY_p, X_tY_p \}$ and $\mathcal{Z}_2 = \{ 0 \}$. Since all catalytic instances of $R_{11}$ and $R_{14}$ lie within $\mathcal{Z}_1$, we have that all autocatalytic reactions are critical so that $\mathcal{R}_C = \{ R_{11}, R_{14} \}$. Since no instances of the autocatalytic reactions $\mathcal{R}_A = \{ R_{11}, R_{14} \}$ leave $\mathcal{Z}_1$, we have no critical strongly connected components so that $\mathcal{Z}_C = \emptyset$. Consequently, we need to perform autocatalytic/catalytic splittings according to Algorithm \ref{alg:1} until $\mathcal{R}_C = \emptyset$.

Applying Algorithm \ref{alg:1}, we initialize $\mathcal{G}_F$ with $\mathcal{E}_F = \emptyset$ and $\mathcal{G}_{-V} = \mathcal{G}$ with $\mathcal{E}_{-V} = \mathcal{E}$. We notice that the vertex $Y$ is the target of catalytic instances of $R_{11}$ and $R_{14}$. Applying lines 16--17 and then 12--13 (\emph{common target vertex} preference) of Algorithm \ref{alg:1}, we iterative remove these instances from $\mathcal{G}_{-V}$ and place them in $\mathcal{G}_F$. This yields the following autocatalytic decomposition $\mathcal{G} = \mathcal{G}_F \oplus \mathcal{G}_{-V}$ given in Figure \ref{fig:ALT-envzompr}.

Note that $\mathcal{G}_{-V}$ does not have any critical autocatalytic reactions or critical strongly connected components so that, by Theorem \ref{thm:1}, $F$ is nonnegative and nonzero, $V$ is a $Z$-matrix with a nonnegative inverse, the next-generation matrix $FV^{-1}$ is valid, and $\rho(FV^{-1})$ can be computed. We note that the same splitting $J=F-V$ was ultimately utilized in \cite{johnston2026boundary} but that the matrices were determined by trial and error rather than an algorithmic approach. Further details on the splitting procedure and computation of the reproduction number are contained in Appendix \ref{app:h}.

\begin{figure}[t]
\centering
\begin{minipage}{\textwidth}
\centering
{\tiny
\begin{tikzcd}[ampersand replacement=\&,execute at end picture={
        \draw[black, thin] 
            ([shift={(-5pt,-5pt)}]current bounding box.south west) 
            rectangle 
            ([shift={(5pt,5pt)}]current bounding box.north east);                  
            \node[above] at ([shift={(0,5pt)}]current bounding box.north) 
            {$\mathcal{G}$};
    }
]
X_d \arrow[rr,yshift=0.5ex,"R_1"] \arrow[dd,xshift=-0.5ex,"R_9"'] 
\& \& 
X \arrow[ll,yshift=-0.5ex,"R_2"] \arrow[rr,yshift=0.5ex,"R_3"] 
\& \& 
X_t \arrow[ll,yshift=-0.5ex,"R_4"] \arrow[rr,"R_5"] 
\arrow[dd,xshift=-0.5ex,"R_{12}"'] 
\& \& 0 \\
\\
X_dY_p \arrow[loop left,"R_{11}"] 
\arrow[uu,xshift=0.5ex,"{\{R_{10},R_{11}\}}"'] 
\arrow[ddrr,bend right=15,"R_{11}"'] 
\& \& 
X_pY \arrow[uu,"R_8"'] \arrow[dd,xshift=-0.5ex,"R_7"'] 
\& \& 
X_tY_p \arrow[loop right,"R_{14}"] 
\arrow[uu,xshift=0.5ex,"{\{R_{13},R_{14}\}}"'] 
\arrow[ddll,bend left=15,"R_{14}"] 
\& \& \\
\\
\& \& Y \arrow[uu,xshift=0.5ex,"R_{6}"'] \& \& \& \&
\end{tikzcd} 
\hspace{1em}
=
\hspace{1em}
}
\[
{\tiny
\begin{tikzcd}[ampersand replacement=\&,execute at end picture={
        \draw[black, thin] 
            ([shift={(-5pt,-5pt)}]current bounding box.south west) 
            rectangle 
            ([shift={(5pt,5pt)}]current bounding box.north east);                  
            \node[above] at ([shift={(0,5pt)}]current bounding box.north) 
            {$\mathcal{G}_F$};
    }
]
X_d \arrow[rr,yshift=0.5ex,white,"R_1"] \arrow[dd,white,xshift=-0.5ex,"R_9"'] 
\& \& 
X \arrow[ll,yshift=-0.5ex,white,"R_2"] \arrow[rr,yshift=0.5ex,white,"R_3"] 
\& \& 
X_t \arrow[ll,yshift=-0.5ex,white,"R_4"] \arrow[rr,white,"R_5"] 
\arrow[dd,xshift=-0.5ex,white,"R_{12}"'] 
\& \& 0 \\
\\
X_dY_p \arrow[loop left,"R_{11}"] 
\arrow[uu,xshift=0.5ex,white,"{\{R_{10},R_{11}\}}"'] 
\arrow[ddrr,bend right=15,"R_{11}"'] 
\& \& 
X_pY \arrow[uu,white,"R_8"'] \arrow[dd,xshift=-0.5ex,white,"R_7"'] 
\& \& 
X_tY_p \arrow[loop right,"R_{14}"] 
\arrow[uu,xshift=0.5ex,white,"{\{R_{13},R_{14}\}}"'] 
\arrow[ddll,bend left=15,"R_{14}"] 
\& \& \\
\\
\& \& Y \arrow[uu,xshift=0.5ex,white,"R_{6}"'] \& \& \& \&
\end{tikzcd}
}
\hspace{1em}
\oplus
\hspace{1em}
{\tiny
\begin{tikzcd}[ampersand replacement=\&,execute at end picture={
        \draw[black, thin] 
            ([shift={(-5pt,-5pt)}]current bounding box.south west) 
            rectangle 
            ([shift={(5pt,5pt)}]current bounding box.north east);                  
            \node[above] at ([shift={(0,5pt)}]current bounding box.north) 
            {$\mathcal{G}_{-V}$};
    }
]
X_d \arrow[rr,yshift=0.5ex,"R_1"] \arrow[dd,xshift=-0.5ex,"R_9"'] 
\& \& 
X \arrow[ll,yshift=-0.5ex,"R_2"] \arrow[rr,yshift=0.5ex,"R_3"] 
\& \& 
X_t \arrow[ll,yshift=-0.5ex,"R_4"] \arrow[rr,"R_5"] 
\arrow[dd,xshift=-0.5ex,"R_{12}"'] 
\& \& 0 \\
\\
X_dY_p \arrow[loop left,white,"R_{11}"] 
\arrow[uu,xshift=0.5ex,"{\{R_{10},R_{11}\}}"'] 
\arrow[ddrr,bend right=15,white,"R_{11}"'] 
\& \& 
X_pY \arrow[uu,"R_8"'] \arrow[dd,xshift=-0.5ex,"R_7"'] 
\& \& 
X_tY_p \arrow[loop right,white,"R_{14}"] 
\arrow[uu,xshift=0.5ex,"{\{R_{13},R_{14}\}}"'] 
\arrow[ddll,bend left=15,white,"R_{14}"] 
\& \& \\
\\
\& \& Y \arrow[uu,xshift=0.5ex,"R_{6}"'] \& \& \& \&
\end{tikzcd}
}
\]
\end{minipage}
\caption{The ALT-graph $\mathcal{G}$ of the EnvZ-OmpR network \eqref{envzompr} and an associated autocatalytic decomposition $\mathcal{G}_F\oplus\mathcal{G}_{-V}$.  $\mathcal{G}_F$ contains two instances each of the autocatalytic reactions $R_{11}$ and $R_{14}$ and their associated transitions, while $\mathcal{G}_{-V}$ contains the remaining transitions.}
\label{fig:ALT-envzompr}
\end{figure}

\end{example}

\begin{example}
\label{ex:twosite}
Consider the two-site bifunctional enzyme mechanism
\begin{equation}
\label{eq:twosite2}
\begin{tikzcd}[ampersand replacement=\&]
            S_0 + E \arrow[r,yshift=0.5ex,"R_1"] \& C_1 \arrow[l,yshift=-0.5ex,"R_2"] \arrow[r,"R_3"] \& S_1 + E \&  S_1 + E \arrow[r,yshift=0.5ex,"R_4"] \& C_2 \arrow[l,yshift=-0.5ex,"R_5"] \arrow[r,"R_6"] \& S_2 + E \\
            S_2 + \mathcal{E}_1 \arrow[r,yshift=0.5ex,"R_7"] \& C_3 \arrow[l,yshift=-0.5ex,"R_8"] \arrow[r,"R_9"] \& S_1 + \mathcal{E}_1 \& S_1 + \mathcal{E}_2 \arrow[r,yshift=0.5ex,"R_{10}"] \& C_4 \arrow[l,yshift=-0.5ex,"R_{11}"] \arrow[r,"R_{12}"] \& S_0 + \mathcal{E}_2
            \end{tikzcd}
\end{equation}
where $\mathcal{E}_1, \mathcal{E}_2 \in \{ C_1, C_2 \}$. The intermediates from the forward chain act as the enzymes for the backward chain. The mechanism was previously studied with a focus on the capacity for absolute concentration robustness in \cite{joshi2024bifunctional,joshi2026bifunctional} and with a focus on boundary steady state stability and multistationarity in \cite{joshi2026bistability}.

In each of the four cases ($(\mathcal{E}_1, \mathcal{E}_2) = (C_1, C_1)$, $(\mathcal{E}_1, \mathcal{E}_2) = (C_1, C_2)$, $(\mathcal{E}_1, \mathcal{E}_2) = (C_2, C_1)$, $(\mathcal{E}_1, \mathcal{E}_2) = (C_2, C_2)$), the network \eqref{eq:twosite2} has the siphon $\mathcal{X} = \{S_0, S_1, C_1, C_2, C_3, C_4\}$. The ALT-graphs corresponding to $\mathcal{X}$ are given in Figure \ref{fig:alt-four-environments} and the mapping of the individual transitions is contained in Table \ref{tab:jacobian3} in Appendix \ref{app:h}. For all four mechanisms, we have that the autocatalytic reaction set is $\mathcal{R}_A = \{ R_9, R_{11}, R_{12}\}$. It remains to determine the critical autocatalytic reaction set $\mathcal{R}_C$ and the critical strongly connected component set $\mathcal{Z}_C$. We note that the strongly connected components of the ALT-graph vary by mechanism, with $(\mathcal{E}_1, \mathcal{E}_2) = (C_1, C_1)$ and $(\mathcal{E}_1, \mathcal{E}_2) = (C_1, C_2)$ admitting the strongly connected components $\mathcal{Z}_1 = \{ S_0, C_1, C_3\}$,  $\mathcal{Z}_2 = \{ S_1, C_2\}$, $\mathcal{Z}_3 = \{ C_4 \}$, and $\mathcal{Z}_4 = \{ 0 \}$, while the mechanisms with $(\mathcal{E}_1, \mathcal{E}_2) = (C_2, C_1)$ and $(\mathcal{E}_1, \mathcal{E}_2) = (C_2, C_2)$ admit the strongly connected components $\mathcal{Z}_1 = \{ S_0, C_1\}$,  $\mathcal{Z}_2 = \{ S_1, C_2, C_3\}$, $\mathcal{Z}_3 = \{ C_4 \}$, and $\mathcal{Z}_4 = \{ 0 \}$. We have the following analysis.

\begin{figure}[t]
\centering
{\tiny
\begin{tikzcd}[ampersand replacement=\&,execute at end picture={
        \draw[black, thin] 
            ([shift={(-5pt,-5pt)}]current bounding box.south west) 
            rectangle 
            ([shift={(5pt,5pt)}]current bounding box.north east);                  
            \node[above] at ([shift={(0,5pt)}]current bounding box.north) 
            {$(\mathcal{E}_1, \mathcal{E}_2) = (C_1, C_1)$};
    }
]
\& \& \& \& 
C_4 \arrow[loop above, "{\{R_{11},R_{12}\}}"] 
\arrow[lldd,"{\{R_{11},R_{12}\}}"'] 
\arrow[lllldd,bend right=30,"R_{12}"'] 
\arrow[dd,"R_{11}"] 
\& \& \& \& \\ \\
S_0 \arrow[rr,yshift=0.5ex,"R_1"] 
\& \& 
C_1 \arrow[ll,yshift=-0.5ex,"R_2"] 
\arrow[rr,"R_3"] 
\arrow[rrdd,xshift=0.3ex,yshift=0.3ex,"R_7"] 
\& \& 
S_1 \arrow[rr,yshift=0.5ex,"R_4"] 
\& \& 
C_2 \arrow[ll,yshift=-0.5ex,"R_5"] 
\arrow[rr,"R_6"] 
\& \& 0 \\ \\
\& \& \& \& 
C_3 \arrow[loop below,"R_9"] 
\arrow[lluu,xshift=-0.3ex,yshift=-0.3ex,"{\{R_8,R_9\}}"] 
\arrow[uu,"R_9"'] 
\& \& \& \&
\end{tikzcd}
}
\hspace{1em}
{\tiny
\begin{tikzcd}[ampersand replacement=\&,execute at end picture={
        \draw[black, thin] 
            ([shift={(-5pt,-5pt)}]current bounding box.south west) 
            rectangle 
            ([shift={(5pt,5pt)}]current bounding box.north east);                  
            \node[above] at ([shift={(0,5pt)}]current bounding box.north) 
            {$(\mathcal{E}_1, \mathcal{E}_2) = (C_1, C_2)$};
    }
]
\& \& \& \& 
C_4 \arrow[loop above,"{\{R_{11},R_{12}\}}"] 
\arrow[rrdd,"{\{R_{11},R_{12}\}}"] 
\arrow[lllldd,bend right=30,"R_{12}"'] 
\arrow[dd,"R_{11}"] 
\& \& \& \& \\ \\
S_0 \arrow[rr,yshift=0.5ex,"R_1"] 
\& \& 
C_1 \arrow[ll,yshift=-0.5ex,"R_2"] 
\arrow[rr,"R_3"] 
\arrow[rrdd,xshift=0.3ex,yshift=0.3ex,"R_7"] 
\& \& 
S_1 \arrow[rr,yshift=0.5ex,"R_4"] 
\& \& 
C_2 \arrow[ll,yshift=-0.5ex,"R_5"] 
\arrow[rr,"R_6"] 
\& \& 0 \\ \\
\& \& \& \& 
C_3 \arrow[loop below,"R_9"] 
\arrow[lluu,xshift=-0.3ex,yshift=-0.3ex,"{\{R_8,R_9\}}"] 
\arrow[uu,"R_9"'] 
\& \& \& \&
\end{tikzcd}
}
\vspace{1em}
{\tiny
\begin{tikzcd}[ampersand replacement=\&,execute at end picture={
        \draw[black, thin] 
            ([shift={(-5pt,-5pt)}]current bounding box.south west) 
            rectangle 
            ([shift={(5pt,5pt)}]current bounding box.north east);                  
            \node[above] at ([shift={(0,5pt)}]current bounding box.north) 
            {$(\mathcal{E}_1, \mathcal{E}_2) = (C_2, C_1)$};
    }
]
\& \& \& \& 
C_4 \arrow[loop above,"{\{R_{11},R_{12}\}}"] 
\arrow[lldd,"{\{R_{11},R_{12}\}}"'] 
\arrow[lllldd,bend right=30,"R_{12}"'] 
\arrow[dd,"R_{11}"] 
\& \& \& \& \\ \\
S_0 \arrow[rr,yshift=0.5ex,"R_1"] 
\& \& 
C_1 \arrow[ll,yshift=-0.5ex,"R_2"] 
\arrow[rr,"R_3"] 
\& \& 
S_1 \arrow[rr,yshift=0.5ex,"R_4"] 
\& \& 
C_2 \arrow[ll,yshift=-0.5ex,"R_5"] 
\arrow[lldd,xshift=-0.3ex,yshift=0.3ex,"R_7"'] 
\arrow[rr,"R_6"] 
\& \& 0 \\ \\
\& \& \& \& 
C_3 \arrow[loop below,"R_9"] 
\arrow[rruu,xshift=0.3ex,yshift=-0.3ex,"{\{R_8,R_9\}}"'] 
\arrow[uu,"R_9"'] 
\& \& \& \&
\end{tikzcd}
}
\hspace{1em}
{\tiny
\begin{tikzcd}[ampersand replacement=\&,execute at end picture={
        \draw[black, thin] 
            ([shift={(-5pt,-5pt)}]current bounding box.south west) 
            rectangle 
            ([shift={(5pt,5pt)}]current bounding box.north east);                  
            \node[above] at ([shift={(0,5pt)}]current bounding box.north) 
            {$(\mathcal{E}_1, \mathcal{E}_2) = (C_2, C_2)$};
    }
]
\& \& \& \& 
C_4 \arrow[loop above,"{\{R_{11},R_{12}\}}"] 
\arrow[rrdd,"{\{R_{11},R_{12}\}}"] 
\arrow[lllldd,bend right=30,"R_{12}"'] 
\arrow[dd,"R_{11}"] 
\& \& \& \& \\ \\
S_0 \arrow[rr,yshift=0.5ex,"R_1"] 
\& \& 
C_1 \arrow[ll,yshift=-0.5ex,"R_2"] 
\arrow[rr,"R_3"] 
\& \& 
S_1 \arrow[rr,yshift=0.5ex,"R_4"] 
\& \& 
C_2 \arrow[ll,yshift=-0.5ex,"R_5"] 
\arrow[lldd,xshift=-0.3ex,yshift=0.3ex,"R_7"'] 
\arrow[rr,"R_6"] 
\& \& 0 \\ \\
\& \& \& \& 
C_3 \arrow[loop below,"R_9"] 
\arrow[rruu,xshift=0.3ex,yshift=-0.3ex,"{\{R_8,R_9\}}"'] 
\arrow[uu,"R_9"'] 
\& \& \& \&
\end{tikzcd}
}
\caption{The ALT-graphs for the dual-site bifunctional enzyme network \eqref{eq:twosite2} associated with the four possible choices of species $\mathcal{E}_1$ and $\mathcal{E}_2$: $(\mathcal{E}_1,\mathcal{E}_2)=(C_1,C_1)$, $(C_1,C_2)$, $(C_2,C_1)$, and $(C_2,C_2)$.}
\label{fig:alt-four-environments}
\end{figure}

\paragraph{Mechanisms with $(\mathcal{E}_1, \mathcal{E}_2) = (C_1, C_1)$ and $(\mathcal{E}_1, \mathcal{E}_2) = (C_1, C_2)$:} We start by considering critical autocatalytic reactions. The reaction $R_9 \in \mathcal{R}_A$ has exactly one catalytic instance which leaves the strongly connected component $\mathcal{Z}_1$. Similarly, all catalytic instances of the autocatalytic reactions $R_{11}, R_{12} \in \mathcal{R}_A$ leave the strongly connected component $\mathcal{Z}_3$. Consequently, no autocatalytic reactions are critical so that $\mathcal{R}_C = \emptyset$. 

Now consider critical strongly connected components. Consider $\mathcal{Z}_1 = \{ S_0, C_1, C_3\}$. There is a catalytic instance of $R_9 \in \mathcal{R}_A$ which leaves $\mathcal{Z}_1$; however, $\mathcal{Z}_1$ may not be disconnected from $0$ by the removal of this instance because $R_3$ connects $\mathcal{Z}_1$ to $\mathcal{Z}_2$ and $R_3$ is not autocatalytic. Consequently, $\mathcal{Z}_1$ is not a critical strongly connected component in these mechanisms. Now consider $\mathcal{Z}_3 = \{ C_4 \}$. In all four mechanisms, two catalytic instances of $R_{11} \in \mathcal{R}_A$ and two catalytic instances of $R_{12} \in \mathcal{R}_A$ leave $\mathcal{Z}_3$. Despite this, there is no way to disconnect $\mathcal{Z}_3$ from $0$ by the removal of at most one catalytic instance per autocatalytic reaction ($R_{11}$ or $R_{12}$) since the remaining instances connect $\mathcal{Z}_3$ to $0$. Consequently, $\mathcal{Z}_3$ is not critical in any of the four mechanisms. It follows that $\mathcal{Z}_C = \emptyset$.

Since $\mathcal{R}_C = \emptyset$ and $\mathcal{Z}_C = \emptyset$, it follows from Corollary \ref{cor:2} part 1. that the boundary steady state is universally locally asymptotically stable. This reaffirms the result established in \cite{joshi2026bistability}.

\paragraph{Mechanisms with $(\mathcal{E}_1, \mathcal{E}_2) = (C_2, C_1)$ and $(\mathcal{E}_1, \mathcal{E}_2) = (C_2, C_2)$:} $R_9 \in \mathcal{R}_A$ has no catalytic instances leaving the strongly connected component $\mathcal{Z}_2$ while $R_{11}$ and $R_{12}$ have instances which leave $\mathcal{Z}_3$. It follows that $\mathcal{R}_C = \{ R_9\}$. Identical analysis from the previous case shows that $\mathcal{Z}_C = \emptyset$. 

We now apply Algorithm \ref{alg:1} to remove $R_9$ from $\mathcal{R}_C$. By Heuristic 2., we favor a catalytic splitting over an autocatalytic splitting. Performing the catalytic splitting on the instance of $R_9$ from $C_3$ to $S_1$ yields the splitting contained in Figure \ref{fig:alt-decomposition-GF-GV} (top) for the $(\mathcal{E}_1, \mathcal{E}_2) = (C_2, C_1)$ mechanism and the splitting contained in Figure \ref{fig:alt-decomposition-GF-GV} (bottom) for the $(\mathcal{E}_1, \mathcal{E}_2) = (C_2, C_2)$ mechanism. In both cases, $\mathcal{G}_{-V}$ does not have any critical autocatalytic reactions or critical strongly connected components. Consequently, the conclusions of Theorem \ref{thm:1} may be applied, and the next-generation matrix may be computed using the associated ALT-matrices $F$ and $-V$. Further computational details are provided in Appendix \ref{app:h}.

\begin{figure}[t]
\centering
{\tiny
\begin{tikzcd}[ampersand replacement=\&,execute at end picture={
        \draw[black, thin] 
            ([shift={(-5pt,-5pt)}]current bounding box.south west) 
            rectangle 
            ([shift={(5pt,5pt)}]current bounding box.north east);                  
            \node[above] at ([shift={(0,5pt)}]current bounding box.north) {$\mathcal{G}_F$};
    }
]
\& \& \& \& 
C_4 \arrow[loop above,white,"{\{R_{11},R_{12}\}}"] 
\& \& \& \& \\ \\
S_0 \& \& C_1 \& \& S_1 \& \& C_2 \& \& 0 \\ \\
\& \& \& \& 
C_3 \arrow[loop below,"R_9"] \arrow[uu,"R_9"'] 
\& \& \& \&
\end{tikzcd}
}
\;
$\oplus$
\;
{\tiny
\begin{tikzcd}[ampersand replacement=\&,execute at end picture={
        \draw[black, thin] 
            ([shift={(-5pt,-5pt)}]current bounding box.south west) 
            rectangle 
            ([shift={(5pt,5pt)}]current bounding box.north east);                  
            \node[above] at ([shift={(0,5pt)}]current bounding box.north) {$\mathcal{G}_{-V}$};
    }
]
\& \& \& \& 
C_4 \arrow[loop above,"{\{R_{11},R_{12}\}}"] 
\arrow[lldd,"{\{R_{11},R_{12}\}}"'] 
\arrow[lllldd,bend right=30,"R_{12}"'] 
\arrow[dd,"R_{11}"] 
\& \& \& \& \\ \\
S_0 \arrow[rr,yshift=0.5ex,"R_1"] 
\& \& 
C_1 \arrow[ll,yshift=-0.5ex,"R_2"] 
\arrow[rr,"R_3"] 
\& \& 
S_1 \arrow[rr,yshift=0.5ex,"R_4"] 
\& \& 
C_2 \arrow[ll,yshift=-0.5ex,"R_5"] 
\arrow[lldd,xshift=-0.3ex,yshift=0.3ex,"R_7"'] 
\arrow[rr,"R_6"] 
\& \& 0 \\ \\
\& \& \& \& 
C_3 \arrow[loop below,white,"R_9"] 
\arrow[rruu,xshift=0.3ex,yshift=-0.3ex,"{\{R_8,R_9\}}"'] 
\& \& \& \&
\end{tikzcd}
}\hspace{1em}
{\tiny
\begin{tikzcd}[ampersand replacement=\&,execute at end picture={
        \draw[black, thin] 
            ([shift={(-5pt,-5pt)}]current bounding box.south west) 
            rectangle 
            ([shift={(5pt,5pt)}]current bounding box.north east);                  
            \node[above] at ([shift={(0,5pt)}]current bounding box.north) {$\mathcal{G}_F$};
    }
]
\& \& \& \& 
C_4 \arrow[loop above,white,"{\{R_{11},R_{12}\}}"] 
\& \& \& \& \\ \\
S_0 \& \& C_1 \& \& S_1 \& \& C_2 \& \& 0 \\ \\
\& \& \& \& 
C_3 \arrow[loop below,"R_9"] \arrow[uu,"R_9"'] 
\& \& \& \&
\end{tikzcd}
}
\;
$\oplus$
\;
{\tiny
\begin{tikzcd}[ampersand replacement=\&,execute at end picture={
        \draw[black, thin] 
            ([shift={(-5pt,-5pt)}]current bounding box.south west) 
            rectangle 
            ([shift={(5pt,5pt)}]current bounding box.north east);                  
            \node[above] at ([shift={(0,5pt)}]current bounding box.north) {$\mathcal{G}_{-V}$};
    }
]
\& \& \& \& 
C_4 \arrow[loop above,"{\{R_{11},R_{12}\}}"] 
\arrow[rrdd,"{\{R_{11},R_{12}\}}"] 
\arrow[lllldd,bend right=30,"R_{12}"'] 
\arrow[dd,"R_{11}"] 
\& \& \& \& \\ \\
S_0 \arrow[rr,yshift=0.5ex,"R_1"] 
\& \& 
C_1 \arrow[ll,yshift=-0.5ex,"R_2"] 
\arrow[rr,"R_3"] 
\& \& 
S_1 \arrow[rr,yshift=0.5ex,"R_4"] 
\& \& 
C_2 \arrow[ll,yshift=-0.5ex,"R_5"] 
\arrow[lldd,xshift=-0.3ex,yshift=0.3ex,"R_7"'] 
\arrow[rr,"R_6"] 
\& \& 0 \\ \\
\& \& \& \& 
C_3 \arrow[loop below,white,"R_9"] 
\arrow[rruu,xshift=0.3ex,yshift=-0.3ex,"{\{R_8,R_9\}}"'] 
\& \& \& \&
\end{tikzcd}
}
\caption{Autocatalytic decompositions of the ALT-graphs into $\mathcal{G}_F\oplus\mathcal{G}_{-V}$ for $(\mathcal{E}_1, \mathcal{E}_2) = (C_2, C_1)$ (top) and $(\mathcal{E}_1, \mathcal{E}_2) = (C_2, C_2)$ (bottom). In each case, $\mathcal{G}_F$ contains the autocatalytic transitions associated with $R_9$, while $\mathcal{G}_{-V}$ contains the remaining transitions. The two decompositions correspond to the two possible orientations of the transitions associated with the environment-dependent autocatalytic sets $\{R_{11},R_{12}\}$ and $\{R_8,R_9\}$.}
\label{fig:alt-decomposition-GF-GV}
\end{figure}

\end{example}

\subsection{Epidemiological Examples}
\label{sec:epiexamples}

\begin{table}[t]
\centering
\begin{NiceTabular}{ccccccc}[code-before = \rowcolors{2}{white}{gray!12}]
\toprule
\begin{tabular}{c}Epidemiological \\ interaction\end{tabular} & & \begin{tabular}{c}Chemical \\ reaction \end{tabular} & & \begin{tabular}{c}Infection-reduced\\reaction\end{tabular} & & \begin{tabular}{c} ALT-graph\\edges\end{tabular} \\
\midrule
Direct Infection & & \begin{tikzcd}[ampersand replacement=\&, baseline=(current bounding box.center)] I_i+S_i \arrow[r,"\beta/N"] \& 2I_i \end{tikzcd} & $\Longrightarrow$ & \begin{tikzcd}[ampersand replacement=\&, baseline=(current bounding box.center)] I_i \arrow[r] \& 2I_i \end{tikzcd} & $\Longrightarrow$ & \begin{tikzcd}[ampersand replacement=\&, baseline=(current bounding box.center)] I_i \arrow[loop left,"\beta/N"]  \end{tikzcd} \\[0pt]
Recovery & & \begin{tikzcd}[ampersand replacement=\&, baseline=(current bounding box.center)] I_i \arrow[r,"\gamma"] \& S_i \end{tikzcd} & $\Longrightarrow$ & \begin{tikzcd}[ampersand replacement=\&, baseline=(current bounding box.center)] I_i \arrow[r] \& 0 \end{tikzcd} & $\Longrightarrow$ & \begin{tikzcd}[ampersand replacement=\&, baseline=(current bounding box.center)] I_i \arrow[r,"\gamma"] \& 0 \end{tikzcd} \\[0pt]
Transition & & \begin{tikzcd}[ampersand replacement=\&, baseline=(current bounding box.center)] I_i \arrow[r,"\lambda"] \& I_j \end{tikzcd} & $\Longrightarrow$ & \begin{tikzcd}[ampersand replacement=\&, baseline=(current bounding box.center)] I_i \arrow[r] \& I_j \end{tikzcd} & $\Longrightarrow$ & \begin{tikzcd}[ampersand replacement=\&, baseline=(current bounding box.center)] I_i \arrow[r,"\lambda"] \& I_j \end{tikzcd} \\[0pt]
Cross Infection & & \begin{tikzcd}[ampersand replacement=\&, baseline=(current bounding box.center)] I_i + S_j \arrow[r,"\beta/N"] \& I_i + I_j \end{tikzcd} & $\Longrightarrow$ & \begin{tikzcd}[ampersand replacement=\&, baseline=(current bounding box.center)] I_i \arrow[r] \& I_i + I_j \end{tikzcd} & $\Longrightarrow$ & \begin{tikzcd}[ampersand replacement=\&, baseline=(current bounding box.center)] I_i \arrow[loop left,"\beta/N"] \arrow[r,"\beta/N"] \& I_j \end{tikzcd} \\
\bottomrule
\end{NiceTabular}
\caption{Correspondences between common interaction types in epidemiology and edges in an ALT-graph. We generically denote the ``infected'' states by $I_i$ and the ``non-infected'' states by $S_i$ but note that other classifications which go to zero when the infection vanishes may be included in this set (e.g., hospitalized [$I_i = H$], temporarily immune [$I_i = R$]). All four types listed have direct correspondences in an ALT-graph (see Table \ref{tab:jacobian}). Specifically, we have: (1) Direct infection events correspond to autocatalytic edges in the ALT-graph. Direct infection events are typical in single population models where infected individuals turn susceptible individual into the same type of infected individual. (2) Recovery events correspond to leak edges in the ALT-graph. These events correspond to the final transition where an individual enters a state which could exist without the infection (e.g., the final step in a multi-stage loss of temporary immunity). (3) Transition events correspond to transition edges in the ALT-graph. (4) Cross infection events correspond to Catalytic Transitions (Type I), which consist of an autocatalytic edge and a transition edge with the same label. Cross infection events occur when an infected individual turns a susceptible individual into a different type of infected individual, which is common for vector-borne illnesses (e.g., $S_H+I_V \to I_H+I_V$, where $H$ is human and $V$ is vector) or stratified population models (e.g., $I_i + S_j \to I_i + I_j$ where $i$ and $j$ represent distinct strata of a population such age or geographical location). Notably, the remaining two classifications---Catalytic Transition (Type II) and Catalytic Transition (Type III)---do not occur naturally in mathematical epidemiology. These types of edges are unique to biochemical applications.}
\label{tab:epidemiology}
\end{table}

Transitions in epidemiological models can be represented as interactions between species in chemical reaction networks. Consequently, common compartmental epidemiological models can be considered a subclass of biochemical reaction networks. When the resulting reaction networks are integer-valued and binary, the developments we have made on the next-generation matrix method in the biochemical setting can be applied in the epidemiological setting. In particular, we notice the following correspondence between infection events, interpreted as chemical reactions, and ALT-graph edges:
\begin{enumerate}
\item \textbf{Direct Infection:} Consider a transmission event between a susceptible individual and an infectious individual which converts a susceptible individual into \emph{the same type} of infectious individual. This corresponds to the interaction
\[\begin{tikzcd}
S_i + I_i \arrow[r,"\beta/N"] & 2I_i
\end{tikzcd}\]
This type of interaction is common in homogeneous population settings. Such interactions contribute the following to the ALT-graph
\[
\begin{tikzcd}
I_i \arrow[loop left, "\beta/N"]
\end{tikzcd}
\]
\item \textbf{Cross Infection:} Consider a transmission event between a susceptible individual and an infectious individual which converts a susceptible individual into \emph{a different type} of infectious individual. This corresponds to the interaction
\[\begin{tikzcd}
S_j + I_i \arrow[r,"\beta/N"] & I_i + I_j
\end{tikzcd}\]
This type of interaction is common in nonhomogeneous population (e.g., regionally- or age-stratified) and vector-borne illness models. In these settings, the interaction may correspond to an infectious individual in region $i$ infecting an individual in region $j$, or an individual of one species (e.g., a mosquito) infecting an individual of another species (e.g., a human). Such interactions contribute the following to the ALT-graph
\[
\begin{tikzcd}
I_i \arrow[loop left, "\beta/N"] \arrow[r, "\beta/N"] & I_j
\end{tikzcd}
\]
\end{enumerate}
The full correspondence between typical transition types in mathematical epidemiology and ALT-graph edges, including leak and transition edges, is made in Table \ref{tab:epidemiology}. Note that, since infection events necessarily retain the original infected individual and produce exactly one new infected individual, two common biochemical reaction types---Catalytic Transition (Type II) and Catalytic Transition (Type III) in Table \ref{tab:jacobian}---do not occur naturally in mathematical epidemiology. These notions of autocatalysis are consequently unique to biochemical applications.

\begin{example}
\label{ex:sir}
Consider the standard SIR (Susceptible-Infectious-Removed) network with reinfection, births, and deaths. This is typically represented by a network diagram of the form
\begin{equation}
\label{sir}
\begin{tikzcd}
& {\color{white}o} & {\color{white}o} & {\color{white}o} \\
{\color{white}o} \arrow[r,"\Lambda"] & |[draw, thick, black, minimum size=1cm]| S \arrow[r,yshift=0.15cm,"\beta/N"] \arrow[u,"\delta"] & |[draw, thick, black, minimum size=1cm]| I \arrow[r,"\gamma"] \arrow[u,"\delta"] \arrow[l,yshift=-0.15cm,dashed] & |[draw, thick, black, minimum size=1cm]| R \arrow[u,"\delta"] \arrow[ll,bend left=45,"\alpha"] 
\end{tikzcd}
\end{equation}
where $\Lambda > 0$ is the birth rate, $\delta$ is the death rate, $\beta$ is the transmission rate, $1/\gamma$ is the mean infectious period, and $N$ is the population size.

Network representations like \eqref{sir} conveniently map the transitions of individuals through their varied infectious/noninfectious states; however, they obscure the interaction form of the transmission edge, which requires both $S$ and $I$ to proceed. To represent \eqref{sir} as an interaction network on the species set $\{S, I, R \}$, we reformulate the interactions as:\\
\begin{equation}
\label{sir-interaction}
\left\{ \; \;
\begin{tikzcd}
0 \arrow[rr,yshift=0.5ex,"\Lambda"] & & S \arrow[ll,yshift=-0.5ex,"\delta"] & I \arrow[rr,"\delta"] & & 0 & S + I \arrow[rr,"\beta/N"] & & 2I \\[-0.1in]
R \arrow[rr,"\delta"] & & 0 & I \arrow[rr,"\gamma"] & & R & R \arrow[rr,"\alpha"] & & S 
\end{tikzcd}
\right.
\end{equation}

The interaction network \eqref{sir-interaction} has the siphon $\mathcal{X} = \{ I, R \}$ corresponding to the set of infected states. The ALT-graph of \eqref{sir-interaction} on $\{ I, R \}$ is given in Figure \ref{fig:alt-graph-sir} (left). The strongly connected components are $\mathcal{Z}_1 = \{ I \}$, $\mathcal{Z}_2 = \{ R \}$, and $\mathcal{Z}_3 = \{ 0 \}$.  There is a single instance of the autocatalytic reaction labeled $\beta/N$ which corresponds to ``new infections.'' This reaction is critical since its only instance is autocatalytic (i.e., the self-loop on $I$) and it has no catalytic instance leaving the strongly connected component containing $I$. Consequently, by Algorithm \ref{alg:1}, we must remove the reaction labeled $\beta/N$ from $\mathcal{G}$ and add it to $\mathcal{G}_F$. The resulting autocatalytic decomposition is shown in Figure \ref{fig:alt-graph-sir} (right). Note that $\mathcal{G}_{-V}$ does not contain any autocatalytic reactions so we are finished and the basic reproduction number may be computed. The matrices $F$ and $-V$ corresponding to $\mathcal{G}_F$ and $\mathcal{G}_{-V}$ are the same as those provided by the standard next-generation matrix method of \cite{van2002reproduction}. Further computational details are provided in Appendix \ref{app:h}.

\begin{figure}[t]
    \centering
    \[
    \begin{tikzcd}[
        ampersand replacement=\&,
        execute at end picture={
            \draw[black, thin]
                ([shift={(-5pt,-5pt)}]current bounding box.south west)
                rectangle
                ([shift={(5pt,5pt)}]current bounding box.north east);
            \node[above] at
                ([shift={(0,5pt)}]current bounding box.north) {$\mathcal{G}$};
        }
    ]
        I \arrow[loop left,"\beta/N"]
          \arrow[rr,"\gamma"]
          \arrow[rdd,"\delta"']
        \& \&
        R \arrow[ldd,"{\{\delta,\alpha\}}"] \\
        \& \& \\
        \& 0 \&
    \end{tikzcd}
    \;=\;
    \begin{tikzcd}[
        ampersand replacement=\&,
        execute at end picture={
            \draw[black, thin]
                ([shift={(-5pt,-5pt)}]current bounding box.south west)
                rectangle
                ([shift={(5pt,5pt)}]current bounding box.north east);
            \node[above] at
                ([shift={(0,5pt)}]current bounding box.north) {$\mathcal{G}_F$};
        }
    ]
        I \arrow[loop left,"\beta/N"]
        \& \&
        R \\
        \& \& \\
        \& 0 \&
    \end{tikzcd}
    \;\oplus\;
    \begin{tikzcd}[
        ampersand replacement=\&,
        execute at end picture={
            \draw[black, thin]
                ([shift={(-5pt,-5pt)}]current bounding box.south west)
                rectangle
                ([shift={(5pt,5pt)}]current bounding box.north east);
            \node[above] at
                ([shift={(0,5pt)}]current bounding box.north) {$\mathcal{G}_{-V}$};
        }
    ]
        I \arrow[rr,"\gamma"]
          \arrow[rdd,"\delta"']
        \& \&
        R \arrow[ldd,"{\{\delta,\alpha\}}"] \\
        \& \& \\
        \& 0 \&
    \end{tikzcd}
    \]
    \caption{The ALT-graph $\mathcal{G}$ for the SIR network \eqref{sir-interaction} and its associated autocatalytic decomposition $\mathcal{G}_F \oplus \mathcal{G}_{-V}$. The graph decomposes as the direct sum $\mathcal{G}=\mathcal{G}_F\oplus\mathcal{G}_{-V}$, where $\mathcal{G}_F$ contains the $\beta/N$ self-interaction on $I$, while $\mathcal{G}_{-V}$ contains the remaining transitions involving $\gamma$, $\delta$, and $\{\delta,\alpha\}$.}
    \label{fig:alt-graph-sir}
\end{figure}

\end{example}

\begin{example}
\label{ex:vectorborne}
Consider the following vector-borne disease spread model proposed for dengue fever transmission between mosquitoes and humans in \cite{feng1997competitive} and further studied in \cite{van2002reproduction}:
\begin{equation}
\label{sir-vector}
\begin{tikzcd}
& {\color{white}o} & {\color{white}o} & {\color{white}o} \\
{\color{white}o} \arrow[r,"\alpha_s"] & |[draw, thick, black, minimum size=1cm]| S \arrow[r,yshift=0.15cm,"\beta_s"] \arrow[u,"\delta_s"] & |[draw, thick, black, minimum size=1cm]| I \arrow[ld,dashed] \arrow[l,yshift=-0.15cm,"\gamma_s"] \arrow[u,"\delta_s"] \\
{\color{white}o} \arrow[r,"\alpha_m"] & |[draw, thick, black, minimum size=1cm]| M \arrow[r,"\beta_m"] \arrow[d,"\delta_m"] & |[draw, thick, black, minimum size=1cm]| V \arrow[lu,dashed] \arrow[d,"\delta_m"] \\
& {\color{white}o} & {\color{white}o} & {\color{white}o}
\end{tikzcd}
\end{equation}
where $\alpha_s> 0$ is the human birth rate, $\delta_s$ is the human death rate, $\beta_s > 0$ is the human infection rate, $1/\gamma_s$ is the human mean infectious period, $\alpha_m > 0$ is the mosquito birth rate, $\delta_m > 0$ is the mosquito death rate, and $\beta_m > 0$ is the mosquito infection rate.

The system \eqref{sir-vector} can be equivalently represented by the following chemical interaction network:
\begin{equation}
\label{vector}
\left\{ \; \;
\begin{tikzcd}
I \arrow[rr,"\delta_s"] & & 0 \arrow[rr,yshift=0.5ex,"\alpha_s"] & & S \arrow[ll,yshift=-0.5ex,"\delta_s"] & S + V \arrow[rr,"\beta_s"] & & I + V & & M + I \arrow[ll,"\beta_m"'] \\[-0.1in]
V \arrow[rr,"\delta_m"] & & 0 \arrow[rr,yshift=0.5ex,"\alpha_m"] & & M \arrow[ll,yshift=-0.5ex,"\delta_m"] & I \arrow[rr,"\gamma_s"] & & S & & \\[-0.1in]
\end{tikzcd}
\right.
\end{equation}

\begin{figure}[t]
\centering
\begin{tikzcd}[ampersand replacement=\&,execute at end picture={
        \draw[black, thin] 
            ([shift={(-5pt,-5pt)}]current bounding box.south west) 
            rectangle 
            ([shift={(5pt,5pt)}]current bounding box.north east);                  
            \node[above] at ([shift={(0,5pt)}]current bounding box.north) {$\mathcal{G}$};
    }
]
I \arrow[loop left,"\beta_m"] \arrow[rr,yshift=0.1cm,"\beta_m"] \arrow[rdd,"{\{\delta_s,\gamma_s\}}"'] 
\& \& 
V \arrow[loop right,"\beta_s"] \arrow[ll,yshift=-0.1cm,"\beta_s"] \arrow[ldd,"\delta_m"] \\
\& \& \\
\& 0 \&
\end{tikzcd}
\; = \;
\begin{tikzcd}[ampersand replacement=\&,execute at end picture={
        \draw[black, thin] 
            ([shift={(-5pt,-5pt)}]current bounding box.south west) 
            rectangle 
            ([shift={(5pt,5pt)}]current bounding box.north east);                  
            \node[above] at ([shift={(0,5pt)}]current bounding box.north) {$\mathcal{G}_F$};
    }
]
I \arrow[loop left,"\beta_m"] \arrow[rr,"\beta_m"]  
\& \& 
V \\
\& \& \\
\& 0 \&
\end{tikzcd}
\; $\oplus$ \;
\begin{tikzcd}[ampersand replacement=\&,execute at end picture={
        \draw[black, thin] 
            ([shift={(-5pt,-5pt)}]current bounding box.south west) 
            rectangle 
            ([shift={(5pt,5pt)}]current bounding box.north east);                  
            \node[above] at ([shift={(0,5pt)}]current bounding box.north) {$\mathcal{G}_{-V}$};
    }
]
I \arrow[rdd,"{\{\delta_s,\gamma_s\}}"'] 
\& \& 
V \arrow[loop right,"\beta_s"] \arrow[ll,"\beta_s"'] \arrow[ldd,"\delta_m"] \\
\& \& \\
\& 0 \&
\end{tikzcd}

\caption{The ALT-graph $\mathcal{G}$ for the vector-borne disease spread network \eqref{vector} and its associated autocatalytic decomposition
$\mathcal{G}_F \oplus \mathcal{G}_{-V}$. The graph decomposes as the direct sum
$\mathcal{G}=\mathcal{G}_F\oplus\mathcal{G}_{-V}$, where $\mathcal{G}_F$ contains
and autocatalytic and catalytic instance of $\beta_m$ and an autocatalytic instance of $\beta_s$ , while $\mathcal{G}_{-V}$ contains the remaining transitions involving $\gamma_s$, $\delta_m$, and
$\{\delta_s,\gamma_s\}$.}
\label{fig:alt-vector}
\end{figure}

The interaction network \eqref{vector} has the siphon $\mathcal{X} = \{ I, V \}$ corresponding to the set of infected states for both the human and mosquito populations. The ALT-graph of \eqref{vector} on $\{ I, V \}$ is given in Figure \ref{fig:alt-vector} (left). We have the strongly connected components $\mathcal{Z}_1 = \{ I, V \}$ and $\mathcal{Z}_2 = \{ 0 \}$. The reactions labeled $\beta_m$ and $\beta_s$ are both autocatalytic, with one autocatalytic and one catalytic instance each. Since there are no catalytic instances which leave $\mathcal{Z}_1$, both autocatalytic reactions are critical so that $\mathcal{R}_C \not= \emptyset$. Furthermore, since there is a directed path from $\mathcal{Z}_1$ to $0$, there are no critical strongly connected components so that $\mathcal{Z}_C = \emptyset$.

Applying Algorithm \ref{alg:1}, we identify that both $I$ and $V$ are the targets of multiple instances of autocatalytic reactions. There is no preference between the target vertices $I$ and $V$; however, Heuristic 1(b) dictates that we perform the available catalytic splitting rather than the autocatalytic splitting. Performing the catalytic splitting of $\beta_m$ leading to $V$ gives the autocatalytic decomposition $\mathcal{G}_F \oplus \mathcal{G}_{-V}$ given in Figure \ref{fig:alt-vector} (right). The reformulated $\mathcal{G}_{-V}$ has the autocatalytic reaction $\beta_s$; however, this autocatalytic reaction is not critical in $\mathcal{G}_{-V}$ because the strong linkage classes have changed to $\mathcal{Z}_1 = \{ I \}$, $\mathcal{Z}_2 = \{V\}$, and $\mathcal{Z}_3 = \{ 0 \}$ and there is an instance from $\mathcal{Z}_2$ to $\mathcal{Z}_1$. Consequently, $\mathcal{R}_C = \emptyset$. Similarly, we have that there are no critical strongly connected components since the edge labeled $\delta_m$ leads from $\mathcal{Z}_2$ to $0$, making disconnection from $0$ impossible. Since $\mathcal{R}_C = \emptyset$ and $\mathcal{Z}_C = \emptyset$, we are done. The resulting matrix $F$ is rank 1. Further details on the computations are contained in Appendix \ref{app:h}.

We make the following note about the application of Algorithm \ref{alg:1} to this example. Suppose we had prioritized removing all autocatalytic reactions in one step, rather than iteratively as in Algorithm \ref{alg:1}. If we had simultaneously performed two autocatalytic splittings of $\beta_m$ and $\beta_s$, or simultaneously performed two catalytic splittings of $\beta_m$ and $\beta_s$, the resulting matrix $F$ would have rank 2. The results in greater computational work in determining $\rho(FV^{-1})$ than the rank-1 splitting produced in Figure \ref{fig:alt-vector}. This emphasizes the importance of following the algorithm iteratively and applying the heuristics, so the rank rises minimally at each step. 

\end{example}

\section{Conclusions}
\label{sec:conclusions}

This paper addresses a significant recent focus in mathematical biology: how methods from one subdiscipline (e.g., mathematical epidemiology) can be applied to another (e.g., mathematical biochemistry). Recent work has shown that the next-generation matrix method, classically used to compute the basic reproduction number in mathematical epidemiology, can be used to evaluate the stability of boundary steady states in mathematical biochemistry. Widespread application, however, has been hindered by the significant care which must be taken in the process to ensure mathematical validity and computational feasibility \cite{avram2024advancing,johnston2026boundary}. Furthermore, the reproduction number produced in the biochemical context did not carry the same direct biological interpretation as in the epidemiological one, where the value corresponds to the number of secondary infections produced by a single typical primary infection.

In this paper, we have significantly addressed these deficiencies. We have presented  a method (Theorem \ref{thm:1} and Algorithm \ref{alg:1}) for performing Jacobian splittings $J = F - V$ that produces valid next-generation matrices $FV^{-1}$ for computing reproduction numbers $\rho(FV^{-1})$ in both mathematical epidemiology and mathematical biochemistry. The method involves constructing the ALT-graph (autocatalysis-leak-transition graph) (Definition \ref{def:alt-graph}) and removing critical autocatalytic reactions and critical strongly connected components according to specific splitting rules (Definition \ref{def:splitting}). The resulting decomposition defines the matrices $F$ and $V$ (Definition \ref{def:alt-matrix}) so that a valid next-generation matrix may be computed (Theorem \ref{thm:1}). The method also relates the key feature of boundary stability and instability in biochemical reaction systems to the growing literature on autocatalysis. We demonstrated the method on several examples from mathematical biochemistry and mathematical epidemiology. By producing low-rank matrices $F$, the method reduces the computational cost of evaluating $\rho(FV^{-1})$ relative to splitting methods that do not minimize the rank of $F$, while guaranteeing validity of the next-generation matrix $FV^{-1}$.

Future work will focus on the following ongoing projects:
\begin{enumerate}
\item[] \textbf{Computing Rank-$1$ Reproduction Numbers:} Even in complex biochemical reaction systems with boundary steady states, empirical evidence from the examples in this paper and related work \cite{avram2024advancing,johnston2026boundary} suggests that splittings $J = F - V$ leading to a valid next-generation matrix $FV^{-1}$ where $F$ is rank-$1$ are frequently attainable. In such cases, the computation of $\rho(FV^{-1})$ simplifies to $\operatorname{tr}(FV^{-1})$, which often reduces to a single nonzero entry. 
Future work will focus on:
\begin{enumerate}
\item optimizing Algorithm \ref{alg:1} to yield minimal-rank next-generation matrices $FV^{-1}$; and
\item developing specialized algorithms for computing $\rho(FV^{-1})$ for rank-$1$ matrices $F$.
\end{enumerate}
\item[] \textbf{Applications to Bifunctional Enzyme Models:} Recent work in biochemical reaction networks, particularly work related to absolute concentration robustness \cite{joshi2024bifunctional,avram2024advancing,joshi2026bifunctional,joshi2026bistability} and discrete extinction events \cite{agarwal2026extinction,johnston2017computational,johnston2018conditions}, has uncovered numerous examples of mechanisms with nontrivial boundary steady states. The results of this paper provide a systematic approach to assessing the parameter-dependent stability regimes of the corresponding boundary steady states. Immediate future work will focus on applications of Theorem \ref{thm:1} to $n$-site bifunctional enzyme motifs, an extension of those covered in \cite{joshi2026bistability}, and distributed---rather than processive---dual-site enzyme motifs \cite{markevich2004signaling}.
\item[] \textbf{Alternative Kinetic Assumptions:} In this work, we have restricted ourselves to chemical reaction networks that are integer-valued and binary; however, many modeling frameworks have been recently studied which allow more general stoichiometric and kinetic-order coefficients, such as power-law kinetics \cite{fogler1999elements,fortun2019deficiency}, generalized mass-action kinetics \cite{muller2012generalized,johnston2014translated}, and S-systems \cite{savageau1969biochemical,knorre1979ma}. While these kinetic frameworks differ from those considered here---and the ALT-graph rules (a)--(f) of Definition \ref{def:alt-graph} would need to be extended to accommodate non-binary stoichiometries and more general kinetic orders---the same principles of splitting the Jacobian and attaining a matrix $V$ satisfying the conditions of Theorem \ref{thm:ngm2} remain valid. Future work will focus on adapting the ALT-graph approach to networks with these more general kinetic frameworks.
\end{enumerate}

\paragraph{Acknowledgments} MDJ and PJC are supported by National Science Foundation Grant No. DMS-2213390.

\bibliographystyle{unsrt}

\appendix
\section{Proof of Theorem \ref{thm:2}}
\label{app:a}

\begin{proof}[Proof of Theorem \ref{thm:2}]
Let $c^* \in \partial \mathbb{R}_{\geq 0}^n$ be a boundary steady state corresponding to the siphon $\mathcal{X}$. We reindex according to $\mathcal{X}$ so that $x^*_i = 0$ and $y^*_i > 0$. To prove that $-J$ is a $Z$-matrix, it suffices to show that the off-diagonal entries of the restricted Jacobian are nonnegative, i.e., $J_{ij} \geq 0$ for all distinct $X_i, X_j \in \mathcal{X}$. Differentiating the $i$th equation of \eqref{eq:crs2} with respect to $x_j$ yields:
\[
J_{ij} = \sum_{R_k \in \mathcal{R}} (\beta'_{ik} - \alpha'_{ik}) \partial_j R_k.
\]
We evaluate the partial derivative for each reaction $R_k \in \mathcal{R}$ using assumptions \textbf{(K1)}--\textbf{(K5)}:
\begin{itemize}
    \item If $\alpha'_{jk} = 0$, then the species $X_j$ is not a reactant, so $\partial_j R_k = 0$ by \textbf{(K3)}.
    \item If $\alpha_{jk}'>0$ and $x^*_\ell=0$ for some $\ell$ with $\alpha_{\ell k}'-\delta_{jl}>0$, then reaction $R_k$ either has another reactant $X_\ell$ in the siphon whose concentration is zero, or the multiplicity of the reactant $X_j$ is 2. Thus, $\partial_jR_k=0$ by \textbf{(K5)}.
    \item If $\alpha'_{jk} = 1$ and $X_j$ is the unique reactant of $R_k$ within $\mathcal{X}$, then all other required reactants lie outside the siphon and have positive concentrations (i.e., $y^*_\ell > 0$) which implies $\partial_j R_k > 0$ by \textbf{(K4)}. Since the network is binary and $X_j$ is already a reactant, the siphon species $X_i$ cannot be a reactant, i.e., $\alpha'_{ik} = 0$. This ensures that the stoichiometric coefficient is nonnegative so that $\beta'_{ik} - \alpha'_{ik} = \beta'_{ik} \geq 0$.
\end{itemize}
Every non-zero term in the sum for $J_{ij}$ is therefore the product of a nonnegative coefficient $\beta_{ik} \geq 0$ and a strictly positive derivative. It follows that $J_{ij} \geq 0$ for all $i \neq j$, confirming that $-J$ has non-positive off-diagonal entries and is a $Z$-matrix.
\end{proof}

\section{Proof of Theorem \ref{thm:jacobian}}
\label{app:b}

\begin{proof}[Proof of Theorem \ref{thm:jacobian}]
Let $c^* \in \partial \mathbb{R}_{\geq 0}^n$ be the boundary steady state corresponding to the siphon $\mathcal{X}$. We reindex according to $\mathcal{X}$ so that $x^*_i = 0$ and $y^*_i > 0$, and $c^* = (x^*,y^*) \in \partial \mathbb{R}_{\geq 0}^n$. Differentiating the $j$th equation of \eqref{eq:crs2} with respect to $x_i$ yields:
\[
J_{ji} = \sum_{R_k \in \mathcal{R}} (\beta'_{jk} - \alpha'_{jk}) \partial_i R_k.
\]
We show that $[L(\mathcal{G})]_{ji} = J_{ji}$ for all $i, j = 1, \dots, m$ by considering the off-diagonal and diagonal entries separately. Note that reactions whose $\mathcal{X}$-reduced source contains two reactant species or one reactant with multiplicity 2 make no contribution to $J$ by \textbf{(K5)}.

\paragraph{Off-diagonal entries:} For $i \neq j$, $X_j$ is not a reactant (i.e., $\alpha'_{jk} = 0$), so $J_{ji} = \sum_{R_k} \beta'_{jk} \partial_i R_k$. According to Definition \ref{def:alt-graph}, each contributing reaction $R_k$ is assigned to the multiset $\phi(e_{ij})$ with a multiplicity equal to its product stoichiometric coefficient $\beta'_{jk}$ (specifically, $1$ for types (c), (d), and (f); $2$ for type (e); and $0$ for types (a) and (b)). Under the multiset sum interpretation of Definition \ref{def:alt-matrix}, we obtain:
\[
[L(\mathcal{G})]_{ji} = \sum_{R_k \in \phi(e_{ij})} \partial_i R_k = \sum_{R_k \in \mathcal{R}} \beta'_{jk} \partial_i R_k = J_{ji}.
\]

\paragraph{Diagonal entries:} For $i=j$, we have $\alpha'_{ik} = 1$ so that $J_{ii} = \sum_{R_k} (\beta'_{ik} - 1) \partial_i R_k$. We evaluate the coefficient $\beta'_{ik} - 1$ for each reaction type and compare it with the terms of the ALT-matrix diagonal element:
\[
[L(\mathcal{G})]_{ii} = \sum_{R_k \in \phi(e_{ii})} \partial_i R_k - \sum_{R_k \in \phi(e_{i0})} \partial_i R_k - \sum_{\substack{p=1 \\ p\neq i}}^m \sum_{R_k \in \phi(e_{ip})} \partial_i R_k
\]
Using the multiset assignment rules of Definition \ref{def:alt-graph}:
\begin{itemize}
    \item If $R_k': X_i \to 2X_i$: We have that $\beta'_{ik} - 1 = 1$ while $R_k \in \phi(e_{ii})$ contributes $+1$ to $[L(\mathcal{G})]_{ii}$.
    \item If $R_k': X_i \to 0$: We have that $\beta'_{ik} - 1 = -1$ while $R_k \in \phi(e_{i0})$ contributes $-1$ to $[L(\mathcal{G})]_{ii}$.
    \item If $R_k': X_i \to X_j$: We have that $\beta'_{ik} - 1 = -1$ while $R_k \in \phi(e_{ij})$ contributes $-1$ to $[L(\mathcal{G})]_{ii}$.
    \item If $R_k': X_i \to X_i + X_j$: We have that $\beta'_{ik} - 1 = 0$ while $R_k \in \phi(e_{ii})$ contributes $+1$ to $[L(\mathcal{G})]_{ii}$ and $R_k \in \phi(e_{ij})$ contributes $-1$ to $[L(\mathcal{G})]_{ii}$, which sums to $0$.
    \item If $R_k': X_i \to 2X_j$: We have that $\beta'_{ik} - 1 = -1$ while $R_k \in \phi(e_{ii})$ contributes $+1$ to $[L(\mathcal{G})]_{ii}$ and $\{R_k,R_k\} \subseteq \phi(e_{ij})$ contributes $-2$, which sums to $-1$.
    \item If $R_k': X_i \to X_j + X_{\ell}$: We have that $\beta'_{ik} - 1 = -1$ while $R_k \in \phi(e_{ii})$ contributes $+1$ to $L_{ii}$, $R_k \in \phi(e_{ij})$ contributes $-1$ to $[L(\mathcal{G})]_{ii}$, and $R_k \in \phi(e_{i\ell})$ contributes $-1$ to $L_{ii}$, which sums to $-1$.
\end{itemize}
Since the net coefficient of $\partial_i R_k$ in $[L(\mathcal{G})]_{ii}$ matches $\beta'_{ik} - 1$ in all possible cases, we have $[L(\mathcal{G})]_{ii} = J_{ii}$. Thus, $L(\mathcal{G}) = J$.
\end{proof}

\section{Proof of Lemma \ref{lemma:multiset}}
\label{app:c}

\begin{proof}[Proof of Lemma \ref{lemma:multiset}]
Let $L(\mathcal{G}_1)$, $L(\mathcal{G}_2)$ and $L(\mathcal{G}_1\oplus\mathcal{G}_2)$ be the ALT-matrices corresponding to $\mathcal{G}_1$, $\mathcal{G}_2$ and $\mathcal{G}_1\oplus\mathcal{G}_2$ respectively. We prove that $[L(\mathcal{G}_1\oplus\mathcal{G}_2)]_{ji}=[L(\mathcal{G}_1)]_{ji}+[L(\mathcal{G}_2)]_{ji}$ for all $j,i=1,\ldots,m$.
\paragraph{Off-diagonal entries:}
For $i\neq j$, by Definition \ref{def:alt-matrix}, we have:
\[
[L(\mathcal{G}_1\oplus\mathcal{G}_2)]_{ji}=\sum_{R_k\in\phi_{\oplus}(e_{ij})}\partial_iR_k.
\]
Since $\phi_{\oplus}(e_{ij})=\phi_1(e_{ij})\oplus\phi_2(e_{ij})$ and multiset summation counts repeated labels with multiplicity, we have:
\[
\sum_{R_k\in\phi_{\oplus}(e_{ij})}\partial_iR_k=\sum_{R_k\in\phi_{1}(e_{ij})}\partial_iR_k+\sum_{R_k\in\phi_{2}(e_{ij})}\partial_iR_k.
\]
Therefore, $[L(\mathcal{G}_1\oplus\mathcal{G}_2)]_{ji}=[L(\mathcal{G}_1)]_{ji}+[L(\mathcal{G}_2)]_{ji}$.
\paragraph{Diagonal entries:}
For $i=j$, again by Definition \ref{def:alt-matrix}, we have:
\[
[L(\mathcal{G}_1\oplus\mathcal{G}_2)]_{ii}=\sum_{R_k \in \phi_{\oplus}(e_{ii})} \partial_i R_k - \sum_{R_k \in \phi_{\oplus}(e_{i0})} \partial_i R_k - \sum_{\substack{p=1 \\ p\neq i}}^m \sum_{R_k \in \phi_{\oplus}(e_{ip})} \partial_i R_k
\]
Again, we can use the multiset summation property of counting repeated labels with multiplicity to divide the contributions of $\mathcal{G}_1$ and $\mathcal{G}_2$. Hence:
\[
[L(\mathcal{G}_1\oplus\mathcal{G}_2)]_{ii}=\left[\sum_{R_k \in \phi_{1}(e_{ii})} \partial_i R_k - \sum_{R_k \in \phi_{1}(e_{i0})} \partial_i R_k - \sum_{\substack{p=1 \\ p\neq i}}^m \sum_{R_k \in \phi_{1}(e_{ip})} \partial_i R_k\right]\]

\[
+\left[\sum_{R_k \in \phi_{2}(e_{ii})} \partial_i R_k - \sum_{R_k \in \phi_{2}(e_{i0})} \partial_i R_k - \sum_{\substack{p=1 \\ p\neq i}}^m \sum_{R_k \in \phi_{2}(e_{ip})} \partial_i R_k\right]
\]
The first bracket is $[L(\mathcal{G}_1)]_{ii}$ and the second one is $[L(\mathcal{G}_2)]_{ii}$. Therefore, $[L(\mathcal{G}_1\oplus\mathcal{G}_2)]_{ii}=[L(\mathcal{G}_1)]_{ii}+[L(\mathcal{G}_2)]_{ii}$. Since this is true for every entry, we can conclude that $L(\mathcal{G}_1\oplus\mathcal{G}_2)=L(\mathcal{G}_1)+L(\mathcal{G}_2).$
\end{proof}

\section{Proof of Theorem \ref{thm:1}}
\label{app:d}

\begin{proof}[Proof of Theorem \ref{thm:1}]
Let $J$ be the Jacobian of the reduced system with respect to $\mathcal{X}$ and evaluated at $c^*=(x^*,y^*)$. Also, let $\mathcal{G}=(\mathcal{V},\mathcal{E},\Omega,\phi)$ be the ALT-graph of the network, which has an autocatalytic decomposition $\mathcal{G}=\mathcal{G}_F\oplus \mathcal{G}_{-V}$, with $\mathcal{G}_F=(\mathcal{V},\mathcal{E}_F,\Omega,\phi_F)$ and $\mathcal{G}_{-V}=(\mathcal{V},\mathcal{E}_{-V},\Omega,\phi_{-V})$. Now let $L(\mathcal{G})$, $L(\mathcal{G}_F)$ and $L(\mathcal{G}_{-V})$ be the ALT-matrices of the full graph, the $F$ graph and the $-V$ graph respectively. By Theorem \ref{thm:jacobian}, the ALT-matrix of $\mathcal{G}$ coincides with $J$, therefore, $L(\mathcal{G})=J$.
Because the terms of an ALT-matrix are linear with respect to multiset summation, by Lemma \ref{lemma:multiset} we get:
\[
L(\mathcal{G})=L(\mathcal{G}_F\oplus\mathcal{G}_{-V})=L(\mathcal{G}_F)+L(\mathcal{G}_{-V})
\]
Defining $F=L(\mathcal{G}_F)$ and $-V=L(\mathcal{G}_{-V})$, we have:
\[
J=L(\mathcal{G})=F+(-V)=F-V
\]

To prove that $F$ is nonnegative, we use the assumption that the splitting $\mathcal{G}=\mathcal{G}_F\oplus\mathcal{G}_{-V}$ is an autocatalytic decomposition. By Definition \ref{def:splitting}, this implies that $\mathcal{G}_F$ is built from autocatalytic splittings and catalytic splittings so that 
\[\mathcal{G} = \mathcal{G}_{F_1} \oplus \mathcal{G}_{-V_1} = \mathcal{G}_{F_1} \oplus ( \mathcal{G}_{F_2} \oplus \mathcal{G}_{-V_2}) = \cdots = (\mathcal{G}_{F_1} \oplus \cdots \oplus \mathcal{G}_{F_p} ) \oplus \mathcal{G}_{-V_p} =\mathcal{G}_F \oplus \mathcal{G}_{-V}\]
where $\mathcal{G}_F = \mathcal{G}_{F_1} \oplus \cdots \oplus \mathcal{G}_{F_p}$, $\mathcal{G}_{-V} = \mathcal{G}_{-V_p}$, and each $\mathcal{G}_{-V_{\ell-1}} = \mathcal{G}_{F_{\ell}} \oplus \mathcal{G}_{-V_{\ell}}$ is either an autocatalytic or catalytic splitting. Note that an autocatalytic splitting places one autocatalytic instance into $\mathcal{G}_{F_{\ell}}$, while a catalytic splitting places one pair of autocatalytic and catalytic instances into $\mathcal{G}_{F_{\ell}}$. We have the following cases:
\begin{enumerate}
\item \textbf{Autocatalytic splitting $\mathcal{G}_{F_{\ell}} \oplus \mathcal{G}_{-V_{\ell}}$:} $\mathcal{G}_{F_{\ell}}$ only contains one self-loop $(X_i\rightarrow2X_i)$ and, therefore, the contribution to $L(\mathcal{G}_{F_{\ell}})$ is only $[L(\mathcal{G}_{F_{\ell}})]_{ii}=\partial_iR_k>0$ by Definition \ref{def:alt-matrix}.
\item \textbf{Catalytic splitting $\mathcal{G}_{F_{\ell}} \oplus \mathcal{G}_{-V_{\ell}}$:} $\mathcal{G}_{F_{\ell}}$ only contains one self-loop and one transition $(X_i\rightarrow 2X_i$ and $X_i\rightarrow X_j)$ and, therefore, the contribution to $L(\mathcal{G}_{F_{\ell}})$ is $[L(\mathcal{G}_{F_{\ell}})]_{ii}=\partial_iR_k-\partial_iR_k=0$ and $[L(\mathcal{G}_{F_{\ell}})]_{ji} = \partial_iR_k>0$ by Definition \ref{def:alt-matrix}.
\end{enumerate}
Since every possible component of the autocatalytic decomposition contributes to a nonnegative $L(\mathcal{G}_{F_{\ell}})$ matrix and $\mathcal{E}_F\neq \emptyset$ guarantees that there is at least one strictly positive entry, we have 
\[F = L(\mathcal{G}_F) = L(\mathcal{G}_{F_1} \oplus \cdots \oplus \mathcal{G}_{F_p}) = L(\mathcal{G}_{F_1}) + \cdots + L(\mathcal{G}_{F_p}) \]
is nonnegative and nonzero.

We now consider $L(\mathcal{G}_{-V}) = -V$. 
The off-diagonal entries of $L(\mathcal{G}_{-V})$ are:
\[[L(\mathcal{G}_{-V})]_{ji}=\sum_{R_k\in \phi_{-V}(e_{ij})}\partial_iR_k,\]
for $i\neq j$. Each of the terms of the sum is nonnegative, which implies that $[L(\mathcal{G}_{-V})]_{ji}\geq 0$. Because $-V=L(\mathcal{G}_{-V})$, we have that $V_{ji}\leq 0$ for $i\neq j$. Hence, $V$ is a $Z$-matrix.

Now let \(\mathcal{Z}_1,\ldots,\mathcal{Z}_q\) be the strongly connected components of \(\mathcal{G}_{-V}\) restricted to the vertices in \(\mathcal X\) (i.e., we do not include the vertex $0$). The condensation graph of \(\mathcal{G}_{-V}\) is acyclic, and therefore its strongly connected components can be ordered topologically (Section 6.2, \cite{horn2012matrix}). Permuting the species according to this ordering gives a permutation matrix \(P\) such that
\[
PVP^T=
\begin{pmatrix}
A_1 & B_{12} & \cdots & B_{1q}\\
0 & A_2 & \cdots & B_{2q}\\
\vdots & \vdots & \ddots & \vdots\\
0 & 0 & \cdots & A_q
\end{pmatrix}.
\]
Each diagonal block $A_h$ is an irreducible $Z$-matrix which corresponds to a single strongly connected component $\mathcal{Z}_h$. The off-diagonal blocks $B_{ij}$ are nonpositive because $V$ is a $Z$-matrix. To satisfy the conditions of Theorem \ref{thm:ngm2}, we prove 
that $\mathbbm{1}^TA_h$ is nonnegative and nonzero, i.e., each column sum of $A_h$ is nonnegative and at least one column sum is strictly positive. 

We now consider the contribution of each reaction type in Table \ref{tab:jacobian} to $\mathbbm{1}^T A_h$. We note that the possible contributions of the reactions $R_k \in \mathcal{R}$ to $\mathbbm{1}^T A_h$ are given by the negatives of the Jacobian column in Table \ref{tab:jacobian} restricted to the component $\mathcal{Z}_h$. Specifically, when restricting to $A_h$, if $X_j\not\in \mathcal{Z}_h$ or $X_{\ell} \not\in \mathcal{Z}_h$, we may ignore the corresponding rows in the Jacobian. We have the following:
\begin{enumerate}
\item \textbf{Autocatalytic or Catalytic Transition (Types I, II, or III) with $X_j, X_{\ell} \in \mathcal{Z}_h$:} Since there are no catalytic instances associated with the reaction $R_k \in \mathcal{R}$ which leave $\mathcal{Z}_h$, we may sum over the negative of the whole vector in the Jacobian entry column of Table \ref{tab:jacobian}. In all cases, the contribution of the reaction to the $i$th column of $\mathbbm{1}^T A_h$ is $-\partial_i R_k < 0$.
\item \textbf{Catalytic Transition (Type I) with $X_j \not\in \mathcal{Z}_h$:} The only relevant entry is the $[A_h]_{ii} = 0$ entry (corresponding to $J_{ii} = 0$ in Table \ref{tab:jacobian}) so that the contribution of the reaction to the $i$th column of $\mathbbm{1}^T A_h$ is $0$.
\item \textbf{Transition with $X_j \in \mathcal{Z}_h$ or Catalytic Transition (Type III) with $X_j \in \mathcal{Z}_h$ and $X_\ell \not\in \mathcal{Z}_h$:} The only relevant nonzero entries are the $[A_h]_{ii} = \partial_i R_k$ entry (corresponding to $J_{ii} = - \partial_i R_k$ in Table \ref{tab:jacobian}) and the $[A_h]_{ji} = - \partial_i R_k$  entry (corresponding to $J_{ji} = \partial_i R_k$) so that the contribution of the reaction to the $i$th column of $\mathbbm{1}^T A_h$ is $\partial_i R_k - \partial_i R_k = 0$.
\item \textbf{Leak or Transition with $X_j \not\in \mathcal{Z}_h$ or Catalytic Transition (Type II or Type III) with $X_j, X_\ell \not\in \mathcal{Z}_h$:} The only relevant nonzero entry is the $[A_h]_{ii} = \partial_i R_k$ entry (corresponding to $J_{ii} = - \partial_i R_k$ in Table \ref{tab:jacobian}) so that the contribution of the reaction to the $i$th column of $\mathbbm{1}^T A_h$ is $\partial_i R_k > 0$.
\end{enumerate}

We first prove that $\mathbbm{1}^T A_h$ is nonnegative. Since $\mathcal{G}_{-V}$ has no critical autocatalytic reactions by assumption, every autocatalytic instance in $\mathcal{Z}_h$ must have at least one catalytic instance that leaves the strongly connected component $\mathcal{Z}_h$. Consequently, the reaction may not be any of the cases listed in item 1. above. Since the remaining cases listed in items 2.-4. are nonnegative, we have that $\mathbbm{1}^T A_h$ is nonnegative.

We now prove that $\mathbbm{1}^T A_h$ is nonzero. It is clear from the cases above that $\mathcal{Z}_h$ must have at least one of the cases indicated in item $4.$ above. Suppose instead that $\mathcal{Z}_h$ only contains the cases indicated in item $2.$ and $3.$ (we may disregard the cases in item $1.$ because $\mathcal{G}_{-V}$ does not contain any critical autocatalytic reactions). In these cases, the only edges which leave $\mathcal{Z}_h$ are singleton catalytic instances of autocatalytic reactions. Consequently, we can disconnect $\mathcal{Z}_h$ from $0$ by removing the catalytic instances which leave $\mathcal{Z}_h$. Since this is not allowed because there are no critical strongly connected components, it must be the case that $\mathcal{Z}_h$ has at least one of the cases indicated in item $4.$ above. In such cases we have that the contribution to the $i$th column of $\mathbbm{1}^T A_h$ is $\partial_i R_k > 0$. Since all of the reaction possibilities in items $2.$, $3.$, and $4.$ above contribute either positive or zero terms, it follows that $\mathbbm{1}^T A_h$ is nonzero. 

The conditions of Theorem \ref{thm:ngm2} are satisfied and therefore $V^{-1}$ is nonnegative. Because $F$ is nonnegative and nonzero, and $V$ is a Z-matrix with a nonnegative inverse, the conditions of Theorem \ref{thm:ngm} are satisfied and $c^*$ is locally asymptotically stable with respect to $\mathsf{C}_{c_0}$ if $\rho(FV^{-1}) < 1$ and $c^*$ is unstable with respect to $\mathsf{C}_{c_0}$ if $\rho(FV^{-1}) > 1$. 
\end{proof}

\section{Proof of Corollary \ref{cor:2}}
\label{app:e}

\begin{proof}[Proof of Corollary \ref{cor:2}, part 1] Since $\mathcal{G}$ does not have any critical autocatalytic reactions or critical strongly connected components, we may construct the splitting $J = F - V$ with $F=0$ so that $V = -J$. Since $V$ is a $Z$-matrix with a nonnegative inverse, it follows that $-J$ is as well. It follows that $-J$ is an $M$-matrix by Theorem 1 of \cite{plemmons1977m}, and consequently the eigenvalues of $-J$ have strictly positive real part. Consequently, the eigenvalues of $J$ have strictly negative real part. The result follows by the Hartman-Grobman Theorem \cite{perko2001}.
\end{proof}
\begin{proof}[Proof of Corollary \ref{cor:2}, part 2]

Suppose that $\mathcal{R}_C \not= \emptyset$ or $\mathcal{Z}_C \not= \emptyset$.

First, consider the set of critical autocatalytic reactions $\mathcal{R}_C$. For every reaction $R_k \in \mathcal{R}_C$, we perform an autocatalytic splitting on the instance $R_k \in \phi(e_{ii})$ (i.e., remove  $R_k \in \phi(e_{ii})$ from $\mathcal{G}_{-V}$ and add  $R_k \in \phi(e_{ii})$ to $\mathcal{G}_{F}$). We note that all catalytic instances of critical autocatalytic reactions in $\mathcal{G}$ are not autocatalytic in $\mathcal{G}_{-V}$ so that $\mathcal{G}_{-V}$ has no critical autocatalytic reactions. Furthermore, removing autocatalytic instances from an ALT-graph does not alter the connectivity of the graph so that $\mathcal{G}$ and $\mathcal{G}_{-V}$ have the same strongly connected components and there is still a path from every vertex in $\mathcal{G}_{-V}$ to $0$. Consequently, the critical strongly connected components are not altered by removing these autocatalytic instances. 

Now consider the critical strongly connected components $\mathcal{Z}_C$. For every strongly connected component $\mathcal{Z}_h\in\mathcal{Z}_C$, choose an autocatalytic reaction $R_k\in\mathcal{R}_A$ that has a catalytic instance $R_k \in \phi(e_{ij})$ where $X_i \in \mathcal{Z}_h$ and $X_j \not\in \mathcal{Z}_h$. We perform an autocatalytic splitting on the autocatalytic instance. Note that transferring an autocatalytic instance to $\mathcal{G}_F$ does not change the connectivity structure of the underlying graph so that $\mathcal{G}$ and $\mathcal{G}_{-V}$ have the same connectivity structure. Consequently, there is still a directed path in $\mathcal{G}_{-V}$ from every vertex to $0$. Furthermore, each selected reaction $R_k$ is not autocatalytic in $\mathcal{G}_{-V}$, which implies that $R_k \in \phi(e_{ij})$ is not a catalytic instance in $\mathcal{G}_{-V}$. Consequently, the strongly connected component $\mathcal{Z}_h$ cannot be disconnected from $0$ through removal of catalytic instances and is therefore no longer critical.

This process results in the autocatalytic splitting $\mathcal{G} = \mathcal{G}_F \oplus \mathcal{G}_{-V}$ where
\[\mathcal{G}_{F} = (\mathcal{G}_{F_1} \oplus \cdots \oplus \mathcal{G}_{F_{|\mathcal{R}_C|}}) \oplus (\mathcal{G}_{F_{{|\mathcal{R}_C|}+1}} \oplus \cdots \oplus \mathcal{G}_{F_{|\mathcal{R}_C|+|\mathcal{Z}_C|}})\]
and $\mathcal{G}_{F_j}$, $j=1, \ldots, {|\mathcal{R}_C|}$, contains a single autocatalytic instance $R_k \in \phi(e_{ii})$ for every critical autocatalytic reaction $R_k \in \mathcal{R}_C$, and $\mathcal{G}_{F_j}$, $j={|\mathcal{R}_C|}+1, \ldots, {|\mathcal{R}_C|+|\mathcal{Z}_C|}$, contains a single autocatalytic instance $R_k \in \phi(e_{ii})$ from every critical strongly connected components $\mathcal{Z}_h \in \mathcal{Z}_C$. The graph $\mathcal{G}_{-V}$ contains whatever instances are leftover and, by construction, contains no critical autocatalytic reactions or critical strongly connected components and has a path from every vertex to $0$. By Theorem \ref{thm:1}, we have $F = L(\mathcal{G}_{F})$ is a nonnegative nonzero matrix and $V$, where $-V = L(\mathcal{G}_{-V})$, is a $Z$-matrix with nonnegative inverse. Furthermore, by Lemma \ref{lemma:multiset} we have that
\[F = L(\mathcal{G}_F) = L(\mathcal{G}_{F_1}) + \cdots + L(\mathcal{G}_{F_{|\mathcal{R}_C|+|\mathcal{Z}_C|}}) = F_1 + \cdots + F_{|\mathcal{R}_C|+|\mathcal{Z}_C|}\]
where $F_j$, $j=1, \ldots, {|\mathcal{R}_C|+|\mathcal{Z}_C|},$ consists of a single nonzero entry on the diagonal. In the maximal case, there is a single nonzero entry on the diagonal for each critical autocatalytic reaction and each critical strongly connected components, although we note that it could be less than this number if multiple autocatalytic splittings share a common self-edge $e_{ii}$. Consequently, we have $\mbox{rank}(F) \leq |\mathcal{R}_C| + | \mathcal{Z}_C |$. Since $V$ is invertible, it follows that $\mbox{rank}(FV^{-1}) \leq |\mathcal{R}_C| + | \mathcal{Z}_C |$, which completes the proof.
\end{proof}

\section{Critical Set Examples}
\label{app:f}

In this Appendix, we present some examples which illustrate in detail the subtle role criticality of both autocatalytic reactions and strongly connected components plays in the invertibility of $V$.

\begin{example}
Consider the following general ALT-graph:
\[\begin{tikzcd}[ampersand replacement=\&,execute at end picture={
        \draw[black, thin] 
            ([shift={(-5pt,-5pt)}]current bounding box.south west) 
            rectangle 
            ([shift={(5pt,5pt)}]current bounding box.north east);                  
            \node[above] at ([shift={(0,5pt)}]current bounding box.north) {$\mathcal{G}$};
    }
]
X_1 \arrow[loop left,"R_1"] \arrow[rr,yshift=0.5ex,"R_1"] \arrow[rdd,"R_*"'] \& \& X_2 \arrow[ll,yshift=-0.5ex,"R_2"] \arrow[ldd,"R_{**}"] \arrow[loop right,"R_2"] \\
\& \& \\
\& 0 \&
\end{tikzcd}
\]

We analyze four distinct mechanisms parameterized by the edge labels $R_*$ (from $X_1$ to $0$) and $R_{**}$ (from $X_2$ to $0$). Despite the common network structure, each case leads to different criticality structures and stability conclusions, illustrating how the ALT-graph methodology adapts to varying reaction networks. The cases also illuminate how the critical autocatalytic reactions and the critical strongly connected components make it more challenging to guarantee the conclusions of Theorem \ref{thm:ngm} hold by preventing the structural arguments for Theorem \ref{thm:ngm2} from holding. Specifically, critical autocatalytic reactions make it more challenging to guarantee $V$ is diagonally column dominant while critical strongly connected components prevent $V$ from being invertible.

Note that, in all cases, we have the autocatalytic reaction set $\mathcal{R}_A = \{ R_1, R_2 \}$ and the strongly connected components $\mathcal{Z}_1 = \{ X_1, X_2 \}$ and $\mathcal{Z}_2 = \{ 0 \}$.

\paragraph{\textbf{Case 1: }$R_*=R_1$ and $R_{**}=R_2$:}

Since the catalytic instances $R_1 \in \phi(e_{10})$ and $R_2 \in \phi(e_{20})$ leave $\mathcal{Z}_1$, we have that there are no critical autocatalytic reactions, i.e., $\mathcal{R}_C = \emptyset$. However, since the removal of these two instances disconnects $\mathcal{Z}_1$ from $0$, we have that $\mathcal{Z}_1$ is a critical strongly connected component, i.e., $\mathcal{Z}_C = \{ \mathcal{Z}_1 \}.$ By Algorithm \ref{alg:1}, we must select an instance of an autocatalytic reaction with its source in $\mathcal{Z}_1$, remove its autocatalytic instance from $\mathcal{G}_{-V}$, and add it to $\mathcal{G}_F$. Choosing $R_2 \in \phi(e_{22})$ gives the following ALT-graph and decomposition $\mathcal{G} = \mathcal{G}_F \oplus \mathcal{G}_{-V}$:
    

\[\footnotesize \begin{tikzcd}[ampersand replacement=\&,execute at end picture={
        \draw[black, thin] 
            ([shift={(-5pt,-5pt)}]current bounding box.south west) 
            rectangle 
            ([shift={(5pt,5pt)}]current bounding box.north east);                  
            \node[above] at ([shift={(0,5pt)}]current bounding box.north) {$\mathcal{G}$};
    }
]
X_1 \arrow[loop left,"R_1"] \arrow[rr,yshift=0.5ex,"R_1"] \arrow[rdd,"R_1"'] \& \& X_2 \arrow[ll,yshift=-0.5ex,"R_2"] \arrow[ldd,"R_2"] \arrow[loop right,"R_2"] \\
\& \& \\
\& 0 \&
\end{tikzcd}
\; = \;
            \begin{tikzcd}[ampersand replacement=\&,execute at end picture={
        \draw[black, thin] 
            ([shift={(-5pt,-5pt)}]current bounding box.south west) 
            rectangle 
            ([shift={(5pt,5pt)}]current bounding box.north east);
            \node[above] at ([shift={(0,5pt)}]current bounding box.north) {$\mathcal{G}_F$};
    }
]
           \&X_1\arrow[rr,white,yshift=0.1cm,"R_1"]    \& \& X_2 \arrow[loop right,"R_2"]\\
\& \& \\
\& \&0 \&
            \end{tikzcd}
\; \oplus \;
\begin{tikzcd}[ampersand replacement=\&,execute at end picture={
        \draw[black, thin] 
            ([shift={(-5pt,-5pt)}]current bounding box.south west) 
            rectangle 
            ([shift={(5pt,5pt)}]current bounding box.north east);
            \node[above] at ([shift={(0,5pt)}]current bounding box.north) {$\mathcal{G}_{-V}$};
    }]
            X_1 \arrow[loop left,"R_1"] \arrow[rr,yshift=0.5ex,"R_1"] \arrow[rdd,"R_1"'] \& \& X_2 \arrow[ll,yshift=-0.5ex,"R_2"] \arrow[ldd,"R_{2}"] \& \\
\& \& \\
\& 0 \&
            \end{tikzcd}
\]
The ALT-graph $\mathcal{G}_{-V}$ has no critical strongly connected component and no critical autocatalytic reactions. Consequently, we can apply Theorem \ref{thm:1} with $L(\mathcal{G}_{-V})=-V$ and $L(\mathcal{G}_F)=F$, which gives $\rho(FV^{-1}) = 1$. Consequently, the local stability of the boundary steady state is inconclusive.

To understand the importance of the critical strongly connected component $\mathcal{Z}_C = \{ \mathcal{Z}_1\}$ in this example, consider the corresponding ALT-matrices:
\[
L(\mathcal{G})=\begin{bmatrix}
    -\partial_1R_1 & \partial_2R_2 \\ \partial_1R_1 & -\partial_2R_2
\end{bmatrix}, \hspace{0.5in}
L(\mathcal{G_{F}})=\begin{bmatrix}
    0 & 0 \\ 0 & \partial_2R_2
\end{bmatrix}, \hspace{0.5in}
L(\mathcal{G_{-V}})=\begin{bmatrix}
    -\partial_{1}R_1 &\partial_2R_2 \\\partial_1R_1 & -2\partial_2R_2
\end{bmatrix}
\]
We note that the ALT-matrix of the network $\mathcal{G}$---which corresponds to the Jacobian---is not invertible and consequently Theorem \ref{thm:1} cannot be applied directly. This singularity is a direct consequence of the existence of the critical strongly connected component $\mathcal{Z}_1$ in $\mathcal{G}$. Specifically, the catalytic instances with $0$ as its target ($R_1 \in \phi(e_{10})$ and $R_2 \in \phi(e_{20})$) do not contribute to the $\mathcal{Z}_1$ block of $L(\mathcal{G})$ because their negative contribution is balanced exactly by the corresponding autocatalytic instances ($R_1 \in \phi(e_{11})$ and $R_2 \in \phi(e_{22})$). As a result, even though these instances appear as a leak in the graph $\mathcal{G}$, they do not contribute a leak term to the ALT-matrix. Removing either one of the autocatalytic instances $R_1 \in \phi(e_{11})$ or $R_2 \in \phi(e_{22})$ solves this problem by allowing the corresponding catalytic instance to behave as a leak in both the ALT-graph $\mathcal{G}_{-V}$ and ALT-matrix $L(\mathcal{G}_{-V})$. Note that the rules $\rho(FV^{-1})$ is consistent with the original matrix $J$ having an eigenvalue of zero, corresponding to inconclusive local stability. 

\paragraph{\textbf{Case 2: }$R_* = R_1$ and $R_{**} = R_3$:}

Since the catalytic instance $R_1 \in \phi(e_{10})$ leaves $\mathcal{Z}_1$, $R_1$ is not a critical autocatalytic reaction. However, since the sole catalytic instance $R_2 \in \phi(e_{21})$ of $R_2 \in \mathcal{R}_A$ traverses $\mathcal{Z}_1$, we have that $R_2$ is a critical autocatalytic reaction. Consequently, we have $\mathcal{R}_C = \{ R_2 \}$. Since the instance $R_3 \in \phi(e_{20})$ does not correspond to an autocatalytic reaction and therefore may not be removed, it is impossible to disconnect $\mathcal{Z}_1$ from $0$. Consequently, there are no critical strongly connected components so that $\mathcal{Z}_C = \emptyset$. by Algorithm \ref{alg:1}, we select the critical autocatalytic reaction $R_2 \in \mathcal{R}_C$, remove the catalytic instance $R_2 \in \phi(e_{21})$ and autocatalytic instance $R_2 \in \phi(e_{22})$ from $\mathcal{G}_{-V}$, and add them to $\mathcal{G}_F$. The resulting ALT-graph and decomposition $\mathcal{G} = \mathcal{G}_F \oplus \mathcal{G}_{-V}$ are:


\[\footnotesize \begin{tikzcd}[ampersand replacement=\&,execute at end picture={
        \draw[black, thin] 
            ([shift={(-5pt,-5pt)}]current bounding box.south west) 
            rectangle 
            ([shift={(5pt,5pt)}]current bounding box.north east);                  
            \node[above] at ([shift={(0,5pt)}]current bounding box.north) {$\mathcal{G}$};
    }
]
X_1 \arrow[loop left,"R_1"] \arrow[rr,yshift=0.5ex,"R_1"] \arrow[rdd,"R_1"'] \& \& X_2 \arrow[ll,yshift=-0.5ex,"R_2"] \arrow[ldd,"R_3"] \arrow[loop right,"R_2"] \\
\& \& \\
\& 0 \&
\end{tikzcd}
\; = \;
            \begin{tikzcd}[ampersand replacement=\&,execute at end picture={
        \draw[black, thin] 
            ([shift={(-5pt,-5pt)}]current bounding box.south west) 
            rectangle 
            ([shift={(5pt,5pt)}]current bounding box.north east);
            \node[above] at ([shift={(0,5pt)}]current bounding box.north) {$\mathcal{G}_F$};
    }
]
           \&X_1   \arrow[rr,white,yshift=0.5ex,"R_1"]  \& \& X_2 \arrow[ll,"R_2"] \arrow[loop right,"R_2"]\\
\& \& \\
\& \&0 \&
            \end{tikzcd}
\; \oplus \;
 \begin{tikzcd}[ampersand replacement=\&,execute at end picture={
        \draw[black, thin] 
            ([shift={(-5pt,-5pt)}]current bounding box.south west) 
            rectangle 
            ([shift={(5pt,5pt)}]current bounding box.north east);
            \node[above] at ([shift={(0,5pt)}]current bounding box.north) {$\mathcal{G}_{-V}$};
    }]
            X_1 \arrow[loop left,"R_1"] \arrow[rr,"R_1"] \arrow[rdd,"R_1"'] \& \& X_2 \arrow[ll,white,yshift=-0.5ex,"R_2"] \arrow[loop right,white,"R_2"] \arrow[ldd,"R_{3}"] \& \\
\& \& \\
\& 0 \&
            \end{tikzcd}
\]
The resulting $\mathcal{G}_{-V}$ graph has no critical autocatalytic reactions or critical strongly connected components so Algorithm \ref{alg:1} terminates. By Theorem \ref{thm:1}, the ALT-matrices $L(\mathcal{G})=J$, $L(\mathcal{G}_{-V})=-V$ and $L(\mathcal{G}_F)=F$, therefore, we can compute the basic reproduction number, which is $\rho(FV^{-1})=\frac{\partial_2R_2}{\partial_2R_3}$ so that the boundary steady state will be locally asymptotically stable if $\partial_2R_3>\partial_2R_2$.

To consider the importance of the critical autocatalytic reaction set $\mathcal{R}_C = \{ R_2 \}$, consider the corresponding ALT-matrices:
\[ L(\mathcal{G}) = \begin{bmatrix}
-\partial_{1}R_1 & \partial_2R_2 \\ \partial_1R_1 & -\partial_2R_3
\end{bmatrix}, \hspace{0.5in}
L(\mathcal{G_{F}})=\begin{bmatrix}
    0 & \partial_2R_2 \\ 0 & 0
\end{bmatrix}, \hspace{0.5in}
L(\mathcal{G_{-V}})=\begin{bmatrix}
    -\partial_{1}R_1 & \partial_2R_2 \\ 0 & - \partial_2R_3
\end{bmatrix} 
\]
The ALT-matrix $L(\mathcal{G})$ is invertible unless $\partial_2R_3=\partial_2R_2$; however, if $\partial_2R_3<\partial_2R_2$, then it will not be diagonally column dominant and Theorem \ref{thm:1} cannot be applied. This lack of diagonal column dominance is a result of the critical autocatalytic reaction $R_2 \in \mathcal{R}_C$. Specifically, the combined catalytic instance $R_2 \in \phi(e_{21})$ and autocatalytic instance $R_2 \in \phi(e_{22})$ produce an off-diagonal positive entry in the $(1,2)$ entry of $L(\mathcal{G})$ but the entries along the diagonal cancel. Diagonal column dominance can therefore not be guaranteed. Moving both the catalytic and autocatalytic instances ($R_2 \in \phi(e_{21})$ and $R_2 \in \phi(e_{22})$) removes this problem by eliminating the possible violation to diagonal column dominance in $L(\mathcal{G}_{-V})$. We note that, if $\partial_2R_3>\partial_2R_2$, then the autocatalytic reaction $R_2$ is not a problem for invertibility or diagonal column dominance and Theorem \ref{thm:ngm} can be applied directly. 
The possibility of using conditional diagonal column dominance to save some computational time will be studied in future work.

\paragraph{\textbf{Case 3: }$R_*=R_3$ and $R_{**}=R_4$:}
    
All of the catalytic instances of autocatalytic reactions ($R_1 \in \phi(e_{12})$ and $R_2 \in \phi(e_{21})$) are contained in $\mathcal{Z}_1$, so that they are both critical, i.e., $\mathcal{R}_C = \{ R_1, R_2\}.$ Since the instances $R_3 \in \phi(e_{10})$ and $R_4 \in \phi(e_{20})$ connect $\mathcal{Z}_1$ to $0$ and the underlying reactions are not autocatalytic, there are no critical strongly connected components, i.e., $\mathcal{Z}_C = \emptyset$. By Algorithm \ref{alg:1}, we select the critical autocatalytic reaction $R_2 \in \mathcal{R}_C$ and remove the catalytic instance $R_2 \in \phi(e_{21})$ and the autocatalytic instance $R_2 \in \phi(e_{22})$ from $\mathcal{G}_{-V}$ and add them to $\mathcal{G}_F$. The resulting ALT-graph and decomposition $\mathcal{G} = \mathcal{G}_F \oplus \mathcal{G}_{-V}$ are:

\[\footnotesize \begin{tikzcd}[ampersand replacement=\&,execute at end picture={
        \draw[black, thin] 
            ([shift={(-5pt,-5pt)}]current bounding box.south west) 
            rectangle 
            ([shift={(5pt,5pt)}]current bounding box.north east);                  
            \node[above] at ([shift={(0,5pt)}]current bounding box.north) {$\mathcal{G}$};
    }
]
X_1 \arrow[loop left,"R_1"] \arrow[rr,yshift=0.5ex,"R_1"] \arrow[rdd,"R_3"'] \& \& X_2 \arrow[ll,yshift=-0.5ex,"R_2"] \arrow[ldd,"R_4"] \arrow[loop right,"R_2"] \\
\& \& \\
\& 0 \&
\end{tikzcd}
\; = \;
\begin{tikzcd}[ampersand replacement=\&,execute at end picture={
        \draw[black, thin] 
            ([shift={(-5pt,-5pt)}]current bounding box.south west) 
            rectangle 
            ([shift={(5pt,5pt)}]current bounding box.north east);
            \node[above] at ([shift={(0,5pt)}]current bounding box.north) {$\mathcal{G}_F$};
    }
]
           \&X_1\arrow[rr,white,yshift=0.1cm,"R_1"]    \& \& X_2 \arrow[loop right,"R_2"] \arrow[ll,"R_2"]\\
\& \& \\
\& \&0 \&
            \end{tikzcd}
\; \oplus \;
\begin{tikzcd}[ampersand replacement=\&,execute at end picture={
        \draw[black, thin] 
            ([shift={(-5pt,-5pt)}]current bounding box.south west) 
            rectangle 
            ([shift={(5pt,5pt)}]current bounding box.north east);
            \node[above] at ([shift={(0,5pt)}]current bounding box.north) {$\mathcal{G}_{-V}$};
    }]
            \& X_1  \arrow[loop left,"R_1"]
    \arrow[rr,"R_1"] 
    \arrow[rdd,"R_3"'] 
\& \& 
X_2 \arrow[ldd,"R_{4}"] 
\& \\
\& \& \& \& \\
\& \& 0 \& \&
            \end{tikzcd}
\]
The resulting $\mathcal{G}_{-V}$ graph has no critical strongly connected components or critical autocatalytic reactions so Algorithm \ref{alg:1} terminates. Applying Theorem \ref{thm:1} with $L(\mathcal{G}_{-V})=-V$ and $L(\mathcal{G}_F)=F$ gives the reproduction number $\rho(FV^{-1})=\frac{\partial_1R_1\partial_2R_2}{\partial_1R_3\partial_2R_4}$. Therefore, the boundary steady state will be stable if $\partial_{1}R_{3}\partial_2R_4>\partial_1R_1\partial_2R_2$ and unstable if $\partial_{1}R_{3}\partial_2R_4<\partial_1R_1\partial_2R_2$.

To consider the importance of the critical autocatalytic reaction set $\mathcal{R}_C = \{ R_1, R_2 \}$, consider the corresponding ALT-matrices:
\[L(\mathcal{G})=\begin{bmatrix}
    -\partial_1R_3 & \partial_2R_2 \\ \partial_1R_1 & -\partial_2R_4
\end{bmatrix}, \hspace{0.5in}
L(\mathcal{G_{F}})=\begin{bmatrix}
    0 & \partial_2R_2 \\ 0 & 0
\end{bmatrix}, \hspace{0.5in}
L(\mathcal{G_{-V}})=\begin{bmatrix}
    -\partial_1R_3 & 0 \\\partial_1R_1 & -\partial_2R_4
\end{bmatrix}
\]
The ALT-matrix $L(\mathcal{G})$ is invertible unless $\partial_1R_3 \partial_2 R_4 = \partial_1R_1 \partial_2R_2$; however, diagonal column dominance is not guaranteed for either column. This is a result of the critical autocatalytic reactions $R_1 \in \mathcal{R}_C$ and $R_2 \in \mathcal{R}_C$. Specifically, the combination of catalytic and autocatalytic instances leaves one positive entry on the off-diagonal of $L(\mathcal{G})$ while the entries cancel on the diagonal of $L(\mathcal{G})$. Diagonal column dominance is consequently not guaranteed. We can see, however, that it is unnecessary to split both $R_1 \in \mathcal{R}_C$ and $R_2 \in \mathcal{R}_C$ from $\mathcal{G}$. After removing the instances $R_2 \in \phi(e_{21})$ and $R_2 \in \phi(e_{22})$, we have a new strongly connected component structure in $\mathcal{G}_{-V}$, namely $\mathcal{Z}_1 = \{ X_1 \}$, $\mathcal{Z}_2 = \{ X_2 \}$, and $\mathcal{Z}_3 = \{ 0 \}$. Although $R_1$ is still autocatalytic (i.e., $\mathcal{R}_A = \{ R_1 \}$), the catalytic instance $R_1 \in \phi(e_{12})$ traverses $\mathcal{Z}_1$ and $\mathcal{Z}_2$ so that $\mathcal{R}_C = \emptyset$. Furthermore, the instance $R_3 \in \phi(e_{10})$ of the nonautocatalytic reaction $R_3 \not\in \mathcal{R}_A$ prevents disconnection of $\mathcal{Z}_1$ and $0$ so that $\mathcal{Z}_C = \emptyset$. It follows that Theorem \ref{thm:1} may be applied. Importantly, we do not need to split all critical autocatalytic reactions as a careful choice of splitting may alter the network structure in a way which removes criticality. This case emphasizes the importance of performing Algorithm \ref{alg:1} iteratively, one splitting at a time.

\paragraph{\textbf{Case 4: }$R_*=\{R_1,R_3\}$ and $R_{**}=R_2$:}

Since there are catalytic instances $R_1 \in \phi(e_{10})$ and $R_2 \in \phi(e_{20})$ of each autocatalytic reaction which leave $\mathcal{Z}_1$, we have that there are no critical autocatalytic reactions, i.e., $\mathcal{R}_C = \emptyset$. Furthermore, since the instance $R_3 \in \phi(e_{10})$ connects $\mathcal{Z}_1$ to $0$ but $R_3$ is not an autocatalytic reaction, there is no way to disconnect $\mathcal{Z}_1$ from $0$ by removal of instances of autocatalytic reactions. Consequently, we have that there are no critical strongly connected components, i.e., $\mathcal{Z}_C = \emptyset$. 
It follows that the boundary steady state is universally locally asymptotically stable by Corollary \ref{cor:2}. The ALT-graph and corresponding ALT-matrix are given by the following:
\[\footnotesize \begin{tikzcd}[ampersand replacement=\&,execute at end picture={
        \draw[black, thin] 
            ([shift={(-5pt,-5pt)}]current bounding box.south west) 
            rectangle 
            ([shift={(5pt,5pt)}]current bounding box.north east);                  
            \node[above] at ([shift={(0,5pt)}]current bounding box.north) {$\mathcal{G}$};
    }
]
X_1 \arrow[loop left,"R_1"] \arrow[rr,yshift=0.5ex,"R_1"] \arrow[rdd,"{\{R_{1}, R_3\}}"'] \& \& X_2 \arrow[ll,yshift=-0.5ex,"R_2"] \arrow[ldd,"R_2"] \arrow[loop right,"R_2"] \\
\& \& \\
\& 0 \&
\end{tikzcd} \; \; \; \mbox{ and } \; \; \; L(\mathcal{G})=\begin{bmatrix}
    -\partial_1R_1-\partial_1R_3 & \partial_2R_2 \\ \partial_1R_1 & -\partial_2R_2
\end{bmatrix}.
\]
We can see that, despite the autocatalytic reactions in $\mathcal{G}$, $J = L(\mathcal{G})$ is already invertible and diagonally column dominant so that no splittings are required before applying Theorem \ref{thm:ngm}.

\end{example}

\section{Splitting Sequence Examples}
\label{app:g}
\begin{example}
The following example shows the fact that Algorithm \ref{alg:1} allows different splitting options in the same step, meaning that the same ALT-graph $\mathcal{G}$ can be decomposed differently depending on what type of splitting we perform, or what specific autocatalytic reaction we choose first. One specific decomposition of $\mathcal{G}$ may yield a lower rank for $F$ than another one, which implies less computational complexity to calculate the basic reproduction number. Usually, choosing to perform a catalytic splitting rather than an autocatalytic one minimizes the number of steps required in the decomposition, as it can break the strongly connected components apart. See the following ALT-graph:
\[\begin{tikzcd}[ampersand replacement=\&,execute at end picture={
        \draw[black, thin] 
            ([shift={(-5pt,-5pt)}]current bounding box.south west) 
            rectangle 
            ([shift={(5pt,5pt)}]current bounding box.north east);                  
            \node[above] at ([shift={(0,5pt)}]current bounding box.north) {$\mathcal{G}$};
    }
]
X_1
    \arrow[loop left,"R_1"]
    \arrow[rr,yshift=0.5ex,"R_1"]
    \arrow[rdd,xshift=0.4ex,yshift=0.4ex,"R_1"]
\&
\&
X_2
    \arrow[ll,yshift=-0.5ex,"R_2"]
    \arrow[loop right,"R_2"]
    \arrow[ldd,"R_2"]
\\
\&
\& 
\\
\&
X_3
    \arrow[luu,xshift=-0.4ex,yshift=-0.4ex,"R_3"]
    \arrow[r,"R_4"']
\&
0
\end{tikzcd}
\]
There are the strongly connected components $\mathcal{Z}_1 = \{ X_1, X_2, X_3\}$ and $\mathcal{Z}_2 = \{ 0 \}$, neither of which are critical so that $\mathcal{Z}_C = \emptyset$. There are also two autocatalytic reactions $R_1$ and $R_2$, both of which are critical so that $\mathcal{R}_C = \{ R_1, R_2 \}$. According to Algorithm \ref{alg:1}, we remove a catalytic instance of a critical autocatalytic reaction from $\mathcal{G}$ and place it in $\mathcal{G}_F$. We now explore different scenarios based on what specific splitting we choose to perform in the first step of the decomposition.
\paragraph{\textbf{Autocatalytic Splitting:}}
The autocatalytic instance $R_1 \in \phi(e_{11})$ is removed from $\mathcal{G_{-V}}$ and added to $\mathcal{G}_F$. The resulting ALT-graphs are:
\[\scriptsize \begin{tikzcd}[ampersand replacement=\&,execute at end picture={
        \draw[black, thin] 
            ([shift={(-5pt,-5pt)}]current bounding box.south west) 
            rectangle 
            ([shift={(5pt,5pt)}]current bounding box.north east);                  
            \node[above] at ([shift={(0,5pt)}]current bounding box.north) {$\mathcal{G}$};
    }
]
X_1
    \arrow[loop left,"R_1"]
    \arrow[rr,yshift=0.5ex,"R_1"]
    \arrow[rdd,xshift=0.4ex,yshift=0.4ex,"R_1"]
\&
\&
X_2
    \arrow[ll,yshift=-0.5ex,"R_2"]
    \arrow[loop right,"R_2"]
    \arrow[ldd,"R_2"]
\\
\&
\& 
\\
\&
X_3
    \arrow[luu,xshift=-0.4ex,yshift=-0.4ex,"R_3"]
    \arrow[r,"R_4"']
\&
0
\end{tikzcd}
\; = \;
            \begin{tikzcd}[ampersand replacement=\&,execute at end picture={
        \draw[black, thin] 
            ([shift={(-5pt,-5pt)}]current bounding box.south west) 
            rectangle 
            ([shift={(5pt,5pt)}]current bounding box.north east);
            \node[above] at ([shift={(0,5pt)}]current bounding box.north) {$\mathcal{G}_F$};
    }
]
           X_1
    \arrow[loop left,"R_1"]
    \arrow[rr,yshift=0.1cm,white,"R_1"]
    \arrow[rdd,xshift=0cm,yshift=0cm,white,"R_1"]
\&
\&
X_2
    \arrow[ll,yshift=-0.1cm,white,"R_2"]
    \arrow[loop right,white,"R_2"]
    \arrow[ldd,white,"R_2"]
\\
\&
\& 
\\
\&
X_3
    \arrow[luu,xshift=-0.1cm,yshift=-0.1cm,white,"R_3"]
    \arrow[r,white,"R_4"']
\&
0
            \end{tikzcd}
\; \oplus \;
 \begin{tikzcd}[ampersand replacement=\&,execute at end picture={
        \draw[black, thin] 
            ([shift={(-5pt,-5pt)}]current bounding box.south west) 
            rectangle 
            ([shift={(5pt,5pt)}]current bounding box.north east);
            \node[above] at ([shift={(0,5pt)}]current bounding box.north) {$\mathcal{G}_{-V}$};
    }]
            X_1
    \arrow[loop left,white,"R_1"]
    \arrow[rr,yshift=0.5ex,"R_1"]
    \arrow[rdd,xshift=0.4ex,yshift=0.4ex,"R_1"]
\&
\&
X_2
    \arrow[ll,yshift=-0.5ex,"R_2"]
    \arrow[loop right,"R_2"]
    \arrow[ldd,"R_2"]
\\
\&
\& 
\\
\&
X_3
    \arrow[luu,xshift=-0.4ex,yshift=-0.4ex,"R_3"]
    \arrow[r,"R_4"']
\&
0
            \end{tikzcd}
\]
After this first autocatalytic splitting, $\mathcal{G}_{-V}$ still has the critical autocatalytic reaction $R_2$, and the single strongly connected component is not critical. For the next step of the decomposition, there are 3 different options: perform an autocatalytic splitting with $R_2 \in \phi_{-V}(e_{22})$, perform a catalytic splitting with $R_2\in\phi_{-V}(e_{22})\cap\phi_{-V}(e_{23})$, or perform a catalytic splitting with $R_2\in\phi_{-V}(e_{22})\cap\phi_{-V}(e_{21})$. The first two will produce a rank-2 $F$ matrix, while the third one will produce a rank-1 $F$ matrix (because both splittings contribute to the same row, as shown in the common target vertex heuristic of Algorithm \ref{alg:1}). After this second splitting, $\mathcal{G}_{-V}$ will have no critical autocatalytic reactions or critical strongly connected components. Therefore, for this specific scenario, at least two splittings need to be made in order to satisfy the requirements of Theorem \ref{thm:1}.
\paragraph{\textbf{Catalytic Splitting 1:}}
The autocatalytic instance $R_1$ and the catalytic instance $R_1\in\phi_{-V}(e_{13})$ are moved from $\mathcal{G_{-V}}$ to $\mathcal{G}_F$. The resulting ALT-graphs are:
\[\scriptsize \begin{tikzcd}[ampersand replacement=\&,execute at end picture={
        \draw[black, thin] 
            ([shift={(-5pt,-5pt)}]current bounding box.south west) 
            rectangle 
            ([shift={(5pt,5pt)}]current bounding box.north east);                  
            \node[above] at ([shift={(0,5pt)}]current bounding box.north) {$\mathcal{G}$};
    }
]
X_1
    \arrow[loop left,"R_1"]
    \arrow[rr,yshift=0.5ex,"R_1"]
    \arrow[rdd,xshift=0.4ex,yshift=0.4ex,"R_1"]
\&
\&
X_2
    \arrow[ll,yshift=-0.5ex,"R_2"]
    \arrow[loop right,"R_2"]
    \arrow[ldd,"R_2"]
\\
\&
\& 
\\
\&
X_3
    \arrow[luu,xshift=-0.4ex,yshift=-0.4ex,"R_3"]
    \arrow[r,"R_4"']
\&
0
\end{tikzcd}
\; = \;
            \begin{tikzcd}[ampersand replacement=\&,execute at end picture={
        \draw[black, thin] 
            ([shift={(-5pt,-5pt)}]current bounding box.south west) 
            rectangle 
            ([shift={(5pt,5pt)}]current bounding box.north east);
            \node[above] at ([shift={(0,5pt)}]current bounding box.north) {$\mathcal{G}_F$};
    }
]
           X_1
    \arrow[loop left,"R_1"]
    \arrow[rr,yshift=0.5ex,white,"R_1"]
    \arrow[rdd,"R_1"]
\&
\&
X_2
    \arrow[ll,yshift=-0.5ex,white,"R_2"]
    \arrow[loop right,white,"R_2"]
    \arrow[ldd,white,"R_2"]
\\
\&
\& 
\\
\&
X_3
    \arrow[luu,xshift=-0.4ex,yshift=-0.4ex,white,"R_3"]
    \arrow[r,white,"R_4"']
\&
0
            \end{tikzcd}
\; \oplus \;
 \begin{tikzcd}[ampersand replacement=\&,execute at end picture={
        \draw[black, thin] 
            ([shift={(-5pt,-5pt)}]current bounding box.south west) 
            rectangle 
            ([shift={(5pt,5pt)}]current bounding box.north east);
            \node[above] at ([shift={(0,5pt)}]current bounding box.north) {$\mathcal{G}_{-V}$};
    }]
            X_1
    \arrow[loop left,white,"R_1"]
    \arrow[rr,yshift=0.5ex,"R_1"]
    \arrow[rdd,xshift=0.4ex,yshift=0.4ex,white,"R_1"]
\&
\&
X_2
    \arrow[ll,yshift=-0.5ex,"R_2"]
    \arrow[loop right,"R_2"]
    \arrow[ldd,"R_2"]
\\
\&
\& 
\\
\&
X_3
    \arrow[luu,"R_3"]
    \arrow[r,"R_4"']
\&
0
            \end{tikzcd}
\]
After this first catalytic splitting, $\mathcal{G}_{-V}$ still has the critical autocatalytic reaction $R_2$, and the single strongly connected component is not critical. For the next step of the decomposition, there are 3 different options: perform an autocatalytic splitting with $R_2 \in \phi_{-V}(e_{22})$, perform a catalytic splitting with $R_2\in\phi_{-V}(e_{22})\cap\phi_{-V}(e_{23})$ or perform a catalytic splitting with $R_2\in\phi_{-V}(e_{22})\cap\phi_{-V}(e_{21})$. The first one will produce a rank-2 $F$ matrix, the second one will disconnect the new strongly connected component $\mathcal{Z}_1=\{X_1,X_2\}$ from zero, meaning that this splitting is not valid, and the third one will also produce a rank-2 $F$ matrix. Only the first and third options will give a decomposition that guarantees connectivity and satisfies the requirements of Theorem \ref{thm:1}. We can see how the catalytic preference heuristic of Algorithm \ref{alg:1} is playing an important role in this scenario, as removing both autocatalytic and catalytic instances can be beneficial for the splitting, unless it disconnects the source from zero (as happens in the second option). Therefore, choosing which catalytic edge to remove is very important.
\paragraph{\textbf{Catalytic Splitting 2:}}
The autocatalytic instance $R_1$ and the catalytic instance $R_1\in\phi_{-V}(e_{12})$ are moved from $\mathcal{G_{-V}}$ to $\mathcal{G}_F$. The resulting ALT-graphs are:
\[\scriptsize \begin{tikzcd}[ampersand replacement=\&,execute at end picture={
        \draw[black, thin] 
            ([shift={(-5pt,-5pt)}]current bounding box.south west) 
            rectangle 
            ([shift={(5pt,5pt)}]current bounding box.north east);                  
            \node[above] at ([shift={(0,5pt)}]current bounding box.north) {$\mathcal{G}$};
    }
]
X_1
    \arrow[loop left,"R_1"]
    \arrow[rr,yshift=0.5ex,"R_1"]
    \arrow[rdd,xshift=0.4ex,yshift=0.4ex,"R_1"]
\&
\&
X_2
    \arrow[ll,yshift=-0.5ex,"R_2"]
    \arrow[loop right,"R_2"]
    \arrow[ldd,"R_2"]
\\
\&
\& 
\\
\&
X_3
    \arrow[luu,xshift=-0.4ex,yshift=-0.4ex,"R_3"]
    \arrow[r,"R_4"']
\&
0
\end{tikzcd}
\; = \;
            \begin{tikzcd}[ampersand replacement=\&,execute at end picture={
        \draw[black, thin] 
            ([shift={(-5pt,-5pt)}]current bounding box.south west) 
            rectangle 
            ([shift={(5pt,5pt)}]current bounding box.north east);
            \node[above] at ([shift={(0,5pt)}]current bounding box.north) {$\mathcal{G}_F$};
    }
]
           X_1
    \arrow[loop left,"R_1"]
    \arrow[rr,"R_1"]
    \arrow[rdd,xshift=0.4ex,yshift=0.4ex,white,"R_1"]
\&
\&
X_2
    \arrow[ll,yshift=-0.5ex,white,"R_2"]
    \arrow[loop right,white,"R_2"]
    \arrow[ldd,white,"R_2"]
\\
\&
\& 
\\
\&
X_3
    \arrow[luu,xshift=-0.5ex,yshift=-0.5ex,white,"R_3"]
    \arrow[r,white,"R_4"']
\&
0
            \end{tikzcd}
\; \oplus \;
 \begin{tikzcd}[ampersand replacement=\&,execute at end picture={
        \draw[black, thin] 
            ([shift={(-5pt,-5pt)}]current bounding box.south west) 
            rectangle 
            ([shift={(5pt,5pt)}]current bounding box.north east);
            \node[above] at ([shift={(0,5pt)}]current bounding box.north) {$\mathcal{G}_{-V}$};
    }]
            X_1
    \arrow[loop left,white,"R_1"]
    \arrow[rr,yshift=0.5ex,white,"R_1"]
    \arrow[rdd,xshift=0.4ex,yshift=0.4ex,"R_1"]
\&
\&
X_2
    \arrow[ll,yshift=0cm,"R_2"]
    \arrow[loop right,"R_2"]
    \arrow[ldd,"R_2"]
\\
\&
\& 
\\
\&
X_3
    \arrow[luu,xshift=-0.5ex,yshift=-0.5ex,"R_3"]
    \arrow[r,"R_4"']
\&
0
            \end{tikzcd}
\]
After this first catalytic splitting, $\mathcal{G}_{-V}$ has broken its single strongly connected component into two, which are  $\mathcal{Z}_1=\{X_1,X_3\}$ and $\mathcal{Z}_2=\{X_2\}$. As we can see, none of them are critical, and the autocatalytic reaction $R_2$ is not critical anymore. Therefore, with only one catalytic splitting, we get a rank-1 $F$ matrix and a valid decomposition that satisfies the conditions of Theorem \ref{thm:1}. This is the decomposition that minimizes the rank of $F$ with the least amount of steps.

If instead of choosing $R_1$ for the first step, we choose $R_2$, there is no possible decomposition that could give us a rank-1 $F$ matrix in less than two steps. So choosing what autocatalytic reaction to remove first can play an important role in the decomposition, minimizing the steps of the algorithm and therefore some computational time.
\end{example}


\section{Details of Examples in Section \ref{sec:examples}}
\label{app:h}

In this Appendix, we present computational details of Examples \ref{ex:envzompr}, \ref{ex:twosite}, \ref{ex:sir}, and \ref{ex:vectorborne} from the main text. To clearly illustrate the ALT-graph approach, we adopt throughout the siphon-based reindexing convention introduced in Section \ref{sec:siphon}.

\begin{example}
\label{ex:envzompr2}
We now present full computational details for the EnvZ-OmpR osmoregularity mechanism \eqref{envzompr} studied in Example \ref{ex:envzompr}. Recall that the network was introduced in \cite{shinar2010structural} and has been studied in the context of boundary stability in the papers \cite{avram2024advancing,johnston2026boundary}. 
The network \eqref{envzompr} is governed by the following mass action system:
\begin{equation}
\label{envzompr-des}
\left\{ \; \; \begin{aligned}
X_d' &= -R_1(X_d) + R_2(X) - R_9(X_d,Y_p) + R_{10}(X_dY_p) + R_{11}(X_dY_p) \\
X' &= R_1(X_d) - R_2(X) - R_3(X) + R_4(X_t) + R_8(X_pY) \\
X_t' &= R_3(X) - R_4(X_t) - R_5(X_t) - R_{12}(X_t,Y_p) + R_{13}(X_tY_p) + R_{14}(X_tY_p) \\
X_p' &= R_5(X_t) - R_6(X_p,Y) + R_7(X_pY) \\
Y' &= -R_6(X_p,Y) + R_7(X_pY) + R_{11}(X_dY_p) + R_{14}(X_tY_p) \\
X_pY' &= R_6(X_p,Y) - R_7(X_pY) - R_8(X_pY) \\
Y_p' &= R_8(X_pY) - R_9(X_d,Y_p) + R_{10}(X_dY_p) - R_{12}(X_t,Y_p) + R_{13}(X_tY_p) \\
X_dY_p' &= R_9(X_d,Y_p) - R_{10}(X_dY_p) - R_{11}(X_dY_p) \\
X_tY_p' &= R_{12}(X_t,Y_p) - R_{13}(X_tY_p) - R_{14}(X_tY_p).
\end{aligned} \right.
\end{equation} 
The system \eqref{envzompr-des} has the conservation laws
\[\begin{aligned}
        &X_{tot} = X_d+X+X_t+X_p+X_pY+X_dY_p+X_tY_p, \\
        &Y_{tot} = Y+X_pY+Y_p+X_dY_p+X_tY_p. 
\end{aligned}\]
and the boundary steady state $(X_d,X,X_t,X_p,Y,X_pY,Y_p,X_dY_p,X_tY_p)=(0,0,0,X_{tot},0,0,Y_{tot},0,0)$ corresponding to the siphon $\mathcal{X} = \{ X_d, X, X_t, Y, X_pY, X_dY_p, X_tY_p \}$.

We reindex the species as $X_1 = X_d, X_2 = X, X_3 = X_t, X_4 = Y, X_5 = X_pY, X_6 = X_dY_p, X_7 = X_tY_p, Y_1 = X_p,$ and $Y_2 = Y_p$. The network \eqref{envzompr} then becomes:
\begin{equation}
\label{envzompr-reindex}
\left\{ \; \; 
\begin{tikzcd}
X_1 \arrow[r,yshift=+0.5ex,"R_1"] & X_2 \arrow[l,yshift=-0.5ex,"R_2"] \arrow[r,yshift=+0.5ex,"R_3"]& X_3 \arrow[l,yshift=-0.5ex,"R_4"] \arrow[r,"R_5"]& Y_1\\[-0.1in]
X_4 + Y_1 \arrow[r,yshift=+0.5ex,"R_6"] & X_5 \arrow[l,yshift=-0.5ex,"R_7"] \arrow[r,yshift=+0.5ex,"R_8"]& X_2 + Y_2 & \\[-0.1in]
X_1 + Y_2 \arrow[r,yshift=+0.5ex,"R_9"] & X_6 \arrow[l,yshift=-0.5ex,"R_{10}"] \arrow[r,yshift=+0.5ex,"R_{11}"]& X_1 + X_4 &  \\[-0.1in]
X_3 + Y_2 \arrow[r,yshift=+0.5ex,"R_{12}"] & X_7 \arrow[l,yshift=-0.5ex,"R_{13}"] \arrow[r,yshift=+0.5ex,"R_{14}"]& X_3 + X_4 &
      \end{tikzcd} \right.
\end{equation}
The corresponding system \eqref{eq:crs2} restricted to $\mathcal{X}$ is given by
\[
\left\{ \; \; \begin{aligned}
X_1' &= -R_1(X_1) + R_2(X_2) - R_9(X_1,Y_2) + R_{10}(X_6) + R_{11}(X_6) \\
X_2' &= R_1(X_1) - R_2(X_2) - R_3(X_2) + R_4(X_3) + R_8(X_5) \\
X_3' &= R_3(X_2) - R_4(X_3) - R_5(X_3) - R_{12}(X_3,Y_2) + R_{13}(X_7) + R_{14}(X_7) \\
X_4' &= -R_6(X_4,Y_1) + R_7(X_5) + R_{11}(X_6) + R_{14}(X_7) \\
X_5' &= R_6(X_4,Y_1) - R_7(X_5) - R_8(X_5) \\
X_6' &= R_9(X_1,Y_2) - R_{10}(X_6) - R_{11}(X_6) \\
X_7' &= R_{12}(X_3,Y_2) - R_{13}(X_7) - R_{14}(X_7).
\end{aligned} \right.
\]

\begin{table}[t]
\centering
\begin{NiceTabular}{ccccccc}[code-before = \rowcolors{2}{white}{gray!12}]
\toprule
\begin{tabular}{c}Reaction\\ label \end{tabular} & & \begin{tabular}{c}Chemical \\ reaction \end{tabular} & & \begin{tabular}{c}$\mathcal{X}$-reduced\\reaction\end{tabular} & & \begin{tabular}{c} ALT-graph\\edge(s)\end{tabular} \\
\midrule
$R_1:$ & & \begin{tikzcd}[ampersand replacement=\&, baseline=(current bounding box.center)] X_1 \arrow[r] \& X_2 \end{tikzcd} & $\Longrightarrow$ & \begin{tikzcd}[ampersand replacement=\&, baseline=(current bounding box.center)] X_1 \arrow[r] \& X_2 \end{tikzcd} & $\Longrightarrow$ & \begin{tikzcd}[ampersand replacement=\&, baseline=(current bounding box.center)] X_1 \arrow[r,"{}_{R_1}"] \& X_2 \end{tikzcd} \\[0pt]
$R_2:$ & & \begin{tikzcd}[ampersand replacement=\&, baseline=(current bounding box.center)] X_2 \arrow[r] \& X_1 \end{tikzcd} & $\Longrightarrow$ & \begin{tikzcd}[ampersand replacement=\&, baseline=(current bounding box.center)] X_2 \arrow[r] \& X_1 \end{tikzcd} & $\Longrightarrow$ & \begin{tikzcd}[ampersand replacement=\&, baseline=(current bounding box.center)] X_2 \arrow[r,"{}_{R_2}"] \& X_1 \end{tikzcd} \\[0pt]
$R_3:$ & & \begin{tikzcd}[ampersand replacement=\&, baseline=(current bounding box.center)] X_2 \arrow[r] \& X_3 \end{tikzcd} & $\Longrightarrow$ & \begin{tikzcd}[ampersand replacement=\&, baseline=(current bounding box.center)] X_2 \arrow[r] \& X_3 \end{tikzcd} & $\Longrightarrow$ & \begin{tikzcd}[ampersand replacement=\&, baseline=(current bounding box.center)] X_2 \arrow[r,"{}_{R_3}"] \& X_3 \end{tikzcd} \\[0pt]
$R_4:$ & & \begin{tikzcd}[ampersand replacement=\&, baseline=(current bounding box.center)] X_3 \arrow[r] \& X_2 \end{tikzcd} & $\Longrightarrow$ & \begin{tikzcd}[ampersand replacement=\&, baseline=(current bounding box.center)] X_3 \arrow[r] \& X_2 \end{tikzcd} & $\Longrightarrow$ & \begin{tikzcd}[ampersand replacement=\&, baseline=(current bounding box.center)] X_3 \arrow[r,"{}_{R_4}"] \& X_2 \end{tikzcd} \\[0pt]
$R_5:$ & & \begin{tikzcd}[ampersand replacement=\&, baseline=(current bounding box.center)] X_3 \arrow[r] \& Y_1 \end{tikzcd} & $\Longrightarrow$ & \begin{tikzcd}[ampersand replacement=\&, baseline=(current bounding box.center)] X_3 \arrow[r] \& 0 \end{tikzcd} & $\Longrightarrow$ & \begin{tikzcd}[ampersand replacement=\&, baseline=(current bounding box.center)] X_3 \arrow[r,"{}_{R_5}"] \& 0 \end{tikzcd} \\[0pt]
$R_6:$ & & \begin{tikzcd}[ampersand replacement=\&, baseline=(current bounding box.center)] Y_1 + X_4 \arrow[r] \& X_5 \end{tikzcd} & $\Longrightarrow$ & \begin{tikzcd}[ampersand replacement=\&, baseline=(current bounding box.center)] X_4 \arrow[r] \& X_5 \end{tikzcd} & $\Longrightarrow$ & \begin{tikzcd}[ampersand replacement=\&, baseline=(current bounding box.center)] X_4 \arrow[r,"{}_{R_6}"] \& X_5 \end{tikzcd} \\[0pt]
$R_7:$ & & \begin{tikzcd}[ampersand replacement=\&, baseline=(current bounding box.center)] X_5 \arrow[r] \& X_4 + Y_1 \end{tikzcd} & $\Longrightarrow$ & \begin{tikzcd}[ampersand replacement=\&, baseline=(current bounding box.center)] X_5 \arrow[r] \& X_4 \end{tikzcd} & $\Longrightarrow$ & \begin{tikzcd}[ampersand replacement=\&, baseline=(current bounding box.center)] X_5 \arrow[r,"{}_{R_7}"] \& X_4 \end{tikzcd} \\[0pt]
$R_8:$ & & \begin{tikzcd}[ampersand replacement=\&, baseline=(current bounding box.center)] X_5 \arrow[r] \& X_2 + Y_2 \end{tikzcd} & $\Longrightarrow$ & \begin{tikzcd}[ampersand replacement=\&, baseline=(current bounding box.center)] X_5 \arrow[r] \& X_2 \end{tikzcd} & $\Longrightarrow$ & \begin{tikzcd}[ampersand replacement=\&, baseline=(current bounding box.center)] X_5 \arrow[r,"{}_{R_8}"] \& X_2 \end{tikzcd} \\[0pt]
$R_9:$ & & \begin{tikzcd}[ampersand replacement=\&, baseline=(current bounding box.center)] X_1 + Y_2 \arrow[r] \& X_6 \end{tikzcd} & $\Longrightarrow$ & \begin{tikzcd}[ampersand replacement=\&, baseline=(current bounding box.center)] X_1 \arrow[r] \& X_6 \end{tikzcd} & $\Longrightarrow$ & \begin{tikzcd}[ampersand replacement=\&, baseline=(current bounding box.center)] X_1 \arrow[r,"{}_{R_9}"] \& X_6 \end{tikzcd} \\[0pt]
$R_{10}:$ & & \begin{tikzcd}[ampersand replacement=\&, baseline=(current bounding box.center)] X_6 \arrow[r] \& X_1 + Y_2 \end{tikzcd} & $\Longrightarrow$ & \begin{tikzcd}[ampersand replacement=\&, baseline=(current bounding box.center)] X_6 \arrow[r] \& X_1 \end{tikzcd} & $\Longrightarrow$ & \begin{tikzcd}[ampersand replacement=\&, baseline=(current bounding box.center)] X_6 \arrow[r,"{}_{R_{10}}"] \& X_1 \end{tikzcd} \\[0pt]
$R_{11}:$ & & \begin{tikzcd}[ampersand replacement=\&, baseline=(current bounding box.center)] X_6 \arrow[r] \& X_1 + X_4 \end{tikzcd} & $\Longrightarrow$ & \begin{tikzcd}[ampersand replacement=\&, baseline=(current bounding box.center)] X_6 \arrow[r] \& X_1 + X_4 \end{tikzcd} & $\Longrightarrow$ & \begin{tikzcd}[ampersand replacement=\&, baseline=(current bounding box.center)] X_1 \& X_6 \arrow[l,"{}_{R_{11}}"'] \arrow[loop below,"{}_{R_{11}}" right] \arrow[r,"{}_{R_{11}}"] \& X_4 \end{tikzcd} \\[0pt]
$R_{12}:$ & & \begin{tikzcd}[ampersand replacement=\&, baseline=(current bounding box.center)] X_3 + Y_2 \arrow[r] \& X_7 \end{tikzcd} & $\Longrightarrow$ & \begin{tikzcd}[ampersand replacement=\&, baseline=(current bounding box.center)] X_3 \arrow[r] \& X_7 \end{tikzcd} & $\Longrightarrow$ & \begin{tikzcd}[ampersand replacement=\&, baseline=(current bounding box.center)] X_3 \arrow[r,"{}_{R_{12}}"] \& X_7 \end{tikzcd} \\[0pt]
$R_{13}:$ & & \begin{tikzcd}[ampersand replacement=\&, baseline=(current bounding box.center)] X_7 \arrow[r] \& X_3 + Y_2 \end{tikzcd} & $\Longrightarrow$ & \begin{tikzcd}[ampersand replacement=\&, baseline=(current bounding box.center)] X_7 \arrow[r] \& X_3 \end{tikzcd} & $\Longrightarrow$ & \begin{tikzcd}[ampersand replacement=\&, baseline=(current bounding box.center)] X_7 \arrow[r,"{}_{R_{13}}"] \& X_3 \end{tikzcd} \\[0pt]
$R_{14}:$ & & \begin{tikzcd}[ampersand replacement=\&, baseline=(current bounding box.center)] X_7 \arrow[r] \& X_3 + X_4 \end{tikzcd} & $\Longrightarrow$ & \begin{tikzcd}[ampersand replacement=\&, baseline=(current bounding box.center)] X_7 \arrow[r] \& X_3 + X_4 \end{tikzcd} & $\Longrightarrow$ & \begin{tikzcd}[ampersand replacement=\&, baseline=(current bounding box.center)] X_3 \& X_7 \arrow[l,"{}_{R_{14}}"'] \arrow[loop below,"{}_{R_{14}}" right] \arrow[r,"{}_{R_{14}}"] \& X_4 \end{tikzcd} \\[0pt]
\bottomrule
\end{NiceTabular}
\caption{Shown above are the steps which take each reaction (left) from the chemical reaction network \eqref{envzompr-reindex} to the corresponding ALT-graph edges (right). First, we construct the $\mathcal{X}$-reduced edge (middle) by removing the species not in the set $\mathcal{X} = \{ X_1, \ldots, X_7 \}$. We then construct the ALT-graph edges (right) by noting that the $R_5$ is a leak edge, $R_{11}$ and $R_{14}$ are catalytic transition (type III) edges, and the rest are transition edges. We apply the corresponding rules given in Table \ref{tab:jacobian}. Combining the ALT-graph edges (right) to a single graph on the vertices $\mathcal{X} \cup \{ 0 \}$ gives the ALT-graph \eqref{eq:ALT-envzompr}.}
\label{tab:jacobian2}
\end{table}

The ALT-graph of \eqref{envzompr-reindex} is given by the following, where the assignment of the individual transitions is contained in Table \ref{tab:jacobian2}:
\begin{equation}
\label{eq:ALT-envzompr}
\begin{tikzcd}[ampersand replacement=\&,execute at end picture={
        \draw[black, thin] 
            ([shift={(-5pt,-5pt)}]current bounding box.south west) 
            rectangle 
            ([shift={(5pt,5pt)}]current bounding box.north east);                  
            \node[above] at ([shift={(0,5pt)}]current bounding box.north) {$\mathcal{G} = (\mathcal{V}, \mathcal{E}, \Omega, \phi)$};
    }
]
X_1 \arrow[rr,yshift=0.5ex,"R_1"] \arrow[dd,xshift=-0.5ex,"R_9"'] \& \& X_2 \arrow[ll,yshift=-0.5ex,"R_2"] \arrow[rr,yshift=0.5ex,"R_3"] \& \& X_3 \arrow[ll,yshift=-0.5ex,"R_4"] \arrow[rr,"R_5"] \arrow[dd,xshift=-0.5ex,"R_{12}"'] \& \& 0 \\
\\
 X_6 \arrow[loop left,"R_{11}"] \arrow[uu,xshift=0.5ex,"{\{R_{10},R_{11}\}}"'] \arrow[ddrr,bend right=15,"R_{11}"'] \& \& X_5 \arrow[uu,"R_8"'] \arrow[dd,xshift=-0.5ex,"R_7"'] \& \& X_7 \arrow[loop right,"R_{14}"] \arrow[uu,xshift=0.5ex,"{\{R_{13},R_{14}\}}"'] \arrow[ddll,bend left=15,"R_{14}"] \& \& \\
\\
\& \& X_4 \arrow[uu,xshift=0.5ex,"R_{6}"'] \& \& \& \& 
\end{tikzcd}
\end{equation}
The autocatalytic reaction set is $\mathcal{R}_A = \{ R_{11}, R_{14}\}$ and we correspondingly have autocatalytic instances $R_{11} \in \phi(e_{66})$ and $R_{14} \in \phi(e_{77})$, and catalytic instances $R_{11} \in \phi(e_{61})$, $R_{11} \in \phi(e_{64})$, $R_{14} \in \phi(e_{73})$, and $R_{14} \in \phi(e_{74})$. The strongly connected components are given by $\mathcal{Z}_1 = \{ X_1, X_2, X_3, X_4, X_5, X_6, X_7 \}$ and $\mathcal{Z}_2 = \{ 0 \}$. Since all catalytic instances of $R_{11}$ and $R_{14}$ lie within $\mathcal{Z}_1$, we have that all autocatalytic reactions are critical so that $\mathcal{R}_C = \{ R_{11}, R_{14} \}$. Since no instances of the autocatalytic reactions $\mathcal{R}_A = \{ R_{11}, R_{14} \}$ leave $\mathcal{Z}_1$, we have no critical strongly connected components so that $\mathcal{Z}_C = \emptyset$. Consequently, we need to perform autocatalytic/catalytic splittings according to Algorithm \ref{alg:1} until $\mathcal{R}_C = \emptyset$.

Applying Algorithm \ref{alg:1}, we initialize $\mathcal{G}_F$ with $\mathcal{E}_F = \emptyset$ and $\mathcal{G}_{-V} = \mathcal{G}$ with $\mathcal{E}_{-V} = \mathcal{E}$. We notice that the vertex $X_4$ is the target of catalytic instances $R_{11} \in \phi(e_{64})$ and $R_{14} \in \phi(e_{74})$ so that we apply lines 16--17 of Algorithm \ref{alg:1}. Performing a catalytic splitting on the catalytic instance $R_{11} \in \phi(e_{64})$ gives the following decomposition $\mathcal{G} = \mathcal{G}_{F_1} \oplus \mathcal{G}_{-V_1}$:
\[ \scriptsize
\begin{tikzcd}[ampersand replacement=\&,execute at end picture={
        \draw[black, thin] 
            ([shift={(-5pt,-5pt)}]current bounding box.south west) 
            rectangle 
            ([shift={(5pt,5pt)}]current bounding box.north east);                  
            \node[above] at ([shift={(0,5pt)}]current bounding box.north) {$\mathcal{G}_{F_1}$};
    }
]
X_1 \arrow[rr,white,yshift=0.5ex,"R_1"] \arrow[dd,white,xshift=-0.5ex,"R_9"'] \& \& X_2 \arrow[ll,white,yshift=-0.5ex,"R_2"] \arrow[rr,white,yshift=0.5ex,"R_3"] \& \& X_3 \arrow[ll,white,yshift=-0.5ex,"R_4"] \arrow[rr,white,"R_5"] \arrow[dd,white,xshift=-0.5ex,"R_{12}"'] \& \& 0 \\
\\
 X_6 \arrow[loop left,"R_{11}"] \arrow[ddrr,bend right=15,"R_{11}"'] \& \& X_5 \& \& X_7 \arrow[loop right,white,"R_{14}"] \arrow[ddll,white,bend left=15,"R_{14}"] \& \& \\
\\
\& \& X_4 \& \& \& \& 
\end{tikzcd} \; \oplus \;
\begin{tikzcd}[ampersand replacement=\&,execute at end picture={
        \draw[black, thin] 
            ([shift={(-5pt,-5pt)}]current bounding box.south west) 
            rectangle 
            ([shift={(5pt,5pt)}]current bounding box.north east);                  
            \node[above] at ([shift={(0,5pt)}]current bounding box.north) {$\mathcal{G}_{-V_1}$};
    }
]
X_1 \arrow[rr,yshift=0.5ex,"R_1"] \arrow[dd,xshift=-0.5ex,"R_9"'] \& \& X_2 \arrow[ll,yshift=-0.5ex,"R_2"] \arrow[rr,yshift=0.5ex,"R_3"] \& \& X_3 \arrow[ll,yshift=-0.5ex,"R_4"] \arrow[rr,"R_5"] \arrow[dd,xshift=-0.5ex,"R_{12}"'] \& \& 0 \\
\\
 X_6 \arrow[loop left,white,"R_{11}"] \arrow[uu,xshift=0.5ex,"{\{R_{10},R_{11}\}}"'] \& \& X_5 \arrow[uu,"R_8"'] \arrow[dd,xshift=-0.5ex,"R_7"'] \& \& X_7 \arrow[loop right,"R_{14}"] \arrow[uu,xshift=0.5ex,"{\{R_{13},R_{14}\}}"']  \arrow[ddll,bend left=15,"R_{14}"] \& \& \\
\\
\& \& X_4 \arrow[uu,xshift=0.5ex,"R_{6}"'] \& \& \& \& 
\end{tikzcd}
\]

We now recompute the sets $\mathcal{R}_A$, $\mathcal{R}_C$, and $\mathcal{Z}_C$ on the graph $\mathcal{G}_{-V_1}$. We notice that $R_{11}$ is not autocatalytic in $\mathcal{G}_{-V_1}$ but $R_{14}$ is, so that $\mathcal{R}_A = \{ R_{14} \}$. As before, we have the strongly connected components $\mathcal{Z}_1 = \{ X_1, X_2, X_3, X_4, X_5, X_6, X_7\}$ and $\mathcal{Z}_2 = \{ 0 \}$. The catalytic instances of $R_{14}$ lie within $\mathcal{Z}_1$ so that $R_{14}$ is critical so we have $\mathcal{R}_C = \{ R_{14} \}$. No catalytic instances leave $\mathcal{Z}_1$ so that $\mathcal{Z}_C = \emptyset$. Since $\mathcal{R}_C \not= \emptyset$, Algorithm \ref{alg:1} continues. We notice that the vertex $X_4$ is the target of catalytic instances $R_{74} \in \phi_{-V_1}(e_{64})$ and the instance $R_{14} \in \phi_{F_1}(e_{14})$ so that we apply lines 12--13 of Algorithm \ref{alg:1}. Performing a catalytic splitting on the catalytic instance $R_{14} \in \phi_{-V_1}(e_{74})$ gives the following decomposition $\mathcal{G}_{-V_1} = \mathcal{G}_{F_2} \oplus \mathcal{G}_{-V_2}$:
\[ \scriptsize
\begin{tikzcd}[ampersand replacement=\&,execute at end picture={
        \draw[black, thin] 
            ([shift={(-5pt,-5pt)}]current bounding box.south west) 
            rectangle 
            ([shift={(5pt,5pt)}]current bounding box.north east);                  
            \node[above] at ([shift={(0,5pt)}]current bounding box.north) {$\mathcal{G}_{F_2}$};
    }
]
X_1 \arrow[rr,white,yshift=0.5ex,"R_1"] \arrow[dd,white,xshift=-0.5ex,"R_9"'] \& \& X_2 \arrow[ll,white,yshift=-0.5ex,"R_2"] \arrow[rr,white,yshift=0.5ex,"R_3"] \& \& X_3 \arrow[ll,white,yshift=-0.5ex,"R_4"] \arrow[rr,white,"R_5"] \arrow[dd,white,xshift=-0.5ex,"R_{12}"'] \& \& 0 \\
\\
 X_6 \arrow[loop left,white,"R_{11}"] \arrow[ddrr,bend right=15,white,"R_{11}"'] \& \& X_5 \& \& X_7 \arrow[loop right,"R_{14}"] \arrow[ddll,bend left=15,"R_{14}"] \& \& \\
\\
\& \& X_4 \& \& \& \& 
\end{tikzcd} \; \oplus \;
\begin{tikzcd}[ampersand replacement=\&,execute at end picture={
        \draw[black, thin] 
            ([shift={(-5pt,-5pt)}]current bounding box.south west) 
            rectangle 
            ([shift={(5pt,5pt)}]current bounding box.north east);                  
            \node[above] at ([shift={(0,5pt)}]current bounding box.north) {$\mathcal{G}_{-V_2}$};
    }
]
X_1 \arrow[rr,yshift=0.5ex,"R_1"] \arrow[dd,xshift=-0.5ex,"R_9"'] \& \& X_2 \arrow[ll,yshift=-0.5ex,"R_2"] \arrow[rr,yshift=0.5ex,"R_3"] \& \& X_3 \arrow[ll,yshift=-0.5ex,"R_4"] \arrow[rr,"R_5"] \arrow[dd,xshift=-0.5ex,"R_{12}"'] \& \& 0 \\
\\
 X_6 \arrow[loop left,white,"R_{11}"] \arrow[uu,xshift=0.5ex,"{\{R_{10},R_{11}\}}"'] \& \& X_5 \arrow[uu,"R_8"'] \arrow[dd,xshift=-0.5ex,"R_7"'] \& \& X_7 \arrow[loop right,white,"R_{14}"] \arrow[uu,xshift=0.5ex,"{\{R_{13},R_{14}\}}"']  \arrow[ddll,white,bend left=15,"R_{14}"] \& \& \\
\\
\& \& X_4 \arrow[uu,xshift=0.5ex,"R_{6}"'] \& \& \& \& 
\end{tikzcd}
\]

For the graph $\mathcal{G}_{-V_2}$ there are no autocatalytic reactions so that $\mathcal{R}_A = \emptyset$, $\mathcal{R}_C = \emptyset$, and $\mathcal{Z}_C = \emptyset$, so Algorithm \ref{alg:1} terminates and we have the autocatalytic decomposition $\mathcal{G} = \mathcal{G}_F \oplus \mathcal{G}_{-V}$. Setting $\mathcal{G}_F = \mathcal{G}_{F_1} \oplus \mathcal{G}_{F_2}$ and $\mathcal{G}_{-V} = \mathcal{G}_{-V_2}$, we have the following autocatalytic decomposition:
\[ \scriptsize
\begin{tikzcd}[ampersand replacement=\&,execute at end picture={
        \draw[black, thin] 
            ([shift={(-5pt,-5pt)}]current bounding box.south west) 
            rectangle 
            ([shift={(5pt,5pt)}]current bounding box.north east);                  
            \node[above] at ([shift={(0,5pt)}]current bounding box.north) {$\mathcal{G}_F$};
    }
]
X_1 \arrow[rr,white,yshift=0.5ex,"R_1"] \arrow[dd,white,xshift=-0.5ex,"R_9"'] \& \& X_2 \& \& X_3 \& \& 0 \\
\\
 X_6 \arrow[loop left,"R_{11}"] \arrow[ddrr,bend right=15,"R_{11}"'] \& \& X_5 \& \& X_7 \arrow[loop right,"R_{14}"] \arrow[ddll,bend left=15,"R_{14}"] \& \& \\
\\
\& \& X_4 \& \& \& \& 
\end{tikzcd} \; \oplus \;
\begin{tikzcd}[ampersand replacement=\&,execute at end picture={
        \draw[black, thin] 
            ([shift={(-5pt,-5pt)}]current bounding box.south west) 
            rectangle 
            ([shift={(5pt,5pt)}]current bounding box.north east);                  
            \node[above] at ([shift={(0,5pt)}]current bounding box.north) {$\mathcal{G}_{-V}$};
    }
]
X_1 \arrow[rr,yshift=0.5ex,"R_1"] \arrow[dd,xshift=-0.5ex,"R_9"'] \& \& X_2 \arrow[ll,yshift=-0.5ex,"R_2"] \arrow[rr,yshift=0.5ex,"R_3"] \& \& X_3 \arrow[ll,yshift=-0.5ex,"R_4"] \arrow[rr,"R_5"] \arrow[dd,xshift=-0.5ex,"R_{12}"'] \& \& 0 \\
\\
 X_6 \arrow[loop left,white,"R_{11}"] \arrow[uu,xshift=0.5ex,"{\{R_{10},R_{11}\}}"'] \& \& X_5 \arrow[uu,"R_8"'] \arrow[dd,xshift=-0.5ex,"R_7"'] \& \& X_7 \arrow[uu,xshift=0.5ex,"{\{R_{13},R_{14}\}}"'] \& \& \\
\\
\& \& X_4 \arrow[uu,xshift=0.5ex,"R_{6}"'] \& \& \& \& 
\end{tikzcd}
\]
The ALT-graphs $\mathcal{G}_F$ and $\mathcal{G}_V$ satisfy the assumptions of Theorem \ref{thm:1} so that we have a valid splitting. The corresponding ALT-matrices are
\[\scriptsize F = \left[ \begin{array}{ccccccc}
\displaystyle 0 & \displaystyle 0 & \displaystyle 0 & \displaystyle 0 & \displaystyle 0 & \displaystyle 0 & \displaystyle 0 \\
\displaystyle 0 & \displaystyle 0 & \displaystyle 0 & \displaystyle 0 & \displaystyle 0 & \displaystyle 0 & \displaystyle 0 \\
\displaystyle 0 & \displaystyle 0 & \displaystyle 0 & \displaystyle 0 & \displaystyle 0 & \displaystyle 0 & \displaystyle 0 \\
\displaystyle 0 & \displaystyle 0 & \displaystyle 0 & \displaystyle 0 & \displaystyle 0 & \displaystyle \partial_6 R_{11} & \partial_7 R_{14} \\
\displaystyle 0 & \displaystyle 0 & \displaystyle 0 & \displaystyle 0 & \displaystyle 0 & \displaystyle 0 & \displaystyle 0 \\
\displaystyle 0 & \displaystyle 0 & \displaystyle 0 & \displaystyle 0 & \displaystyle 0 & \displaystyle 0 & \displaystyle 0 \\
\displaystyle 0 & \displaystyle 0 & \displaystyle 0 & \displaystyle 0 & \displaystyle 0 & \displaystyle 0 & \displaystyle 0
\end{array}\right]\] 
and
\[ \scriptsize
-V = \left[ \begin{array}{ccccccc}
\displaystyle -\partial_1 (R_1 + R_9) & \displaystyle \partial_2 R_2 & \displaystyle 0 & \displaystyle 0 & \displaystyle 0 & \displaystyle \partial_6 (R_{10} + R_{11}) & \displaystyle 0 \\ 
\displaystyle \partial_1 R_1 & \displaystyle -\partial_2 (R_2 + R_3) & \displaystyle \partial_3 R_4 & \displaystyle 0 & \displaystyle \partial_5 R_8 & \displaystyle 0 & \displaystyle 0 \\ 
\displaystyle 0 & \displaystyle \partial_2 R_3 & \displaystyle -\partial_3 (R_4 + R_5 + R_{12}) & \displaystyle 0 & \displaystyle 0 & \displaystyle 0 & \displaystyle \partial_7 (R_{13} + R_{14}) \\ 
\displaystyle 0 & \displaystyle 0 & \displaystyle 0 & \displaystyle -\partial_4 R_6 & \displaystyle \partial_5 R_7 & \displaystyle 0 & \displaystyle 0 \\ 
\displaystyle 0 & \displaystyle 0 & \displaystyle 0 & \displaystyle \partial_4 R_6 & \displaystyle -\partial_5 (R_7 + R_8) & \displaystyle 0 & \displaystyle 0 \\ 
\displaystyle \partial_1 R_9 & \displaystyle 0 & \displaystyle 0 & \displaystyle 0 & \displaystyle 0 & \displaystyle -\partial_6 (R_{10} + R_{11}) & \displaystyle 0 \\ 
\displaystyle 0 & \displaystyle 0 & \displaystyle \partial_3 R_{12} & \displaystyle 0 & \displaystyle 0 & \displaystyle 0 & \displaystyle -\partial_7 (R_{13} + R_{14})
\end{array} \right].
\]
We notice that $F$ has rank 1 so that $FV^{-1}$ also has rank 1. The reproduction number is simply the $(4,4)$-entry of $FV^{-1}$, which simplifies to:
{\tiny
\[
\rho(FV^{-1}) = \left( \frac{
\left(
\left(
\partial_2 R_2\partial_1 R_9(\partial_3 R_4+\partial_3 R_5)
+
\partial_1 R_1\partial_2 R_3\partial_3 R_{12}
\right)
\partial_7 R_{14}
+
\partial_2 R_2\partial_1 R_9\partial_7 R_{13}(\partial_3 R_4+\partial_3 R_5)
\right)
\partial_6 R_{11}
+
\partial_1 R_1\partial_2 R_3\partial_6 R_{10}\partial_3 R_{12}\partial_7 R_{14}
}{
\partial_1 R_1
\partial_2 R_3
\partial_3 R_5
\left(\partial_6 R_{10}+\partial_6 R_{11}\right)
\left(\partial_7 R_{13}+\partial_7 R_{14}\right)
} \right).
\]
}

\end{example}


\begin{example}
\label{ex:twosite2}
We now present full computational details for the dual-site bifunctional enzyme cycles \eqref{eq:twosite2} studied in Example \ref{ex:twosite}. Recall that the network has been studied in the context of absolute concentration robustness in \cite{joshi2024bifunctional,joshi2026bifunctional} and has been studied in the context of boundary stability in the paper \cite{joshi2026bistability}.

\begin{table}[t]
\centering
\begin{NiceTabular}{ccccccc}[code-before = \rowcolors{2}{white}{gray!12}]
\toprule
\begin{tabular}{c}Reaction\\ label \end{tabular} & & \begin{tabular}{c}Chemical \\ reaction \end{tabular} & & \begin{tabular}{c}$\mathcal{X}$-reduced\\reaction\end{tabular} & & \begin{tabular}{c} ALT-graph\\edge(s)\end{tabular} \\
\midrule
$R_1:$ & & \begin{tikzcd}[ampersand replacement=\&, baseline=(current bounding box.center)] X_1 + Y_2 \arrow[r] \& X_3 \end{tikzcd} & $\Longrightarrow$ & \begin{tikzcd}[ampersand replacement=\&, baseline=(current bounding box.center)] X_1 \arrow[r] \& X_3 \end{tikzcd} & $\Longrightarrow$ & \begin{tikzcd}[ampersand replacement=\&, baseline=(current bounding box.center)] X_1 \arrow[r,"{}_{R_1}"] \& X_3 \end{tikzcd} \\[0pt]
$R_2:$ & & \begin{tikzcd}[ampersand replacement=\&, baseline=(current bounding box.center)] X_3 \arrow[r] \& X_1 + Y_2 \end{tikzcd} & $\Longrightarrow$ & \begin{tikzcd}[ampersand replacement=\&, baseline=(current bounding box.center)] X_3 \arrow[r] \& X_1 \end{tikzcd} & $\Longrightarrow$ & \begin{tikzcd}[ampersand replacement=\&, baseline=(current bounding box.center)] X_3 \arrow[r,"{}_{R_2}"] \& X_1 \end{tikzcd} \\[0pt]
$R_3:$ & & \begin{tikzcd}[ampersand replacement=\&, baseline=(current bounding box.center)] X_3 \arrow[r] \& X_2 + Y_2 \end{tikzcd} & $\Longrightarrow$ & \begin{tikzcd}[ampersand replacement=\&, baseline=(current bounding box.center)] X_3 \arrow[r] \& X_2 \end{tikzcd} & $\Longrightarrow$ & \begin{tikzcd}[ampersand replacement=\&, baseline=(current bounding box.center)] X_3 \arrow[r,"{}_{R_3}"] \& X_2 \end{tikzcd} \\[0pt]
$R_4:$ & & \begin{tikzcd}[ampersand replacement=\&, baseline=(current bounding box.center)] X_2 + Y_2 \arrow[r] \& X_4 \end{tikzcd} & $\Longrightarrow$ & \begin{tikzcd}[ampersand replacement=\&, baseline=(current bounding box.center)] X_2 \arrow[r] \& X_4 \end{tikzcd} & $\Longrightarrow$ & \begin{tikzcd}[ampersand replacement=\&, baseline=(current bounding box.center)] X_2 \arrow[r,"{}_{R_4}"] \& X_4 \end{tikzcd} \\[0pt]
$R_5:$ & & \begin{tikzcd}[ampersand replacement=\&, baseline=(current bounding box.center)] X_4 \arrow[r] \& X_2 + Y_2 \end{tikzcd} & $\Longrightarrow$ & \begin{tikzcd}[ampersand replacement=\&, baseline=(current bounding box.center)] X_4 \arrow[r] \& X_2 \end{tikzcd} & $\Longrightarrow$ & \begin{tikzcd}[ampersand replacement=\&, baseline=(current bounding box.center)] X_4 \arrow[r,"{}_{R_5}"] \& X_2 \end{tikzcd} \\[0pt]
$R_6:$ & & \begin{tikzcd}[ampersand replacement=\&, baseline=(current bounding box.center)] X_4 \arrow[r] \& Y_1 + Y_2 \end{tikzcd} & $\Longrightarrow$ & \begin{tikzcd}[ampersand replacement=\&, baseline=(current bounding box.center)] X_4 \arrow[r] \& 0 \end{tikzcd} & $\Longrightarrow$ & \begin{tikzcd}[ampersand replacement=\&, baseline=(current bounding box.center)] X_4 \arrow[r,"{}_{R_6}"] \& 0 \end{tikzcd} \\[0pt]
$R_7:$ & & \begin{tikzcd}[ampersand replacement=\&, baseline=(current bounding box.center)] \mathcal{E}_1 +  Y_1 \arrow[r] \& X_5 \end{tikzcd} & $\Longrightarrow$ & \begin{tikzcd}[ampersand replacement=\&, baseline=(current bounding box.center)] \mathcal{E}_1 \arrow[r] \& X_5 \end{tikzcd} & $\Longrightarrow$ & \begin{tikzcd}[ampersand replacement=\&, baseline=(current bounding box.center)] \mathcal{E}_1 \arrow[r,"{}_{R_7}"] \& X_5 \end{tikzcd} \\[0pt]
$R_8:$ & & \begin{tikzcd}[ampersand replacement=\&, baseline=(current bounding box.center)] X_5 \arrow[r] \& Y_1 + \mathcal{E}_1 \end{tikzcd} & $\Longrightarrow$ & \begin{tikzcd}[ampersand replacement=\&, baseline=(current bounding box.center)] X_5 \arrow[r] \& \mathcal{E}_1 \end{tikzcd} & $\Longrightarrow$ & \begin{tikzcd}[ampersand replacement=\&, baseline=(current bounding box.center)] X_5 \arrow[r,"{}_{R_8}"] \& \mathcal{E}_1 \end{tikzcd} \\[0pt]
$R_9:$ & & \begin{tikzcd}[ampersand replacement=\&, baseline=(current bounding box.center)] X_5 \arrow[r] \& X_2 + \mathcal{E}_1 \end{tikzcd} & $\Longrightarrow$ & \begin{tikzcd}[ampersand replacement=\&, baseline=(current bounding box.center)]X_5 \arrow[r] \& X_2 + \mathcal{E}_1 \end{tikzcd} & $\Longrightarrow$ & \begin{tikzcd}[ampersand replacement=\&, baseline=(current bounding box.center)] X_2 \& X_5 \arrow[l,"{}_{R_{9}}"'] \arrow[loop below,"{}_{R_{9}}" right] \arrow[r,"{}_{R_{9}}"] \& \mathcal{E}_1 \end{tikzcd} \\[0pt]
$R_{10}:$ & & \begin{tikzcd}[ampersand replacement=\&, baseline=(current bounding box.center)] X_2 + \mathcal{E}_2 \arrow[r] \& X_6 \end{tikzcd} & $\Longrightarrow$ & \begin{tikzcd}[ampersand replacement=\&, baseline=(current bounding box.center)] X_2 + \mathcal{E}_2 \arrow[r] \& X_6 \end{tikzcd} & $\Longrightarrow$ & \begin{tikzcd}[ampersand replacement=\&, baseline=(current bounding box.center)] None \end{tikzcd} \\[0pt]
$R_{11}:$ & & \begin{tikzcd}[ampersand replacement=\&, baseline=(current bounding box.center)] X_6 \arrow[r] \& X_2 + \mathcal{E}_2 \end{tikzcd} & $\Longrightarrow$ & \begin{tikzcd}[ampersand replacement=\&, baseline=(current bounding box.center)] X_6 \arrow[r] \& X_2 + \mathcal{E}_2 \end{tikzcd} & $\Longrightarrow$ & \begin{tikzcd}[ampersand replacement=\&, baseline=(current bounding box.center)] X_2 \& X_6 \arrow[l,"{}_{R_{11}}"'] \arrow[loop below,"{}_{R_{11}}" right] \arrow[r,"{}_{R_{11}}"] \& \mathcal{E}_2 \end{tikzcd} \\[0pt]
$R_{12}:$ & & \begin{tikzcd}[ampersand replacement=\&, baseline=(current bounding box.center)] X_6 \arrow[r] \& X_1 + \mathcal{E}_2 \end{tikzcd} & $\Longrightarrow$ & \begin{tikzcd}[ampersand replacement=\&, baseline=(current bounding box.center)] X_6 \arrow[r] \& X_1 + \mathcal{E}_2 \end{tikzcd} & $\Longrightarrow$ & \begin{tikzcd}[ampersand replacement=\&, baseline=(current bounding box.center)] X_1 \& X_6 \arrow[l,"{}_{R_{12}}"'] \arrow[loop below,"{}_{R_{12}}" right] \arrow[r,"{}_{R_{12}}"] \& \mathcal{E}_2 \end{tikzcd} \\[0pt]
\bottomrule
\end{NiceTabular}
\caption{Shown above are the steps which take each reaction (left) from the chemical reaction network \eqref{eq:twosite2} to the corresponding ALT-graph edges (right). First, we construct the $\mathcal{X}$-reduced edge (middle) by removing the species not in the set $\mathcal{X} = \{ X_1, \ldots, X_6 \}$. We then construct the ALT-graph edges (right) by noting that the $R_6$ is a leak edge, $R_{9}$, $R_{11}$ and $R_{12}$ are catalytic transition (type III) edges, the rest are transition edges. Note that $R_{10}$ is not assigned to an edge in the ALT-graph because it has multiple inputs within the set $\mathcal{X}$. We apply the corresponding rules given in Table \ref{tab:jacobian}. Combining the ALT-graph edges (right) to a single graph on the vertices $\mathcal{X} \cup \{ 0 \}$ gives the ALT-graphs \eqref{alt-twosite} and \eqref{alt-twosite2}.}
\label{tab:jacobian3}
\end{table}

We now focus on the ALT-graph approach to determining a valid Jacobian splitting $J = F - V$. All four mechanisms have the boundary steady state $(S_0, S_1, S_2, E, C_1, C_2, C_3, C_4) = (0,0,S_{tot},E_{tot},0,0,0,0)$ where $S_{tot}$ and $E_{tot}$ are the conserved totals of substrate and enzyme, respectively, satisfying
\[\begin{aligned}
        &S_{tot} = S_0 + S_1 + S_2 + C_1 + C_2 + 2C_3 + 2 C_4, \\
        &E_{tot} = E + C_1 + C_2 + C_3 + C_4. 
\end{aligned}\]
The corresponding siphon is $\mathcal{X} = \{S_0, S_1, C_1, C_2, C_3, C_4\}$ so we relabel $X_1 = S_0, X_2 = S_1, X_3 = C_1, X_4 = C_2, X_5 = C_3, X_6 = C_4, Y_1 = S_2$, and $Y_2 = E$. Reindexing \eqref{eq:twosite2} gives the corresponding system
\begin{equation}
\label{eq:twosite4} \left\{ \; \;
\begin{tikzcd}[ampersand replacement=\&]
            X_1 + Y_2 \arrow[r,yshift=0.5ex,"R_1"] \& X_3 \arrow[l,yshift=-0.5ex,"R_2"] \arrow[r,"R_3"] \& X_2 + Y_2 \&  X_2 + Y_2 \arrow[r,yshift=0.5ex,"R_4"] \& X_4 \arrow[l,yshift=-0.5ex,"R_5"] \arrow[r,"R_6"] \& Y_1 + Y_2 \\
            \mathcal{E}_1 + Y_1\arrow[r,yshift=0.5ex,"R_7"] \& X_5 \arrow[l,yshift=-0.5ex,"R_8"] \arrow[r,"R_9"] \& X_2 + \mathcal{E}_1 \& X_2 + \mathcal{E}_2 \arrow[r,yshift=0.5ex,"R_{10}"] \& X_6 \arrow[l,yshift=-0.5ex,"R_{11}"] \arrow[r,"R_{12}"] \& X_1 + \mathcal{E}_2
\end{tikzcd} \right.
\end{equation}
where $\mathcal{E}_1, \mathcal{E}_2 \in \{ X_3, X_4 \}$. The $\mathcal{X}$-reduced network \eqref{eq:twosite4} for all four mechanisms is identical; however, differences arise in the ALT-graph due to the varying identities of $\mathcal{E}_1$ and $\mathcal{E}_2$. The correspondence of each reaction to its $\mathcal{X}$-reduced reaction and associated ALT-graph edges is given in Table \ref{tab:jacobian3}. The assembled ALT-graphs are as follows:
\begin{equation}
\label{alt-twosite}
\tiny
\begin{tikzcd}[ampersand replacement=\&,execute at end picture={
        \draw[black, thin] 
            ([shift={(-5pt,-5pt)}]current bounding box.south west) 
            rectangle 
            ([shift={(5pt,5pt)}]current bounding box.north east);                  
            \node[above] at ([shift={(0,5pt)}]current bounding box.north) {$(\mathcal{E}_1,\mathcal{E}_2)=(X_3,X_3)$};
    }
]
 \& \& \& \& X_6 \arrow[loop above, "{\{R_{11}, R_{12}\}}"] \arrow[lldd,"{\{R_{11}, R_{12}\}}"'] \arrow[lllldd, bend right=30, "R_{12}"'] \arrow[dd, "R_{11}"]\& \& \& \& \\ \\
 X_1 \arrow[rr,yshift=0.5ex,"R_1"] \& \& X_3 \arrow[ll,yshift=-0.5ex,"R_2"] \arrow[rr,"R_3"] \arrow[rrdd,xshift=0.3ex,yshift=0.3ex,"R_7"] \& \& X_2\arrow[rr,yshift=0.5ex,"R_4"] \& \& X_4 \arrow[ll,yshift=-0.5ex,"R_5"] \arrow[rr,"R_6"] \& \& 0 \\ \\
 \& \& \& \& X_5 \arrow[loop below, "R_9"] \arrow[lluu,xshift=-0.3ex,yshift=-0.3ex, "{\{R_{8}, R_9\}}"]  \arrow[uu, "R_9"']\& \& \& \&
\end{tikzcd}\; \; \; \; \; \;
\begin{tikzcd}[ampersand replacement=\&,execute at end picture={
        \draw[black, thin] 
            ([shift={(-5pt,-5pt)}]current bounding box.south west) 
            rectangle 
            ([shift={(5pt,5pt)}]current bounding box.north east);                  
            \node[above] at ([shift={(0,5pt)}]current bounding box.north) {$(\mathcal{E}_1,\mathcal{E}_2)=(X_3,X_4)$};
    }
]
 \& \& \& \& X_6 \arrow[loop above, "{\{R_{11}, R_{12}\}}"] \arrow[rrdd,"{\{R_{11}, R_{12}\}}"] \arrow[lllldd, bend right=30, "R_{12}"'] \arrow[dd, "R_{11}"]\& \& \& \& \\ \\
 X_1 \arrow[rr,yshift=0.5ex,"R_1"] \& \& X_3 \arrow[ll,yshift=-0.5ex,"R_2"] \arrow[rr,"R_3"] \arrow[rrdd,xshift=0.3ex,yshift=0.3ex,"R_7"] \& \& X_2\arrow[rr,yshift=0.5ex,"R_4"] \& \& X_4 \arrow[ll,yshift=-0.5ex,"R_5"] \arrow[rr,"R_6"] \& \& 0 \\ \\
 \& \& \& \& X_5 \arrow[loop below, "R_9"] \arrow[lluu,xshift=-0.3ex,yshift=-0.3ex, "{\{R_{8}, R_9\}}"] \arrow[uu, "R_9"']\& \& \& \&
\end{tikzcd}
\end{equation}
\begin{equation}
\label{alt-twosite2}
\tiny \begin{tikzcd}[ampersand replacement=\&,execute at end picture={
        \draw[black, thin] 
            ([shift={(-5pt,-5pt)}]current bounding box.south west) 
            rectangle 
            ([shift={(5pt,5pt)}]current bounding box.north east);                  
            \node[above] at ([shift={(0,5pt)}]current bounding box.north) {$(\mathcal{E}_1,\mathcal{E}_2)=(X_4,X_3)$};
    }
]
 \& \& \& \& X_6 \arrow[loop above, "{\{R_{11}, R_{12}\}}"] \arrow[lldd,"{\{R_{11}, R_{12}\}}"'] \arrow[lllldd, bend right=30, "R_{12}"'] \arrow[dd, "R_{11}"]\& \& \& \& \\ \\
 X_1 \arrow[rr,yshift=0.5ex,"R_1"] \& \& X_3 \arrow[ll,yshift=-0.5ex,"R_2"] \arrow[rr,"R_3"]  \& \& X_2\arrow[rr,yshift=0.5ex,"R_4"] \& \& X_4 \arrow[ll,yshift=-0.5ex,"R_5"] \arrow[lldd,xshift=-0.3ex,yshift=0.3ex,"R_7"'] \arrow[rr,"R_6"] \& \& 0 \\ \\
 \& \& \& \& X_5 \arrow[loop below, "R_9"] \arrow[rruu,xshift=0.3ex,yshift=-0.3ex, "{\{R_{8}, R_9\}}"'] \arrow[uu, "R_9"']\& \& \& \&
\end{tikzcd}\; \; \; \; \; \;
\begin{tikzcd}[ampersand replacement=\&,execute at end picture={
        \draw[black, thin] 
            ([shift={(-5pt,-5pt)}]current bounding box.south west) 
            rectangle 
            ([shift={(5pt,5pt)}]current bounding box.north east);                  
            \node[above] at ([shift={(0,5pt)}]current bounding box.north) {$(\mathcal{E}_1,\mathcal{E}_2)=(X_4,X_4)$};
    }
]
 \& \& \& \& X_6 \arrow[loop above, "{\{R_{11}, R_{12}\}}"] \arrow[rrdd,"{\{R_{11}, R_{12}\}}"] \arrow[lllldd, bend right=30, "R_{12}"'] \arrow[dd, "R_{11}"]\& \& \& \& \\ \\
 X_1 \arrow[rr,yshift=0.5ex,"R_1"] \& \& X_3 \arrow[ll,yshift=-0.5ex,"R_2"] \arrow[rr,"R_3"]  \& \& X_2\arrow[rr,yshift=0.5ex,"R_4"] \& \& X_4 \arrow[ll,yshift=-0.5ex,"R_5"] \arrow[lldd,xshift=-0.3ex,yshift=0.3ex,"R_7"'] \arrow[rr,"R_6"] \& \& 0 \\ \\
 \& \& \& \& X_5 \arrow[loop below, "R_9"] \arrow[rruu,xshift=0.3ex,yshift=-0.3ex, "{\{R_{8}, R_9\}}"'] \arrow[uu, "R_9"']\& \& \& \&
\end{tikzcd}
\end{equation}

For all four mechanisms, we have that the autocatalytic reaction set is $\mathcal{R}_A = \{ R_9, R_{11}, R_{12}\}$. It remains to determine the critical autocatalytic reaction set $\mathcal{R}_C$ and the critical strongly connected component set $\mathcal{Z}_C$. We note that the strongly connected components of the ALT-graph varies by mechanisms, with $(\mathcal{E}_1,\mathcal{E}_2)=(X_3,X_3)$ and $(\mathcal{E}_1,\mathcal{E}_2)=(X_3,X_4)$ admitting the strongly connected components $\mathcal{Z}_1 = \{ X_1, X_3, X_5\}$,  $\mathcal{Z}_2 = \{ X_2, X_4\}$, $\mathcal{Z}_3 = \{ X_6 \}$, and $\mathcal{Z}_4 = \{ 0 \}$, while the mechanisms with $(\mathcal{E}_1,\mathcal{E}_2)=(X_4,X_3)$ and $(\mathcal{E}_1,\mathcal{E}_2)=(X_4,X_4)$ admit the strongly connected components $\mathcal{Z}_1 = \{ X_1, X_3\}$,  $\mathcal{Z}_2 = \{ X_2, X_4, X_5\}$, $\mathcal{Z}_3 = \{ X_6 \}$, and $\mathcal{Z}_4 = \{ 0 \}$. We have the following analysis.

\paragraph{Critical autocatalytic reactions $(\mathcal{R}_C)$:} In the mechanisms with $(\mathcal{E}_1,\mathcal{E}_2)=(X_3,X_3)$ and $(\mathcal{E}_1,\mathcal{E}_2)=(X_3,X_4)$, the reaction $R_9 \in \mathcal{R}_A$ has exactly one catalytic instance leaving the strongly connected component $\mathcal{Z}_1$. Consequently, the reaction is not critical in these mechanism. In the mechanisms with $(\mathcal{E}_1,\mathcal{E}_2)=(X_4,X_3)$ and $(\mathcal{E}_1,\mathcal{E}_2)=(X_4,X_4)$, however, $R_9 \in \mathcal{R}_A$ has no catalytic instances leaving the strongly connected component $\mathcal{Z}_2$. Consequently, the reaction is critical in these mechanisms.

In all four mechanisms, all catalytic instances of the autocatalytic reactions $R_{11}, R_{12} \in \mathcal{R}_A$ leave the strongly connected component $\mathcal{Z}_3$ and consequently these reactions are not critical in any mechanism.

It follows that $\mathcal{R}_C = \emptyset$ for the mechanisms with $(\mathcal{E}_1,\mathcal{E}_2)=(X_3,X_3)$ and $(\mathcal{E}_1,\mathcal{E}_2)=(X_3,X_4)$ and $\mathcal{R}_C = \{ R_9 \}$ for the mechanisms with $(\mathcal{E}_1,\mathcal{E}_2)=(X_4,X_3)$ and $(\mathcal{E}_1,\mathcal{E}_2)=(X_4,X_4)$.

\paragraph{Critical strongly connected components $(\mathcal{Z}_C)$:} In the mechanisms with $(\mathcal{E}_1,\mathcal{E}_2)=(X_3,X_3)$ and $(\mathcal{E}_1,\mathcal{E}_2)=(X_3,X_4)$, there is a catalytic instance of $R_9 \in \mathcal{R}_A$ which leaves $\mathcal{Z}_1$. However, in both mechanisms, $\mathcal{Z}_1$ may not be disconnected from $0$ by the removal of this instance because $R_3 \in \phi(e_{32})$ connects $\mathcal{Z}_1$ to $\mathcal{Z}_2$ and $R_3$ is not autocatalytic. Consequently, $\mathcal{Z}_1$ is not a critical strongly connected component in these mechanisms. Notice that no catalytic instance of $R_9 \in \mathcal{R}_A$ leaves the strongly connected component $\mathcal{Z}_2$ in the mechanisms with $(\mathcal{E}_1,\mathcal{E}_2)=(X_4,X_3)$ and $(\mathcal{E}_1,\mathcal{E}_2)=(X_4,X_4)$ so that these components do not have the capacity to be critical in these mechanisms.

Now consider the strongly connected component $\mathcal{Z}_3 = \{ X_6 \}$. In all four mechanisms, two catalytic instances of $R_{11} \in \mathcal{R}_A$ and two catalytic instances of $R_{12} \in \mathcal{R}_A$ leave $\mathcal{Z}_3$. However, removing at most one catalytic instance per reaction is insufficient to disconnect $\mathcal{Z}_3$ from $0$. Consequently, $\mathcal{Z}_3$ is not critical in any of the four mechanisms. 

Consequently, $\mathcal{Z}_C = \emptyset$ for all four mechanisms.

\paragraph{Mechanisms with $(\mathcal{E}_1,\mathcal{E}_2)=(X_3,X_3)$ and $(\mathcal{E}_1,\mathcal{E}_2)=(X_3,X_4)$:} For these mechanisms, we have $\mathcal{R}_C = \emptyset$ and $\mathcal{Z}_C = \emptyset$. It follows from Corollary \ref{cor:2} that the boundary steady state is universally stable. This reaffirms the result determined in \cite{joshi2026bistability}.

\paragraph{Mechanisms with $(\mathcal{E}_1,\mathcal{E}_2)=(X_4,X_3)$ and $(\mathcal{E}_1,\mathcal{E}_2)=(X_4,X_4)$:} For these mechanisms, we have $\mathcal{R}_C = \{ R_9 \}$ and $\mathcal{Z}_C = \emptyset$. In both mechanisms, lines 21--22 of Algorithm \ref{alg:1} dictate that we perform a catalytic splitting by removing either $R_9 \in \phi(e_{52})$ or $R_9 \in \phi(e_{54})$ and the associated autocatalytic instance $R_9 \in \phi(e_{55})$. There is no preference between the two options. Performing the catalytic splitting on $R_9 \in \phi(e_{52})$ yields the following splitting for the $(\mathcal{E}_1,\mathcal{E}_2)=(X_4,X_3)$ mechanism:
\[\tiny \begin{tikzcd}[ampersand replacement=\&,execute at end picture={
        \draw[black, thin] 
            ([shift={(-5pt,-5pt)}]current bounding box.south west) 
            rectangle 
            ([shift={(5pt,5pt)}]current bounding box.north east);                  
            \node[above] at ([shift={(0,5pt)}]current bounding box.north) {$\mathcal{G}_F$};
    }
]
 \& \& \& \& X_6 \arrow[loop above,white, "{\{R_{11}, R_{12}\}}"] \& \& \& \& \\ \\
 X_1 \& \& X_3  \& \& X_2 \& \& X_4 \& \& 0 \\ \\
 \& \& \& \& X_5 \arrow[loop below, "R_9"] \arrow[uu, "R_9"']\& \& \& \&
\end{tikzcd}\; \oplus \;
\begin{tikzcd}[ampersand replacement=\&,execute at end picture={
        \draw[black, thin] 
            ([shift={(-5pt,-5pt)}]current bounding box.south west) 
            rectangle 
            ([shift={(5pt,5pt)}]current bounding box.north east);                  
            \node[above] at ([shift={(0,5pt)}]current bounding box.north) {$\mathcal{G}_{-V}$};
    }
]
 \& \& \& \& X_6 \arrow[loop above, "{\{R_{11}, R_{12}\}}"] \arrow[lldd,"{\{R_{11}, R_{12}\}}"'] \arrow[lllldd, bend right=30, "R_{12}"'] \arrow[dd, "R_{11}"]\& \& \& \& \\ \\
 X_1 \arrow[rr,yshift=0.5ex,"R_1"] \& \& X_3 \arrow[ll,yshift=-0.5ex,"R_2"] \arrow[rr,"R_3"]  \& \& X_2\arrow[rr,yshift=0.5ex,"R_4"] \& \& X_4 \arrow[ll,yshift=-0.5ex,"R_5"] \arrow[lldd,xshift=-0.3ex,yshift=0.3ex,"R_7"'] \arrow[rr,"R_6"] \& \& 0 \\ \\
 \& \& \& \& X_5 \arrow[loop below,white,"R_9"] \arrow[rruu,xshift=0.3ex,yshift=-0.3ex,"{\{R_{8}, R_9\}}"'] \& \& \& \&
\end{tikzcd}
\]
and the following splitting for the $(\mathcal{E}_1,\mathcal{E}_2)=(X_4,X_4)$ mechanism:
\[\tiny \begin{tikzcd}[ampersand replacement=\&,execute at end picture={
        \draw[black, thin] 
            ([shift={(-5pt,-5pt)}]current bounding box.south west) 
            rectangle 
            ([shift={(5pt,5pt)}]current bounding box.north east);                  
            \node[above] at ([shift={(0,5pt)}]current bounding box.north) {$\mathcal{G}_F$};
    }
]
 \& \& \& \& X_6 \arrow[loop above,white, "{\{R_{11}, R_{12}\}}"] \& \& \& \& \\ \\
 X_1 \& \& X_3  \& \& X_2 \& \& X_4 \& \& 0 \\ \\
 \& \& \& \& X_5 \arrow[loop below, "R_9"] \arrow[uu, "R_9"']\& \& \& \&
\end{tikzcd}\; \oplus \;
\begin{tikzcd}[ampersand replacement=\&,execute at end picture={
        \draw[black, thin] 
            ([shift={(-5pt,-5pt)}]current bounding box.south west) 
            rectangle 
            ([shift={(5pt,5pt)}]current bounding box.north east);                  
            \node[above] at ([shift={(0,5pt)}]current bounding box.north) {$\mathcal{G}_{-V}$};
    }
]
 \& \& \& \& X_6 \arrow[loop above, "{\{R_{11}, R_{12}\}}"] \arrow[rrdd,"{\{R_{11}, R_{12}\}}"] \arrow[lllldd, bend right=30, "R_{12}"'] \arrow[dd, "R_{11}"]\& \& \& \& \\ \\
 X_1 \arrow[rr,yshift=0.5ex,"R_1"] \& \& X_3 \arrow[ll,yshift=-0.5ex,"R_2"] \arrow[rr,"R_3"]  \& \& X_2\arrow[rr,yshift=0.5ex,"R_4"] \& \& X_4 \arrow[ll,yshift=-0.5ex,"R_5"] \arrow[lldd,xshift=-0.3ex,yshift=0.3ex,"R_7"'] \arrow[rr,"R_6"] \& \& 0 \\ \\
 \& \& \& \& X_5 \arrow[loop below,white,"R_9"] \arrow[rruu,xshift=0.3ex,yshift=-0.3ex,"{\{R_{8}, R_9\}}"'] \& \& \& \&
\end{tikzcd}
\]
In both cases, we have that $\mathcal{G}_F \not= \emptyset$ and that $\mathcal{G}_{-V}$ does not have any critical autocatalytic reactions or critical strongly connected components. Consequently, the conclusions of Theorem \ref{thm:1} may be applied, and the next-generation matrix may be computed using the associated ALT-matrices $F$ and $-V$ (see \cite{joshi2026bistability} for computational details in the mass-action case). A summary of the analysis is given in Table \ref{tab:mechanisms-summary}.

\begin{table}[t]
\centering
\begin{NiceTabular}{ccccccc}[code-before = {
  \rowcolor{gray!12}{2,3,6,7}}]
\toprule
\textbf{Mechanism} & $\mathcal{E}_1$ & $\mathcal{E}_2$ & \textbf{Strongly Connected Components} & $\mathcal{R}_C$ & $\mathcal{Z}_C$ & \textbf{Stability} \\
\midrule
1 & $X_3$ & $X_3$ & $\mathcal{Z}_1 = \{X_1, X_3, X_5\}$, $\mathcal{Z}_3 = \{X_6\}$ & $\emptyset$ & $\emptyset$ & Universal \\
  &  &  & $\mathcal{Z}_2 = \{X_2, X_4\}$, $\mathcal{Z}_4 = \{0\}$ &  &  &  \\
\midrule
2 & $X_3$ & $X_4$ & $\mathcal{Z}_1 = \{X_1, X_3, X_5\}$, $\mathcal{Z}_3 = \{X_6\}$ & $\emptyset$ & $\emptyset$ & Universal \\
  &  &  & $\mathcal{Z}_2 = \{X_2, X_4\}$, $\mathcal{Z}_4 = \{0\}$ &  &  &  \\
\midrule
3 & $X_4$ & $X_3$ & $\mathcal{Z}_1 = \{X_1, X_3\}$, $\mathcal{Z}_3 = \{X_6\}$ & $\{R_9\}$ & $\emptyset$ & Conditional \\
  &  &  & $\mathcal{Z}_2 = \{X_2, X_4, X_5\}$, $\mathcal{Z}_4 = \{0\}$ &  &  &  \\
\midrule
4 & $X_4$ & $X_4$ & $\mathcal{Z}_1 = \{X_1, X_3\}$, $\mathcal{Z}_3 = \{X_6\}$ & $\{R_9\}$ & $\emptyset$ & Conditional \\
  &  &  & $\mathcal{Z}_2 = \{X_2, X_4, X_5\}$, $\mathcal{Z}_4 = \{0\}$ &  &  &  \\
\bottomrule
\end{NiceTabular}
\caption{Summary of the four bifunctional enzyme mechanisms (corresponding ALT-graphs shown in equations \eqref{alt-twosite} and \eqref{alt-twosite2}). Each row specifies the mechanism parameterization ($\mathcal{E}_1, \mathcal{E}_2$), the resulting strongly connected component partition, the critical autocatalytic reaction set $\mathcal{R}_C$, the critical strongly connected component set $\mathcal{Z}_C$, and the stability classification of the boundary steady state. Mechanisms 1 and 2 (where $\mathcal{E}_1 = X_3$) exhibit universal stability ($\mathcal{R}_C = \emptyset$), while mechanisms 3 and 4 (where $\mathcal{E}_1 = X_4$) have critical autocatalytic reactions and require parameter-dependent stability conditions.}
\label{tab:mechanisms-summary}
\end{table}

\end{example}

\begin{example}
\label{ex:sir2}
Reconsider the standard SIR (Susceptible-Infectious-Removed) network \eqref{sir} and corresponding interaction network \eqref{sir-interaction} studied in Example \ref{ex:sir}. The mass-action system corresponding to \eqref{sir-interaction} coincides with the system of equations generated from \eqref{sir} via standard epidemiological assumptions. Specifically, we have:
\begin{equation}
\label{sir-de} \left\{ \; \; \;
\begin{aligned}
S' & = \Lambda - \frac{\beta}{N}SI + \alpha R - \delta S \\
I' & = \frac{\beta}{N} SI - (\gamma + \delta) I \\
R' & = \gamma I - (\alpha + \delta) R.
\end{aligned} \right.
\end{equation}

\begin{table}[t]
\centering
\begin{NiceTabular}{ccccccc}[code-before = \rowcolors{2}{white}{gray!12}]
\toprule
\begin{tabular}{c}Reaction\\ label \end{tabular} & & \begin{tabular}{c}Chemical \\ reaction \end{tabular} & & \begin{tabular}{c}$\mathcal{X}$-reduced\\reaction\end{tabular} & & \begin{tabular}{c} ALT-graph\\edge(s)\end{tabular} \\
\midrule
$R_1:$ & & \begin{tikzcd}[ampersand replacement=\&, baseline=(current bounding box.center)] 0 \arrow[r] \& Y_1 \end{tikzcd} & $\Longrightarrow$ & \begin{tikzcd}[ampersand replacement=\&, baseline=(current bounding box.center)] 0 \arrow[r] \& 0 \end{tikzcd} & $\Longrightarrow$ & None \\[0pt]
$R_2:$ & & \begin{tikzcd}[ampersand replacement=\&, baseline=(current bounding box.center)] Y_1+X_1 \arrow[r] \& 2X_1 \end{tikzcd} & $\Longrightarrow$ & \begin{tikzcd}[ampersand replacement=\&, baseline=(current bounding box.center)] X_1 \arrow[r] \& 2X_1 \end{tikzcd} & $\Longrightarrow$ & $\begin{tikzcd}[ampersand replacement=\&, baseline=(current bounding box.center)] X_1 \arrow[loop left,"R_2"] \end{tikzcd}$  \\[0pt]
$R_3:$ & & \begin{tikzcd}[ampersand replacement=\&, baseline=(current bounding box.center)] X_1 \arrow[r] \& X_2 \end{tikzcd} & $\Longrightarrow$ & \begin{tikzcd}[ampersand replacement=\&, baseline=(current bounding box.center)] X_1 \arrow[r] \& X_2 \end{tikzcd} & $\Longrightarrow$ & \begin{tikzcd}[ampersand replacement=\&, baseline=(current bounding box.center)] X_1 \arrow[r,"{}_{R_3}"] \& X_2 \end{tikzcd} \\[0pt]
$R_4:$ & & \begin{tikzcd}[ampersand replacement=\&, baseline=(current bounding box.center)]  X_2 \arrow[r] \& Y_1 \end{tikzcd} & $\Longrightarrow$ & \begin{tikzcd}[ampersand replacement=\&, baseline=(current bounding box.center)] X_2 \arrow[r] \& 0 \end{tikzcd} & $\Longrightarrow$ & \begin{tikzcd}[ampersand replacement=\&, baseline=(current bounding box.center)] X_2 \arrow[r,"{}_{R_4}"] \& 0 \end{tikzcd} \\[0pt]
$R_5:$ & & \begin{tikzcd}[ampersand replacement=\&, baseline=(current bounding box.center)]  Y_1 \arrow[r] \& 0 \end{tikzcd} & $\Longrightarrow$ & \begin{tikzcd}[ampersand replacement=\&, baseline=(current bounding box.center)] 0 \arrow[r] \& 0 \end{tikzcd} & $\Longrightarrow$ & None \\[0pt]
$R_6:$ & & \begin{tikzcd}[ampersand replacement=\&, baseline=(current bounding box.center)]  X_1 \arrow[r] \& 0 \end{tikzcd} & $\Longrightarrow$ & \begin{tikzcd}[ampersand replacement=\&, baseline=(current bounding box.center)] X_1 \arrow[r] \& 0 \end{tikzcd} & $\Longrightarrow$ & \begin{tikzcd}[ampersand replacement=\&, baseline=(current bounding box.center)] X_1 \arrow[r,"{}_{R_6}"] \& 0 \end{tikzcd} \\[0pt]
$R_7:$ & & \begin{tikzcd}[ampersand replacement=\&, baseline=(current bounding box.center)]  X_2 \arrow[r] \& 0 \end{tikzcd} & $\Longrightarrow$ & \begin{tikzcd}[ampersand replacement=\&, baseline=(current bounding box.center)] X_2 \arrow[r] \& 0 \end{tikzcd} & $\Longrightarrow$ & \begin{tikzcd}[ampersand replacement=\&, baseline=(current bounding box.center)] X_2 \arrow[r,"{}_{R_7}"] \& 0 \end{tikzcd} \\[0pt]
\bottomrule
\end{NiceTabular}
\caption{Reaction-by-reaction construction of the ALT-graph for the SIR model \eqref{sir}. The siphon $\mathcal{X}=\{I,R\}$ is first identified (Definition \ref{def:siphon}), and the species are reindexed as $X_1=I$, $X_2=R$, and $Y_1=S$ as described in Section \ref{sec:siphon}. The reactions are then reduced by removing the anti-siphon species $Y_1$ (Definition \ref{def:reduced-graph}), after which the resulting reduced reactions are mapped to ALT-graph edges according to Definition \ref{def:alt-graph} and Table \ref{tab:jacobian}. These edges are finally assembled to obtain the ALT-graph on the vertex set $\mathcal{X}=\{X_1,X_2\}\cup \{ 0 \}$, shown in \eqref{sir-alt}. Note that we exclude reactions which reduced to $0 \to 0$ (i.e., $R_1$ and $R_5$) by the requirement that $\alpha'_{ik} \not= \beta'_{ik}$ for at least one $i \in \{ 1, \ldots, m \}$ of Definition \ref{def:reduced-graph}. Note also that we have merged $R_4$ and $R_7$ as the multiset $\{ R_4, R_7 \}$ for the label of $e_{20}$, consistent with the minimal multiset requirement of Definition \ref{def:alt-graph}.}
\label{tab:jacobian4}
\end{table}

The interaction network \eqref{sir-interaction} has the siphon $\mathcal{X} = \{ I, R \}$ corresponding to the boundary steady state $(S,I,R) = (N,0,0)$. We reindex $X_1 = I$, $X_2 = R$, and $Y_1 = S$, and label the reactions $R_1$ through $R_7$. This results in the following reindexed interaction network:
\[
\left\{ \; \;
\begin{tikzcd}
0 \arrow[rr,yshift=0.5ex,"R_1"] & & Y_1 \arrow[ll,yshift=-0.5ex,"R_5"] & X_1 \arrow[rr,"R_6"] & & 0 & Y_1 + X_1 \arrow[rr,"R_2"] & & 2X_1 \\[-0.1in]
X_2 \arrow[rr,"R_7"] & & 0 & X_1 \arrow[rr,"R_3"] & & X_2 & X_2 \arrow[rr,"R_4"] & & Y_1 
\end{tikzcd} \right.
\]

The correspondence of each reaction to its $\mathcal{X}$-reduced reaction and associated ALT-graph edges is given in Table \ref{tab:jacobian4}. The corresponding ALT-graph is as follows:
\begin{equation}
\label{sir-alt}
 \begin{tikzcd}[ampersand replacement=\&,execute at end picture={
         \draw[black, thin] 
             ([shift={(-5pt,-5pt)}]current bounding box.south west) 
             rectangle 
             ([shift={(5pt,5pt)}]current bounding box.north east);                  
             \node[above] at ([shift={(0,5pt)}]current bounding box.north) {$\mathcal{G}$};
     }
 ]
 X_1 \arrow[loop left,"R_2"] \arrow[rr,"R_3"] \arrow[rdd,"R_6"'] \& \& X_2 \arrow[ldd,"{\{R_4,R_7\}}"] \\
 \& \& \\
 \& 0 \&
 \end{tikzcd}
 \end{equation}
We have the autocatalytic reaction set $\mathcal{R}_A = \{R_2\}$ and note that $R_2$ corresponds to new infections. The strongly connected components are $\mathcal{Z}_1 = \{ X_1 \}$, $\mathcal{Z}_2 = \{ X_2 \}$, and $\mathcal{Z}_3 = \{ 0 \}$. Since there are no catalytic instances of $R_2$, it satisfies the conditions for criticality trivially so that $\mathcal{R}_C = \{ R_2 \}$. Furthermore, since there are no catalytic instances of the only autocatalytic reaction $R_2$ and there is a path from $\mathcal{Z}_2$ to $0$ through $R_6$, the connectivity of $\mathcal{Z}_2$ and $0$ cannot be altered by removal of catalytic instances. It follows that there are no critical strongly connected components so that $\mathcal{Z}_C = \emptyset$. By lines 20--21 and 24 of Algorithm \ref{alg:1}, we remove the instance $R_2 \in \phi(e_{11})$ from $\mathcal{G}$ and add it to $\mathcal{G}_F$. This gives the following autocatalytic splitting:
\[
\begin{tikzcd}[ampersand replacement=\&,execute at end picture={
        \draw[black, thin] 
            ([shift={(-5pt,-5pt)}]current bounding box.south west) 
            rectangle 
            ([shift={(5pt,5pt)}]current bounding box.north east);                  
            \node[above] at ([shift={(0,5pt)}]current bounding box.north) {$\mathcal{G}_F$};
    }
]
X_1 \arrow[loop left,"R_2"] \arrow[rr,white,"R_3"] \arrow[rdd,white,"R_6"']\& \& X_2  \\
\& \& \\
\& 0 \&
\end{tikzcd} \; \oplus \;
\begin{tikzcd}[ampersand replacement=\&,execute at end picture={
        \draw[black, thin] 
            ([shift={(-5pt,-5pt)}]current bounding box.south west) 
            rectangle 
            ([shift={(5pt,5pt)}]current bounding box.north east);                  
            \node[above] at ([shift={(0,5pt)}]current bounding box.north) {$\mathcal{G}_{-V}$};
    }
]
X_1 \arrow[rr,"R_3"] \arrow[rdd,"R_6"'] \& \& X_2 \arrow[ldd,"{\{R_4,R_7\}}"] \\
\& \& \\
\& 0 \&
\end{tikzcd}
\]
There are no autocatalytic reactions in $\mathcal{G}_{-V}$ so Algorithm \ref{alg:1} terminates. Theorem \ref{thm:1} guarantees the corresponding Jacobian splitting $J=F-V$ leads to the valid next-generation matrix $FV^{-1}$ from which the basic reproduction number $\mathscr{R}_0 = \rho(FV^{-1})$ may be computed. For general kinetic functions $R_1$ through $R_7$ satisfying \textbf{(K1)}--\textbf{(K5)}, we have that
\[
F = \begin{bmatrix}
\partial_1 R_2 & 0 \\
0 & 0
\end{bmatrix} \mbox{ and }
V = \begin{bmatrix}
\partial_1 (R_3+R_6) & 0 \\
-\partial_1 R_3 & \partial_2 (R_4+R_7) 
\end{bmatrix}
\]
so that
\[
V^{-1}
=
\begin{bmatrix}
\dfrac{1}{\partial_1(R_3+R_6)} & 0\\[8pt]
\dfrac{\partial_1R_3}
{\partial_1(R_3+R_6)\,\partial_2(R_4+R_7)}
&
\dfrac{1}{\partial_2(R_4+R_7)}
\end{bmatrix} \mbox{ and }
FV^{-1}
=
\begin{bmatrix}
\dfrac{\partial_1 R_2}{\partial_1(R_3+R_6)} & 0\\[8pt]
0 & 0
\end{bmatrix}
\]
where $\partial_i R_k:= \frac{\partial R_k(x^*,y^*)}{\partial x_i}$ from \ref{def:alt-matrix}. It follows that the basic reproduction number is $\displaystyle \mathscr{R}_0 = \rho(FV^{-1}) = \dfrac{\partial_1 R_2}{\partial_1(R_3+R_6)}$.

For the specific kinetic functions in \eqref{sir-de}, we have that the respective partial derivatives evaluated at the boundary steady state are $\partial_1 R_2 = \frac{\partial}{\partial I} \left. \left( \frac{\beta}{N} SI \right) \right|_{(S,I,R)=(N,0,0)} = \frac{\beta}{N} N = \beta$, $\partial_1 R_3 = \gamma$, and $\partial_1 R_6 = \delta$ so that the basic reproduction number is:
\[\mathscr{R}_0 = \rho(FV^{-1}) = \dfrac{\beta}{\gamma + \delta}.\]
\end{example}

\begin{example}
\label{ex:vectorborne2}

Reconsider the vector-borne disease spread model \eqref{sir-vector} and corresponding interaction network \eqref{vector} studied in Example \ref{ex:vectorborne}. The mass-action system corresponding to \eqref{vector} coincides with the system of equations generated from \eqref{sir-vector} via standard epidemiological assumptions. Specifically, we have:
\begin{equation}
\label{sir-vector-des}
\left\{ \; \;
\begin{aligned}
S' & = \alpha_s-\delta_s S+\gamma_s I - \beta_s S V \\
I' & = \beta_s S V - (\delta_s + \gamma_s)I\\
M' & = \alpha_m-\delta_m M - \beta_m M I \\
V' & = \beta_m M I - \delta_m V.
\end{aligned}
\right.
\end{equation}

The interaction network \eqref{vector} has the siphon $\mathcal{X} = \{ I, V \}$ corresponding to the boundary steady state $(S,I,M,V) = (\alpha_s/\delta_s,0,\alpha_m/\delta_m,0)$. We reindex $X_1 = I$, $X_2 = V$, $Y_1 = S$, and $Y_2= M$ and label the reactions $R_1$ through $R_9$. This results in the following reindexed interaction network:
\[
\left\{ \; \;
\begin{tikzcd}
X_1 \arrow[rr,"R_7"] & & 0 \arrow[rr,yshift=0.5ex,"R_1"] & & Y_1 \arrow[ll,yshift=-0.5ex,"R_6"] & Y_1 + X_2 \arrow[rr,"R_3"] & & X_1 + X_2 & & Y_2 + X_1 \arrow[ll,"R_4"'] \\[-0.1in]
X_2 \arrow[rr,"R_9"] & & 0 \arrow[rr,yshift=0.5ex,"R_2"] & & Y_2 \arrow[ll,yshift=-0.5ex,"R_8"] & X_1 \arrow[rr,"R_5"] & & Y_1 & & \\[-0.1in]
\end{tikzcd}
\right.
\]

\begin{table}[t]
\centering
\begin{NiceTabular}{ccccccc}[code-before = \rowcolors{2}{white}{gray!12}]
\toprule
\begin{tabular}{c}Reaction\\ label \end{tabular} & & \begin{tabular}{c}Chemical \\ reaction \end{tabular} & & \begin{tabular}{c}$\mathcal{X}$-reduced\\reaction\end{tabular} & & \begin{tabular}{c} ALT-graph\\edge(s)\end{tabular} \\
\midrule
$R_1:$ & & \begin{tikzcd}[ampersand replacement=\&, baseline=(current bounding box.center)] 0 \arrow[r] \& Y_1 \end{tikzcd} & $\Longrightarrow$ & \begin{tikzcd}[ampersand replacement=\&, baseline=(current bounding box.center)] 0 \arrow[r] \& 0 \end{tikzcd} & $\Longrightarrow$ & None \\[0pt]
$R_2:$ & & \begin{tikzcd}[ampersand replacement=\&, baseline=(current bounding box.center)] 0 \arrow[r] \& Y_2 \end{tikzcd} & $\Longrightarrow$ & \begin{tikzcd}[ampersand replacement=\&, baseline=(current bounding box.center)] 0 \arrow[r] \& 0 \end{tikzcd} & $\Longrightarrow$ & None \\[0pt]
$R_3:$ & & \begin{tikzcd}[ampersand replacement=\&, baseline=(current bounding box.center)] Y_1+X_2 \arrow[r] \& X_1+X_2 \end{tikzcd} & $\Longrightarrow$ & \begin{tikzcd}[ampersand replacement=\&, baseline=(current bounding box.center)] X_2 \arrow[r] \& X_1+X_2 \end{tikzcd} & $\Longrightarrow$ & $\begin{tikzcd}[ampersand replacement=\&, baseline=(current bounding box.center)] X_2 \arrow[loop left,"R_3"] \arrow[r,"R_3"] \& X_1 \end{tikzcd}$  \\[0pt]
$R_4:$ & & \begin{tikzcd}[ampersand replacement=\&, baseline=(current bounding box.center)] Y_2+X_1 \arrow[r] \& X_1+X_2 \end{tikzcd} & $\Longrightarrow$ & \begin{tikzcd}[ampersand replacement=\&, baseline=(current bounding box.center)] X_1 \arrow[r] \& X_1+X_2 \end{tikzcd} & $\Longrightarrow$ & $\begin{tikzcd}[ampersand replacement=\&, baseline=(current bounding box.center)] X_1 \arrow[loop left,"R_4"] \arrow[r,"R_4"] \& X_2 \end{tikzcd}$  \\[0pt]
$R_5:$ & & \begin{tikzcd}[ampersand replacement=\&, baseline=(current bounding box.center)] X_1 \arrow[r] \& Y_1 \end{tikzcd} & $\Longrightarrow$ & \begin{tikzcd}[ampersand replacement=\&, baseline=(current bounding box.center)] X_1 \arrow[r] \& 0 \end{tikzcd} & $\Longrightarrow$ & \begin{tikzcd}[ampersand replacement=\&, baseline=(current bounding box.center)] X_1 \arrow[r,"{}_{R_5}"] \& 0 \end{tikzcd} \\[0pt]
$R_6:$ & & \begin{tikzcd}[ampersand replacement=\&, baseline=(current bounding box.center)]  Y_1 \arrow[r] \& 0 \end{tikzcd} & $\Longrightarrow$ & \begin{tikzcd}[ampersand replacement=\&, baseline=(current bounding box.center)] 0 \arrow[r] \& 0 \end{tikzcd} & $\Longrightarrow$ & None \\[0pt]
$R_7:$ & & \begin{tikzcd}[ampersand replacement=\&, baseline=(current bounding box.center)]  X_1 \arrow[r] \& 0 \end{tikzcd} & $\Longrightarrow$ & \begin{tikzcd}[ampersand replacement=\&, baseline=(current bounding box.center)] X_1 \arrow[r] \& 0 \end{tikzcd} & $\Longrightarrow$ & \begin{tikzcd}[ampersand replacement=\&, baseline=(current bounding box.center)] X_1 \arrow[r,"{}_{R_7}"] \& 0 \end{tikzcd} \\[0pt]
$R_8:$ & & \begin{tikzcd}[ampersand replacement=\&, baseline=(current bounding box.center)]  Y_2 \arrow[r] \& 0 \end{tikzcd} & $\Longrightarrow$ & \begin{tikzcd}[ampersand replacement=\&, baseline=(current bounding box.center)] 0 \arrow[r] \& 0 \end{tikzcd} & $\Longrightarrow$ & None \\[0pt]
$R_9:$ & & \begin{tikzcd}[ampersand replacement=\&, baseline=(current bounding box.center)]  X_2 \arrow[r] \& 0 \end{tikzcd} & $\Longrightarrow$ & \begin{tikzcd}[ampersand replacement=\&, baseline=(current bounding box.center)] X_2 \arrow[r] \& 0 \end{tikzcd} & $\Longrightarrow$ & \begin{tikzcd}[ampersand replacement=\&, baseline=(current bounding box.center)] X_2 \arrow[r,"{}_{R_9}"] \& 0 \end{tikzcd} \\[0pt]
\bottomrule
\end{NiceTabular}
\caption{Reaction-by-reaction construction of the ALT-graph for the vector-borne disease spread model \eqref{sir-vector}. The siphon $\mathcal{X}=\{I,V\}$ is first identified (Definition \ref{def:siphon}), and the species are reindexed as $X_1 = I$, $X_2 = V$, $Y_1 = S$, and $Y_2= M$ as described in Section \ref{sec:siphon}. The reactions are then reduced by removing the antisiphon species $Y_1$ and $Y_2$ (Definition \ref{def:reduced-graph}), after which the resulting reduced reactions are mapped to ALT-graph edges according to Definition \ref{def:alt-graph} and Table \ref{tab:jacobian}. These edges are finally assembled to obtain the ALT-graph on the vertex set $\mathcal{X}=\{X_1,X_2\}\cup \{ 0 \}$, shown in \eqref{vector-alt2}. Note that we exclude reactions which reduced to $0 \to 0$ (i.e., $R_1$, $R_2$, $R_6$, and $R_8$) by the requirement that $\alpha'_{ik} \not= \beta'_{ik}$ for at least one $i \in \{ 1, \ldots, m \}$ of Definition \ref{def:reduced-graph}. Note also that we have merged $R_5$ and $R_7$ as the multiset $\{ R_5, R_7 \}$ for the label of $e_{10}$, consistent with the minimal multiset requirement of Definition \ref{def:alt-graph}.}
\label{tab:jacobian5}
\end{table}

The correspondence of each reaction to its $\mathcal{X}$-reduced reaction and associated ALT-graph edges is given in Table \ref{tab:jacobian5}. The corresponding ALT-graph is as follows:
\begin{equation}
\label{vector-alt2}
\begin{tikzcd}[ampersand replacement=\&,execute at end picture={
        \draw[black, thin] 
            ([shift={(-5pt,-5pt)}]current bounding box.south west) 
            rectangle 
            ([shift={(5pt,5pt)}]current bounding box.north east);                  
            \node[above] at ([shift={(0,5pt)}]current bounding box.north) {$\mathcal{G}$};
    }
]
X_1 \arrow[loop left,"R_4"] \arrow[rr,yshift=0.1cm,"R_4"] \arrow[rdd,"{\{R_5,R_7\}}"'] \& \& X_2 \arrow[loop right,"R_3"] \arrow[ll,yshift=-0.1cm,"R_3"] \arrow[ldd,"R_9"] \\
\& \& \\
\& 0 \&
\end{tikzcd}
\end{equation}
We have the autocatalytic reaction set $\mathcal{R}_A = \{ R_3, R_4 \}$, where we note that each reaction corresponds to a cross infection event---specifically, mosquitoes infecting humans ($R_3$) and humans infecting mosquitoes ($R_4$). The strongly connected components are $\mathcal{Z}_1 = \{ X_1, X_2 \}$ and $\mathcal{Z}_2 = \{ 0 \}$. For $R_3 \in \mathcal{R}_C$, we have an autocatalytic instance $R_3 \in \phi(e_{22})$ and a catalytic instance $R_3 \in \phi(e_{21})$ and, for $R_4 \in \mathcal{R}_C$, we have an autocatalytic instance $R_4 \in \phi(e_{11})$ and a catalytic instance $R_4 \in \phi(e_{12})$. Since all catalytic instances connect vertices in $\mathcal{Z}_1$, we have that both autocatalytic reactions are critical by Definition \ref{def:autocatalytic} so that $\mathcal{R}_C = \{ R_3, R_4 \}$. Furthermore, since the instances $R_5 \in \phi(e_{10})$, $R_7 \in \phi(e_{10})$, and $R_9 \in \phi(e_{20})$ connect $\mathcal{Z}_1$ to $0$, no removal of catalytic instances of $R_3 \in \mathcal{R}_A$ or $R_4 \in \mathcal{R}_A$ can disconnect $\mathcal{Z}_1$ from $0$. Consequently, there are no critical strongly connected components by Definition \ref{def:autocatalytic} so that $\mathcal{Z}_C = \emptyset$.

Since $X_1$ is target of two instances of an autocatalytic reaction ($R_3 \in \phi(e_{21})$ and $R_4 \in \phi(e_{11})$) and $X_2$ is the target of two instances of an autocatalytic reaction ($R_3 \in \phi(e_{22})$ and $R_4 \in \phi(e_{12})$), we may apply lines 16--17 of Algorithm \eqref{alg:1} to either $R_3 \in \phi(e_{21})$ or $R_4 \in \phi(e_{12})$. Performing a catalytic splitting on the instance $R_4 \in \phi(e_{12})$ gives the following autocatalytic decomposition $\mathcal{G} = \mathcal{G}_{F} \oplus \mathcal{G}_{-V}$:
\[
\begin{tikzcd}[ampersand replacement=\&,execute at end picture={
        \draw[black, thin] 
            ([shift={(-5pt,-5pt)}]current bounding box.south west) 
            rectangle 
            ([shift={(5pt,5pt)}]current bounding box.north east);                  
            \node[above] at ([shift={(0,5pt)}]current bounding box.north) {$\mathcal{G}_F$};
    }
]
X_1 \arrow[loop left,"R_4"] \arrow[rr,"R_4"]  \& \& X_2  \\
\& \& \\
\& 0 \&
\end{tikzcd} \; \oplus \; \begin{tikzcd}[ampersand replacement=\&,execute at end picture={
        \draw[black, thin] 
            ([shift={(-5pt,-5pt)}]current bounding box.south west) 
            rectangle 
            ([shift={(5pt,5pt)}]current bounding box.north east);                  
            \node[above] at ([shift={(0,5pt)}]current bounding box.north) {$\mathcal{G}_{-V}$};
    }
]
X_1  \arrow[rdd,"{\{R_5,R_7\}}"'] \& \& X_2 \arrow[loop right,"R_3"] \arrow[ll,"R_3"'] \arrow[ldd,"R_9"] \\
\& \& \\
\& 0 \&
\end{tikzcd}
\]
We now recompute $\mathcal{R}_A$, $\mathcal{R}_C$, and $\mathcal{Z}_C$ for $\mathcal{G}_{-V}$. We have a single autocatalytic reaction, $\mathcal{R}_A = \{ R_3 \}$; however, the strongly connected components have changed from $\mathcal{G}$ to $\mathcal{G}_{-V}$ and we have that $\mathcal{Z}_1 = \{X_1\}$, $\mathcal{Z}_2 = \{ X_2 \}$, and $\mathcal{Z}_3 = \{ 0 \}$. Since $R_3 \in \mathcal{R}_A$ has a catalytic instance $R_3 \in \phi_{-V}(e_{21})$ which goes between strongly connected components, we have that $R_3$ is not critical, i.e., $\mathcal{R}_C = \emptyset$. Furthermore, when we remove the lone catalytic instance $R_3 \in \phi(e_{21})$ there is still a path from $\mathcal{Z}_2$ to $0$ through $R_9 \in \phi(e_{20})$. Consequently, by Definition \ref{def:autocatalytic}, there are no critical strongly connected components, i.e., $\mathcal{Z}_C = \emptyset$. Consequently, Algorithm \ref{alg:1} terminates.

By Theorem \ref{thm:1}, it follows that the ALT-matrices $F$ and $-V$ generate a valid next-generation matrix $FV^{-1}$. For general kinetic functions $R_1$ through $R_9$ satisfying \textbf{(K1)}--\textbf{(K5)}, we have that
\[
F = \begin{bmatrix}
0 & 0 \\
\partial_1 R_4 & 0
\end{bmatrix} \mbox{ and }
V = \begin{bmatrix}
\partial_1 (R_5+R_7) & -\partial_2 R_3 \\
0 & \partial_2 R_9 
\end{bmatrix}
\]
so that
\[
V^{-1} =
\begin{bmatrix}
\displaystyle \dfrac{1}{\partial_1 (R_5+R_7)} &
\displaystyle \dfrac{\partial_2 R_3}{\partial_1 (R_5+R_7)\partial_2 R_9} \\
0 &
\displaystyle \dfrac{1}{\partial_2 R_9}
\end{bmatrix} \mbox{ and }
FV^{-1} =
\begin{bmatrix}
0 & 0 \\
\displaystyle \dfrac{\partial_1 R_4}{\partial_1 (R_5+R_7)} &
\displaystyle \dfrac{\partial_2 R_3 \partial_1 R_4}
{\partial_1 (R_5+R_7)\partial_2 R_9}
\end{bmatrix}
\]
It follows that the basic reproduction number is $\displaystyle \mathscr{R}_0 = \rho(FV^{-1}) = \dfrac{\partial_2 R_3\partial_1 R_4}
{\partial_1 (R_5+R_7)\partial_2 R_9}$. 

For the specific kinetic functions in \eqref{sir-vector-des}, we have that the respective partial derivatives evaluated at the boundary steady state are $\partial_2 R_3 = \beta_s\alpha_s/\delta_s$, $\partial_1 R_4 = \beta_m\alpha_m/\delta_m$, $\partial_1 R_5 = \gamma_s$, $\partial_1 R_7 = \delta_s$, and $\partial_2 R_9 = \delta_m$ so that the basic reproduction number is:
\[\displaystyle \mathscr{R}_0 = \rho(FV^{-1}) = \dfrac{\beta_s \beta_m \alpha_s \alpha_m}{(\gamma_s + \delta_s)\delta_s \delta_m^2}.\]

\end{example}

\end{document}